\font\tencmmib=cmmib10 \skewchar\tencmmib '60
\def\lessim{\ \lower4pt\hbox{$
		\buildrel{\displaystyle <}\over\sim$}\ }
\def\gessim{\ \lower4pt\hbox{$\buildrel{\displaystyle >}
		\over\sim$}\ }
\def\la{\langle}
\def\ra{\rangle}
\newcommand{\e}{\mathbb{E}}
\newcommand{\p}{\mathbb{P}}
\newtheorem{lemma}{\bf Lemma}
\newtheorem{definition}{\bf Definition}
\newtheorem{theorem}{\bf Theorem}
\newtheorem{remark}{\bf Remark}
\newtheorem{proposition}{\bf Proposition}
\newenvironment{Proof of lemma}{\noindent{\bf Proof of Lemma}}{\hfill$\Box$\newline}
\newenvironment{Proof of theorem}{\noindent{\bf Proof of Theorem}}{\hfill{\footnotesize${\square}$}\newline}
\newenvironment{Proof of theorems}{\noindent{\bf Proof of Theorems}}{\hfill$\Box$\newline}
\newenvironment{Proof of proposition}{\noindent{\bf Proof of Proposition}}{\hfill$\Box$\newline}
\newenvironment{Proof of propositions}{\noindent{\bf Proof of Propositions}}{\hfill$\Box$\newline}
\newenvironment{Proof of exercise}{\noindent{\it Proof of Exercise:}}{\hfill$\Box$}
\begin{document}
	
		\title{Phase transition in the spiked random tensor with Rademacher prior}
		
		\author{
			Wei-Kuo Chen\thanks{School of Mathematics, University of Minnesota. Email: wkchen@umn.edu}
		}

		\maketitle

		\begin{abstract}		
		We consider the problem of detecting a deformation from a symmetric Gaussian random $p$-tensor $(p\geq 3)$ with a rank-one spike sampled from the Rademacher prior. Recently in Lesieur et al. \cite{LMLKZ+17}, it was proved that there exists a critical threshold $\beta_p$ so that when the signal-to-noise ratio exceeds $\beta_p$, one can distinguish the spiked and unspiked tensors and weakly recover the prior via the minimal mean-square-error method. On the other side, Perry, Wein, and Bandeira \cite{PWB17} proved that there exists a $\beta_p'<\beta_p$ such that any statistical hypothesis test can not distinguish these two tensors, in the sense that their total variation distance asymptotically vanishes, when the signa-to-noise ratio is less than $\beta_p'$. In this work, we show that $\beta_p$ is indeed the critical threshold that strictly separates the distinguishability and indistinguishability between the two tensors under the total variation distance. Our approach is based on a subtle analysis of the high temperature behavior of the pure $p$-spin model with Ising spin, arising initially from the field of spin glasses. In particular, we identify the signal-to-noise criticality $\beta_p$ as the critical temperature, distinguishing the high and low temperature behavior, of the Ising pure $p$-spin mean-field spin glass model.
		% and it is also the critical value for weak recovery of the signal. 
		
		%and to control the total variation distance through a sharp upper bound for the fluctuation of the free energy.
		\end{abstract}
	
		\tableofcontents
		
		\section{Introduction}\label{sec1}

		The problem of detecting a deformation in a symmetric Gaussian random tensor is concerned about whether there exists a statistical hypothesis test that can reliably distinguish the deformation from the noise. In the matrix case, if $W$ is a Gaussian Wigner ensemble and $u$ is a unit vector, the goal is to distinguish the unspiked matrix $W$ and the spiked matrix $W+\beta uu^T$ for a given signal-to-noise ratio $\beta.$ It is well-known that in this case the top eigenvalue of the spiked matrix exhibits the so-called BBP (Baik, Ben Arous, and P\'ech\'e) transition \cite{BBP,BGN11,CDMF09,PS06}. Namely, the top eigenvalue successfully detects the signal if the strength of $\beta$ exceeds the critical threshold $1$, while it fails to provide indicative information if $\beta<1.$ It was further proved in \cite{MRZ17,OMH13,PWBM16} that for any $\beta<1,$ every statistical hypothesis test can not distinguish the spiked and unspiked matrices. 
		
		In recent years, the above phenomena were also studied in the spiked symmetric Gaussian random $p$-tensor model of the form, $W+\beta u^{\otimes p}$. Earlier results were obtained by Montanari and Richard \cite{MR+14} and Montanari, Reichman, and Zeitouni \cite{MRZ17} in the setting of spherical prior, where they showed that there exist $\beta_p^-$ and $\beta_p^+$ with $\beta_p^-<\beta_p^+$ such that if the signal-to-noise ratio exceeds $\beta_p^+$, it is possible to distinguish the spiked and unspiked tensors and weakly recover the signal, but these are impossible if the signal-to-noise ratio is less than $\beta_p^-.$ More recently, Lesieur et al. \cite{LMLKZ+17} studied this detection problem for any prior by means of the minimal mean-square-error (MMSE). They discovered that there exists a critical threshold $\beta_p$ (depending on the prior) such that when the signal-to-noise ratio exceeds the critical value $\beta_p$, one can distinguish the two tensors by the MMSE and weakly recover the signal by the MMSE estimator. In contrast, when $\beta<\beta_p$, MMSE fails to distinguish the two tensors and weakly recovery of the signal is not possible. On the other side, Perry, Wein, and Bandeira \cite{PWB17} studied the detection problem with spherical, Rademacher, and sparse Rademacher priors and they provided an improvement on the bounds in \cite{MRZ17,MR+14}. Moreover, their results showed that in each of these three cases, there exists a $\beta_p'<\beta_p$ such that for any $\beta<\beta_p'$, it is impossible to distinguish the two tensors in the sense that the total variation distance between the spiked and unspiked tensors asymptotically vanishes. As a consequence, every statistical hypothesis test fails to distinguish the two tensors. 
		%Additionally, under the same regime $\beta<\beta_p',$ it was also proved that weak recovery of the signal is not possible as every output from the spiked tensor is uncorrelated with the signal in the limit. 
		The paper \cite{PWB17} then left with a conjecture that indistinguishability between the two tensors should be valid up to the critical threshold $\beta_p$.
		%, suggested by the MMSE method in \cite{LMLKZ+17}.
		%In this regime, they also showed that weak recovery of the signal is impossible. Additionally, they obtained a weak form of recovery for the signal in this regime, where it was proved that the %spiked tensor outputs a vector with positive correlation with the signal. In sharp contrast, if $\beta<\beta_p^-$, the two tensors are indistinguishable that the total variation distance %between these two tensors is asymptotically zero.  Their paper then left with a replica prediction, which states that these two different phenomena, for $\beta<\beta_p^-$ and $\beta>\beta_p^+$, %hold up to a critical threshold that is determined through the {\it replica symmetric ansatz} in the pure $p$-spin mean-field spin glass model.
		
		The aim of this paper is to study the symmetric Gaussian random $p$-tensor ($p\geq 3$) with Rademacher prior as in the setting of \cite{PWB17}. We show that the threshold $\beta_p$, suggested by the MMSE method in \cite{LMLKZ+17}, is indeed the critical value that strictly separates the distinguishability and indistinguishability between the spiked and unspiked tensors under the total variation distance. More precisely, it is established that when the signal-to-noise ratio is less than the critical value $\beta_p,$ the total variation distance between the spiked and unspiked tensors converges to zero. This establishes the aforementioned prediction in \cite{PWB17}. In particular, we identify the critical value $\beta_p$ as the critical temperature, distinguishing the high and low temperature behavior, of the Ising pure $p$-spin mean-field spin glass model. 
		
		%In addition, whenever the signal-to-noise ratio is smaller than $\beta_p$, we also show that weak recovery of the signal is not possible. Namely, for any (random) probability measure produced %by the spiked tensor, its sampling is uncorrelated with the signal in the limit.
		
		Our approach is based on the methodologies originated from the study of mean-field spin glasses, especially those for the Sherrington-Kirkpatrick model and the mixed $p$-spin models, see \cite{Pan,Talbook1,Talbook2}. Roughly speaking, spin glass models are disordered spin systems initially invented by theoretical physicists  in order to explain the strange magnetic behavior of certain alloys, such as CuMn. Mathematically, they are usually formulated as stochastic processes with high complexity and present several crucial features, e.g., quenched disorder and frustration, that are commonly shared in many real world problems, involving combinatorial random optimizations.  Over the past decades, the study of spin glasses has received a lot of attention in both physics and mathematics communities, see \cite{MPV} for physics overview and \cite{Pan,Talbook1,Talbook2} for mathematical development. 
		
		One way to investigate the detection problem in the symmetric Gaussian random tensor is through the total variation distance between the spiked and unspiked tensors. While in the detection problem $\beta$ represents the signal-to-noise ratio, we regard $\beta$ as a (inverse) temperature parameter in the pure $p$-spin model. Notably, under this setting, the ratio of the densities between the two tensors can be computed as the partition function of the pure $p$-spin model with temperature $\beta.$ In \cite{MRZ17,PWB17}, the authors controlled the total variation distance by the second moment of the partition function. Different than their consideration, we relate this distance to the free energy of the pure $p$-spin model with Ising spin, see Lemma \ref{thm7:lem3}. This relation allows us to show that the critical threshold $\beta_p $ can be determined by the critical temperature of the pure $p$-spin model. In bounding the total variation distance, the most critical ingredient is played by a sharp upper bound concerning the fluctuation of the free energy up to the critical temperature for all $p\geq 3$. In the case $p=2$, the pure $p$-spin model is famously known as the Sherrington-Kirkpatrick model and its free energy was shown to possess a Gaussian central limit theorem in the weak limit up to the critical temperature $\beta_p =1$ by Aizenman, Lebowitz, and Ruelle \cite{ALR}. As for even $p\geq 4$, Bovier, Kurkova, and L\"owe \cite{BKL} showed that the same result also holds (with different scaling than that in the Sherrington-Kirkpatrick model), but not up to the critical temperature. Our main contribution is that we obtain a sharp upper bound for the fluctuation of the free energy, which is comparable to the one in  \cite{BKL} and more importantly it is valid up to the critical temperature for all $p\geq 3$ including odd $p.$ This allows us to extract a sharp upper bound for the total variation distance and deduce the desired result. 
		
		Besides the consideration of the detection problem, we also present some new results and arguments for the pure $p$-spin models that are of independent interest in spin glasses. First, we show that if the temperature is below the critical value $\beta_p $, the model presents the high temperature or {\it replica symmetric} solution in the sense that any two independently sampled spin configurations from the Gibbs measure are essentially orthogonal to each other by  providing exponential tail probability and moment controls. While these results can also be established at {\it very} high temperature by some well-known techniques in spin glasses, such as the cavity method, the second moment method, and Latala's argument (see \cite[Chapter 1]{Talbook1}), it is relatively a more challenging task to obtain the same behavior throughout the entire high temperature regime. We show that this is achievable in the pure $p$-spin model (see Theorem \ref{extra:thm2} and \ref{extra:thm3}) and indeed, our method can also be applied to more general situations, the mixed $p$-spin models (see Remark \ref{rmk1}). Next, in terms of technicality, our argument for the above result is based on the Guerra-Talagrand replica symmetry breaking bound for the coupled free energy for two systems. This bound has been playing a critical role in the study of the mixed {\it even} $p$-spin models, see Talagrand \cite{Talbook2}. Its validity for the model involving odd $p$ mixture is however generally unknown as it is unclear whether the error term along Guerra's replica symmetry breaking interpolation possesses a nonnegative sign or not. To tackle this obstacle, we adopt the synchronization property, introduced by Panchenko \cite{PanMultipSP,Panchenko2015}, that the overlap matrix is asymptotically symmetric and positive semi-definite under the Gibbs average, which was established heavily relying on the fact that the Ghirlanda-Guerra identities imply ultrametricity of the overlaps \cite{PanUltra}. This allows us to show that the error term creates a nonnegative sign and ultimately leads to the validity of the Guerra-Talagrand bound in the pure odd $p$-spin model if one restricts the functional order parameters to be of {\it one-step replica symmetry breaking}. Whether this bound is valid for more general functional order parameters remains open.
		
		For other related works on the detection problem of spiked matrices and tensors, we invite the readers to check a variety of low rank matrix estimation problems, including explicit characterizations of mutual information in \cite{BDM+16,KM+09,KXZ+16,LM+16,M+17} and the performance of the approximate message passing (AMP) for MMSE method  and compressed sensing in \cite{BDM+16,BM+11,DMM+09,DMM+10,JM+13}.  More recently, the fluctuation of the likelihood ratio in the spiked Wigner model was studied in \cite{AKJ+17}. In the case of spiked tensors, phase transitions of the mutual information and the MMSE estimator were recently studied for any $p$ and given prior, see \cite{BM+17,LMLKZ+17} for symmetric case and \cite{BMM+17} for non-symmetric case. The performance of the AMP in the spiked tensor was also investigated in \cite{LMLKZ+17}. See also \cite{BDMK+16,BKMMZ+17,BMDK+17,DAM+16,DM+14,MR+16,RP+16}. For the study of Gaussian random $p$-tensor in terms of complexity, see \cite{BMMN+17}.

		This paper is organized as follows. In Section \ref{sec2}, we state our main results on the detection problem. In Section \ref{sec3}, their proofs are presented  and are essentially self-contained for those who wish to learn only the roles of spin glass results in the detection problem. The high temperature results on the pure $p$-spin model are gathered and established in Section \ref{sec4} with great details.

	   \section{Main results}\label{sec2}
	  
	  We begin by setting some standard notation. Let $p\in \mathbb{N}.$ For any $N\geq 1,$ denote by  $\Omega_N:={\otimes}_{j=1}^p\mathbb{R}^N$ the space of all real-valued tensors $Y=(y_{i_1,\ldots,i_p})_{1\leq i_1,\ldots,i_p\leq N}$. For any two tensors $Y$ and $Y'$, their outer product and inner product are defined respectively by
	  \begin{align*}
	  (Y\otimes Y')_{i_1,\ldots,i_p,i_1',\ldots,i_p'}&=y_{i_1,\ldots,i_p}y_{i_1',\ldots,i_p'}
	  \end{align*}
	  and
	  $$\bigl\la Y,Y'\bigr\ra:=\sum_{i_1,\ldots,i_p=1}^Ny_{i_1,\ldots,i_p}y_{i_1,\ldots,i_p}'.$$ For $\tau\in \mathbb{R}^N,$ we define $\tau^{\otimes p}=\tau\otimes\cdots\otimes \tau\in  \Omega_N$ as the $p$-th order power of $\tau.$  For any permutation $\pi \in \mathfrak{S}_p$ and any tensor $Y$, $Y^\pi\in \Omega^N$ is defined as $y_{i_1,\ldots,i_p}^{\pi}=y_{i_{\pi(1)},\ldots,i_{\pi(p)}}$. We say that a tensor $Y$ is symmetric if $Y=Y^{\pi}$ for all $\pi \in \mathfrak{S}_p.$ Let $\Omega_N^s$ be the collection of all symmetric tensors. For any measurable $A\subseteq \mathbb{R}^k$ for some $k\geq 1$, we use $\mathscr{B}(A)$ to stand for the Borel $\sigma$-field on $A.$
	  
	  %For instance, $Y=u^{\otimes p}$ for some $u=(u_1,\ldots,u_p)$ is a symmetric tensor.
	  %where $\mathbb{S}_N$ is the $(N-1)$-dimensional unit sphere in $\mathbb{R}^N.$
	  
	  The symmetric Gaussian random tensor is defined as follows. Denote by $Y\in \Omega_N$ a random tensor with i.i.d. entries $y_{i_1,\ldots,i_p}\thicksim N(0,2/N).$ Define a symmetric random  tensor $W$ by
	  \begin{align*}
	  W=\frac{1}{p!}\sum_{\pi\in \mathfrak{S}_p}Y^\pi.
	  \end{align*}
	  For example, if $p=2$, $W$ is the Gaussian orthogonal ensemble, i.e., $w_{ij}=w_{ji}$ are independent of each other with $w_{ij}\thicksim N(0,1/N)$ for $i<j$ and $w_{ii}\thicksim N(0,2/N).$ Let $S_N:=\{\pm 1/\sqrt{N}\}^N$. Assume that $u$ is sampled uniformly at random from $S_N$ and is independent of $W.$ Set the spiked random tensor as
	  $$
	  T=W+\beta  u^{\otimes p}
	  $$
	  for $\beta \geq 0.$  Let $$d_{TV}(W,T):=\sup_{A\in \mathscr{B}(\Omega_N^s)}|\p(W\in A)-\p(T\in A)|$$ 
	  be the total variation distance between $W$ and $T$. We now make the distinguishability and indistinguishability between $W$ and $T$ precise.
	  \begin{definition}\label{def1} We say that	
	  	\begin{itemize}
	  		\item[$(i)$] $W$ and $T$ are indistinguishable if $\lim_{N\rightarrow\infty}d_{TV}(W,T)=0$.
	  		%\item $W$ and $T$ are weakly distinguishable if $\lim_{N\rightarrow\infty}d_{TV}(W,T)\in (0,1).$
	  		\item[$(ii)$]  $W$ and $T$ are distinguishable if $\lim_{N\rightarrow\infty}d_{TV}(W,T)=1.$
	  	\end{itemize}

	  \end{definition} 
  
     Let $u$ be a realization of the prior. For a given tensor $X\in \Omega_N^s$, consider the detection problem that under the null hypothesis, $X=W$ and under the alternative hypothesis, $X=T.$ Item $(i)$ essentially says that any statistical test can not reliably distinguish these two hypotheses. Item $(ii)$ means there exists a sequences of events that distinguishes these two tensors. Next we define the notion of weak recovery for $u$.
	 
	 \begin{definition}\label{def2}
	 For $\beta>0,$ we say that weak recovery of $u$ is possible if there exists a sequence of random probability measures $\mu_{N}$ on $\Omega_N^s\times \mathscr{B}(S_N)$ and a constant $c>0$ such that
	 \begin{align}\label{def2:eq1}
	 \lim_{N\rightarrow\infty}\p\Bigl(\int_{S_N}|\la u,\tau\ra| \mu_N(T,d\tau)\geq c\Bigr)=1
	 \end{align} 
	 and that weak recovery of $u$ is not possible if for any  random probability measure $\mu_{N}$ on $\Omega_N^s\times \mathscr{B}(S_N)$ and constant $c>0$,
	 	\begin{align}\label{def2:eq2}
	 	\lim_{N\rightarrow\infty}\p\Bigl(\int_{S_N}|\la u,\tau\ra| \mu_N(T,d\tau)\geq c\Bigr)=0.
	 	\end{align} 
	 Here $\mu_N$ is a random probability measure on $\Omega_N^s\times \mathscr{B}(S_N)$ means that $\mu_N$ is a mapping from $\Omega_N^s\times \mathscr{B}(S_N)$ to $[0,1]$ such that $\mu_N(\cdot,A)$ is  $\mathscr{B}(\Omega_N^s)$-measurable for each $A\in \mathscr{B}(S_N)$ and $\mu_N(w,\cdot)$ is a probability measure on $(S_N, \mathscr{B}(S_N))$ for each $w\in \Omega_N^s.$
	 \end{definition}
 
   A few comments are in position. Consider a given realization of signal $u$ and tensor $T$. Equation \eqref{def2:eq1} ensures that there exists some $\tau$ produced though the measure $\mu_{N}(T,d\tau)$ such that $u$ and $\tau$ have a nontrivial overlap. To understand \eqref{def2:eq2}, let $\phi:\Omega_N^s\rightarrow S_N$ be any measurable function. If we consider the random probability measure $\mu_N(w,\tau)$ defined by $$
   \mu_N(w,\tau)=\left\{
   \begin{array}{ll}
   1,&\mbox{if $\tau=\phi(w)$},\\
   0,&\mbox{if $\tau\neq \phi(w)$},
   \end{array}
   \right.
   $$
   then from \eqref{def2:eq2}, for any $c>0,$
   $$
   \lim_{N\rightarrow\infty}\p\bigl(|\la u,\phi(T)\ra|\geq c\bigr)=0.
   $$
   In other words, any vector generated by $T$ is uncorrelated with the signal $u$ and thus, it does not provide indicative information about $u.$ We emphasize that Definitions \ref{def1} and \ref{def2} are not directly related to each other. Nevertheless, we will show that both of them hold up to a critical threshold in our main result below.

	  Now we introduce the pure $p$-spin model. For each $N\geq 1,$ set $\Sigma_N:=\{\pm 1\}^N$. The Hamiltonian of the pure $p$-spin model is defined by
	  \begin{align*}
	  H_N(\sigma)&=\frac{1}{N^{(p-1)/2}}\sum_{1\leq i_1,\ldots,i_p\leq N}g_{i_1,\ldots,i_p}\sigma_{i_1}\cdots\sigma_{i_p}
	  \end{align*}
	  for $\sigma\in\Sigma_N$. Its covariance can be computed as
	  \begin{align*}
	  \e H_N(\sigma^1)H_N(\sigma^2)=N(R_{1,2})^p,
	  \end{align*}
	  where $R_{1,2}$ is the overlap between $\sigma^1,\sigma^2\in\Sigma_N$,
	  \begin{align}\label{op}
	  R_{1,2}&:=\frac{1}{N}\sum_{i=1}^N\sigma_i^1\sigma_i^2.
	  \end{align} 
	  In the terminology of detection problem, $\beta$ is understood as the signal-to-noise ratio. In the pure $p$-spin model, we regard $\beta$ as a (inverse) temperature parameter. For a given   temperature $\beta\geq 0,$ define the free energy by
	  \begin{align}\label{fg}
	  F_N(\beta)&=\frac{1}{N}\log \sum_{\sigma\in\Sigma_N}\frac{1}{2^N}\exp \frac{\beta}{\sqrt{2}} H_N(\sigma).
	  \end{align}
	  If $p=2$, this model is  known as the Sherrington-Kirkpatrick model and it has been intensively studied over the past decades. The readers are referred to check the books \cite{Pan,Talbook1,Talbook2} for recent mathematical advances for the SK model as well as more general models involving a mixture of pure $p$-spin interactions. In particular, it is already known that the thermodynamic limit of $F_N$ ($N\rightarrow\infty$) converges to a nonrandom quantity that can be expressed as the famous Parisi formula, see, e.g., \cite{Pan00,Tal03}. Denote this limit by $F$. A direct application of Jensen's inequality to \eqref{fg} implies that for all $\beta\geq 0,$
	  \begin{align*}
	  %\label{ineq}
	  F(\beta)\leq \frac{\beta^2}{4}.
	  \end{align*}
	  Define the high temperature regime for the pure $p$-spin model as
	  \begin{align*}
	  \mathcal{R}=\Bigl\{\beta> 0: F(\beta)=\frac{\beta^2}{4}\Bigr\}.
	  \end{align*}
	  Set the critical temperature by 
	  \begin{align*}
	  \beta_p&:=\max \mathcal{R}.
	  \end{align*}	 
	  Our main result shows that $\beta_p$ is the critical threshold in the detection problem.
	  
	  \begin{theorem}
	  	\label{thm7} Let $p\geq 3.$ The following statements hold:
	  	\begin{itemize}
	  		\item[$(i)$] If $0<\beta <\beta_p,$ then $W$ and $T$ are indistinguishable and weak recovery of $u$ is impossible.
	  		\item[$(ii)$] If $\beta >\beta_p$, then $W$ and $T$ are distinguishable and weak recovery of $u$ is possible.
	  	\end{itemize}
	  	  \end{theorem}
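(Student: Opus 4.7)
The plan is to reformulate both parts of Theorem \ref{thm7} in terms of the pure $p$-spin free energy $F_N(\beta)$. An elementary Gaussian computation of the density of $T$ relative to $W$, after averaging over the uniform Rademacher prior on $u$, identifies the Radon--Nikodym derivative $L_N := d\p_T/d\p_W$ at the point $W$ via
\[
L_N(W) \;=\; e^{-N\beta^2/4}\sum_{\sigma\in\Sigma_N}2^{-N}\exp\bigl((\beta/\sqrt 2)H_N(\sigma)\bigr),
\]
so that $\log L_N(W) \stackrel{d}{=} N\bigl(F_N(\beta)-\beta^2/4\bigr)$. Since $F(\beta)\leq\beta^2/4$ always, with strict inequality precisely when $\beta>\beta_p$ by the definition of the critical temperature, the dichotomy of Theorem \ref{thm7} is encoded in whether $N(F_N(\beta)-\beta^2/4)$ is tight or diverges to $-\infty$ at rate $N$.

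Part (ii) is the easier direction. For $\beta>\beta_p$, the self-averaging $F_N(\beta)\to F(\beta)<\beta^2/4$ supplied by the Parisi formula gives $L_N(W)\to 0$ in $\p_W$-probability; since $\e_W L_N=1$, a short computation using $\e_W|L_N-1| = 2\e_W(1-L_N)_+$ yields $d_{TV}(W,T)\to 1$, proving distinguishability. For weak recovery, one takes $\mu_N(T,\cdot)$ to be the posterior law of $u$ given $T$ (the Gibbs measure associated to the Hamiltonian $(\beta/\sqrt 2)\la T,\sigma^{\otimes p}\ra/N^{(p-1)/2}$). By the Nishimori identity and the strict gap $F(\beta)<\beta^2/4$, this posterior concentrates on configurations with overlap bounded away from zero with $v=\sqrt N u$, whence \eqref{def2:eq1} holds with some $c>0$.

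For part (i) with $\beta<\beta_p$, the plan is to prove a lemma (to be stated as Lemma \ref{thm7:lem3}) that controls $d_{TV}(W,T)$ by an exponential moment $\e\exp\bigl(tN(F_N(\beta)-\beta^2/4)\bigr)$ for some small $t>0$, and then to import from Section \ref{sec4} a sharp upper bound on this moment, uniform in $N$ and locally uniform in $\beta$ on $(0,\beta_p)$. Together these yield $d_{TV}(W,T)\to 0$. Impossibility of weak recovery is then routine: under $\p_W$ the signal $u$ is independent of $W$ and hence of any $\tau$ drawn from an estimator $\mu_N(W,\cdot)$, so $|\la u,\tau\ra|$ concentrates at zero by a Hoeffding bound on sums of independent $\pm 1$ variables, and indistinguishability transfers the same conclusion to $\p_T$.

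The principal obstacle is the sharp upper bound on $\e\exp\bigl(tN(F_N(\beta)-\beta^2/4)\bigr)$ required in part (i), valid up to the critical temperature $\beta_p$ and for every $p\geq 3$, including odd $p$. For even $p$ the techniques of \cite{BKL}, based on Guerra's replica symmetry breaking interpolation, can in principle be pushed up to the critical temperature. For odd $p$ the difficulty is genuine: the error term along Guerra's interpolation has the sign of $R_{1,2}^{p-1}$, which need not be nonnegative. The plan, following Panchenko's synchronization framework, is to exploit that the limiting overlap array is almost surely symmetric and positive semidefinite (a consequence of the Ghirlanda--Guerra identities together with ultrametricity of the overlaps) to force the error term into a nonnegative sign under the Gibbs average, thereby validating a Guerra--Talagrand upper bound for one-step RSB functional order parameters in the pure odd $p$-spin model. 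Optimising this bound and combining it with exponential tail controls on the overlap supplied by Theorems \ref{extra:thm2} and \ref{extra:thm3} would deliver the required uniform exponential moment estimate and close the proof of part (i).
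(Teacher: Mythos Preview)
Your overall architecture—identify $L_N(W)=e^{N(F_N(\beta)-\beta^2/4)}$ as the likelihood ratio and reduce both parts to the behaviour of $F_N(\beta)-\beta^2/4$—matches the paper. Part~(ii) on distinguishability is essentially correct. However, two steps in part~(i) do not go through as written.

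\textbf{Indistinguishability via an exponential moment.} Controlling $\e\exp\bigl(tN(F_N(\beta)-\beta^2/4)\bigr)$ for small $t>0$ is not the right quantity. For $0<t\le 1$ this equals $\e L_N^t\le(\e L_N)^t=1$ by Jensen and is uninformative; for $t=1$ it is the second moment method, which is exactly what \cite{PWB17} used and which only reaches some $\beta_p'<\beta_p$. The paper instead proves an exact identity (its Lemma~\ref{thm7:lem3})
\[
d_{TV}(W,T)=\int_0^1\p\Bigl(F_N(\beta)<\tfrac{\beta^2}{4}+\tfrac{\log x}{N}\Bigr)dx,
\]
and then bounds the tail probability $\p(|F_N(\beta)-\beta^2/4|\ge r)$ by $K/(r^2N^{p/2+1})$ via a Chebyshev argument fed by the overlap moment estimate $\e\la R_{1,2}^{2k}\ra_\beta\le K/N^k$ (Theorem~\ref{extra:thm3}). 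This is much weaker than any exponential moment, yet suffices. The synchronization and Guerra--Talagrand machinery you describe is used, but its target is the overlap moment bound, not a free-energy exponential moment.

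\textbf{Impossibility of weak recovery.} The transfer step is genuinely wrong. You argue that under $\p_W$ the pair $(u,W)$ has $u\perp W$, so $\int|\la u,\tau\ra|\mu_N(W,d\tau)\to 0$, and then claim indistinguishability of $W$ and $T$ transfers this to $(u,T)$. But the event in \eqref{def2:eq2} depends on the hidden $u$ \emph{jointly} with the observation, and the joint laws of $(u,W)$ and $(u,T)$ are \emph{not} close in total variation: for fixed $u$, $d_{TV}(W,W+\beta u^{\otimes p})\to 1$ since the Gaussian shift has norm of order $\sqrt N$ in the relevant metric. Only the marginals on the tensor are close. The paper handles this with a change-of-measure lemma (Lemma~\ref{add:lem2}): indistinguishability of the marginals lets one replace $\e\zeta_N(u,T)$ by $\e\int\zeta_N(\tau,W)\,\nu_{N,\beta}(W,d\tau)$, where $\nu_{N,\beta}(W,\cdot)$ is the posterior, which coincides in law with the Gibbs measure $G_{N,\beta}$. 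A Cauchy--Schwarz decoupling then bounds $\e\int|\la u,\tau'\ra|^2\mu_N(T,d\tau')$ by $\bigl(\e\la R_{1,2}^2\ra_\beta\bigr)^{1/2}$, which vanishes by Theorem~\ref{extra:thm3}. So the overlap concentration under the Gibbs measure is needed here too, not merely a Hoeffding bound on independent Rademachers.

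For part~(ii), your Nishimori sketch for weak recovery is too vague to assess; the paper instead introduces an interpolating free energy $L_N(x)$ between $F_N(\beta)$ and an auxiliary free energy $AF_N(\beta)$, shows $AF(\beta)\ge\beta^2/4>F(\beta)$, and uses convexity to extract a strictly positive left derivative $D_-L(\beta)$, which by a standard large-deviation argument forces the posterior overlap with $u$ to be bounded below.
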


  	      Our main contribution in Theorem \ref{thm7} is the part on the indistinguishability of $W$ and $T$ in the statement $(i)$. Previous results along this line were established in \cite{PWB17}, where the authors showed that there exists some $\beta_p'<\beta_p$ so that $W$ and $T$ are indistinguishable for any $\beta<\beta_p'$. Theorem~\ref{thm7}$(i)$ here proves that this behavior is indeed valid up to the critical value $\beta_p$. As one will see from Theorem \ref{extra:thm1} below, we give a characterization of the high temperature regime $\mathcal{R}$ and provide one way to simulate $\beta_p$ in terms of an auxiliary function deduced from the optimality of the Parisi formula for the free energy at high temperature. Numerically, it is obtained that
  	      $$
  	      \begin{tabular}{l|c}
  	      $p$&$\beta_p$\\ \hline
  	      $3$&$1.535$\\ \hline
  	      $4$&$1.621$\\ \hline
  	      $5$&$1.647$\\ \hline
  	      $6$&$1.657$
  	      \end{tabular}$$ 
  	      This agrees with the prediction in \cite[Figure 1]{PWB17}. We comments that for Theorem \ref{thm7}$(i)$, a polynomial rate of convergence for the total variation distance can also be obtained, see Remark \ref{rmk0} below. In comparison, we add that Theorem \ref{thm7} is quite different from the BBP transition for $p=2$, see \cite{BBP,MRZ17,OMH13,PWBM16}. In this case, $\beta_2=1$ and it is known that for $\beta>\beta_2$, one can distinguish $W$ and $T$ in the sense of Definition \ref{def1} by using the top eigenvalue. For $0<\beta<\beta_2,$ it presents a weaker sense of distinguishability, $\lim_{N\rightarrow\infty}d_{TV}(W,T)\in (0,1)$, see Remark \ref{rmk-1}.
  	      
  	         As mentioned before, the work \cite{LMLKZ+17} investigated the present detection problem for any given prior. Their results state that one can distinguish $W$ and $T$ by the MMSE method and weakly recover the signal through the MMSE estimator when $\beta>\beta_p$; if $\beta<\beta_p,$ they concluded that weak recovery of the signal is not possible. In other words, their results imply the weak recovery part of item $(i)$ as well as the statement of item $(ii)$.
  	         %\footnote{In \cite{PWB17}, weak recovery of $u$ is defined in a slightly different sense than Definition \ref{def2}. They said that weak recovery of $u$ is impossible if for any measurable %$\phi:\Omega_N^s\rightarrow S_N$ and $c>$ such that $\p\bigl(\la u,\phi(T)\ra^p\geq c\bigr)\rightarrow 0.$} 
  	         Nevertheless, we emphasize that their approach and the way how the critical value $\beta_p$ was discovered are fundamentally different from the argument we present here. As one will see, while Theorem \ref{thm7}$(ii)$ follows directly from a relation (see Lemma \ref{thm7:lem3}) between the total variation distance and the free energies, the delicate part is Theorem \ref{thm7}$(i)$, which is the major component of this paper.

      \medskip
      
  {\noindent \bf Acknowledgements.} The author thanks G. Ben Arous for introducing this project to him and A. Auffinger and A. Jagannath for some fruitful discussions. He is indebted to D. Panchenko for asking good questions that lead to a major improvement on the result of weak recovery of $u$ and several suggestions regarding the presentation of the paper. He also thanks J. Barbier, A. Montanari, and L. Zdeborov\'a for suggesting several references and valuable comments. Research partly supported by NSF DMS-1642207 and Hong Kong Research Grants Council GRF-14302515.

        \section{Proof of Theorem \ref{thm7}}\label{sec3}
        
        \subsection{Total variation distance}
        
        In this subsection, we prepare some lemmas for Theorem \ref{thm7}. Recall that $Y$ is a random tensor with i.i.d. $y_{i_1,\ldots,i_p}\thicksim N(0,2/N)$ and $W$ is the symmetric random tensor generated by $Y.$ Throughout the remainder of the paper, we use $I(S)$ to standard for an indicator function on a set $S.$ We first establish an elementary expression of the total variation distance.
        
        \begin{lemma}\label{thm7:lem1}
        	Let $U,V$ be two $n$-dimensional random vectors with densities $f_U$ and $f_V$ satisfying $f_U(r)\neq 0$ and $f_V(r)\neq 0$ a.e. on $\mathbb{R}^n.$ Then
        	\begin{align*}
        	d_{TV}(U,V)
        	&=\int_0^1 \p\Bigl(\frac{f_U(V)}{f_V(V)}< x\Bigr)dx=\int_0^1\p\Bigl(\frac{f_U(U)}{f_V(U)}>\frac{1}{x}\Bigr)dx.
        	\end{align*}
        \end{lemma}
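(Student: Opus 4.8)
The plan is to reduce everything to the classical density formula for the total variation distance and then rewrite it through the likelihood ratio
$$L(r) := \frac{f_U(r)}{f_V(r)}, \qquad r\in\mathbb{R}^n,$$
which by the standing hypotheses $f_U\neq 0$, $f_V\neq 0$ a.e. is well defined, strictly positive, and finite for a.e.\ $r$. First I would record the standard fact that the supremum in $d_{TV}(U,V)=\sup_A|\p(U\in A)-\p(V\in A)|$ is attained at the density-comparison set: since $\p(U\in A)-\p(V\in A)=\int_A (f_U-f_V)$, this quantity is maximized by taking $A=\{r: f_U(r)>f_V(r)\}$, and the reverse inequality is maximized on its complement. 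Hence
$$d_{TV}(U,V)=\int_{\mathbb{R}^n}\bigl(f_U(r)-f_V(r)\bigr)^+\,dr=\int_{\mathbb{R}^n}\bigl(f_V(r)-f_U(r)\bigr)^+\,dr,$$
the two expressions being equal because $\int f_U=\int f_V=1$ forces $\int(f_U-f_V)^+=\int(f_V-f_U)^+$.

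For the first identity I would factor $f_V$ out of the second integral: since $(f_V-f_U)^+=f_V\,(1-L)^+$ a.e., integration gives $d_{TV}(U,V)=\e\bigl[(1-L(V))^+\bigr]$. Because $L\ge 0$, the variable $(1-L(V))^+$ lies in $[0,1]$, so the layer-cake identity yields, pointwise,
$$(1-L(V))^+=\int_0^1 I\bigl((1-L(V))^+>t\bigr)\,dt=\int_0^1 I\bigl(L(V)<1-t\bigr)\,dt=\int_0^1 I\bigl(L(V)<x\bigr)\,dx,$$
after the substitution $x=1-t$. Taking expectations and interchanging $\e$ with $\int_0^1$ (legitimate by Tonelli, the integrand being nonnegative) gives $d_{TV}(U,V)=\int_0^1\p(L(V)<x)\,dx$, which is the first claimed formula.

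For the second identity I would instead factor $f_U$ out of the first integral: $(f_U-f_V)^+=f_U\,(1-1/L)^+$ a.e., where the hypothesis $f_U\neq 0$ a.e.\ is what guarantees $1/L<\infty$ a.e., so $d_{TV}(U,V)=\e\bigl[(1-1/L(U))^+\bigr]$. Again $(1-1/L(U))^+\in[0,1)$, and the layer-cake representation combined with $(1-1/L(U))^+>t \iff 1/L(U)<1-t \iff L(U)>1/(1-t)$ gives $(1-1/L(U))^+=\int_0^1 I\bigl(L(U)>1/x\bigr)\,dx$ after substituting $x=1-t$. Applying Tonelli once more yields $d_{TV}(U,V)=\int_0^1\p\bigl(L(U)>1/x\bigr)\,dx$, completing the proof.

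I do not expect a genuine obstacle here: the only two points needing care are justifying that the optimal test set is the density-comparison set (a one-line argument as above) and checking that the reciprocal $1/L$ introduces no measurability or infinite-value pathologies — which is precisely the role of the a.e.-nonvanishing assumptions on $f_U$ and $f_V$. Everything else is the layer-cake formula together with Tonelli's theorem.
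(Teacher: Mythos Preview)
Your proof is correct and follows essentially the same route as the paper: both reduce to the density formula $d_{TV}(U,V)=\int(f_V-f_U)^+\,dr$ and then use Fubini/Tonelli (your ``layer-cake'' step) to rewrite $\e[(1-L(V))^+]$ as $\int_0^1\p(L(V)<x)\,dx$. The only cosmetic difference is that the paper obtains the second identity by symmetry---swapping the roles of $U$ and $V$ in the first identity and noting $f_V(U)/f_U(U)<x\iff f_U(U)/f_V(U)>1/x$---whereas you rederive it directly by factoring $f_U$ out of $(f_U-f_V)^+$; both are equally short.
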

        
        \begin{proof}
        	Note that 
        	\begin{align*}
        	d_{TV}(U,V)&=\frac{1}{2}\int_{\mathbb{R}^n} |f_V(r)-f_U(r)|dr=\int_{f_U(r)\leq f_V(r)}\bigl(f_V(r)-f_U(r)\bigr)dr.
        	\end{align*}
        	Using Fubini's theorem and this equation, the first equality follows by
        	\begin{align*}
        	\int_0^1 \p\Bigl(\frac{f_U(V)}{f_V(V)}< x\Bigr)dx&=\int_0^1\int_{\mathbb{R}^n} I\Bigl(\frac{f_U(r)}{f_V(r)}<x\Bigr)f_V(r)drdx\\
        	&=\int_{\mathbb{R}^n} \int_0^1I\Bigl(\frac{f_U(r)}{f_V(r)}<x\Bigr)dxf_V(r)dr\\
        	&=\int_{\mathbb{R}^n} I\Bigl(\frac{f_U(r)}{f_V(r)}\leq 1\Bigr)\Bigl(1-\frac{f_U(r)}{f_V(r)}\Bigr)f_V(r)dr\\
        	&=\int_{f_U(r)\leq f_V(r)}\bigl(f_V(r)-f_U(r)\bigr)dr.
        	\end{align*}
            To obtain the second equality, one simply exchanges the roles of $U,V$.
        \end{proof}

        %\begin{lemma}
        %	\label{thm7:lem2}
        %	For any $\tau\in \mathbb{R}^N$, we have that
        %	\begin{align*}
        %	\bigl\la W,\tau^{\otimes p}\bigr\ra&=\bigl\la Y,\tau^{\otimes p}\bigr\ra.
        %	\end{align*}
        %\end{lemma}
        
        %\begin{proof}
        %	Observe that for any $\tau\in \mathbb{S}_N$,
        %	\begin{align*}
        %	\bigl\la W,\tau^{\otimes p}\bigr\ra
        %	&=\sum_{i_1,\ldots,i_p=1}^Nw_{i_1,\ldots,i_p}\tau_{i_1}\cdots\tau_{i_p}\\
        %	&=\sum_{i_1,\ldots,i_p=1}^N\Bigl(\frac{1}{p!}\sum_{\pi \in \mathfrak{S}_p}y_{i_{\pi(1)},\ldots,i_{\pi(p)}}\Bigr)\tau_{i_1}\cdots\tau_{i_p}\\
        %	&=\frac{1}{p!}\sum_{\pi \in \mathfrak{S}_p}\sum_{i_1,\ldots,i_p=1}^Ny_{i_{\pi(1)},\ldots,i_{\pi(p)}}\tau_{i_{\pi(1)}}\cdots\tau_{i_{\pi(p)}}\\
        %	&=\sum_{i_1,\ldots,i_p=1}^Ny_{i_1,\ldots,i_p}\tau_{i_1}\cdots\tau_{i_p}\\
        %	&=\bigl\la Y,\tau^{\otimes p}\bigr\ra.
        %	\end{align*}
        %\end{proof}
        
        Recall the free energy $F_N(\beta)$ and the Rademacher prior $u$ from Section \ref{sec2}. Define an auxiliary free energy of the pure $p$-spin model with a Curie-Weiss type interaction as
        \begin{align}\label{af}
        AF_{N}(\beta)&=\frac{1}{N}\log\sum_{\sigma\in\Sigma_N}\frac{1}{2^N} \exp \Bigl(\frac{\beta}{\sqrt{2}}H_N(\sigma)+\frac{\beta^2}{2} N\Bigl(\frac{1}{N}\sum_{i=1}^Nh_i\sigma_i\Bigr)^p\Bigr),
        \end{align}
        where $h_i:=\sqrt{N}u_i.$ In Appendix, it will be established that the limit of $AF_N$ converges a.s. to a nonrandom quantity for any $\beta\geq 0$. Denote this limit by $AF(\beta).$ The following lemma relates the total variation distance between $W$ and $T$ to the free energy $F_N$ and the auxiliary free energy $AF_N$.
        
        \begin{lemma}
        	\label{thm7:lem3} For any $\beta \geq 0,$ we have that
        	\begin{align}
        	\begin{split}\label{thm7:lem3:eq1}
        	d_{TV}(W,T)&=\int_0^1\p\Bigl(F_N(\beta )<\frac{\beta ^2}{4}+\frac{\log x}{N}\Bigr)dx
        	\end{split}\\ 
        	\begin{split}
        	\label{thm7:lem3:eq2}
        	&=\int_0^1\p\Bigl(AF_{N}(\beta )>\frac{\beta ^2}{4}-\frac{\log x}{N}\Bigr)dx.
        	\end{split}
        	\end{align}
        \end{lemma}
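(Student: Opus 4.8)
The plan is to apply Lemma \ref{thm7:lem1} with the choice $U = W$ and $V = T$ (both viewed as random vectors indexed by the independent entries of the symmetric tensor), and then to identify the likelihood ratio $f_W/f_T$ explicitly in terms of the pure $p$-spin partition function. First I would recall that, conditionally on the prior $u$, the entries $\{w_{i_1,\ldots,i_p}\}$ that are free (i.e., after accounting for the symmetrization) are jointly Gaussian with an explicit nondegenerate covariance, so both $f_W$ and $f_T$ are everywhere positive and Lemma \ref{thm7:lem1} applies. The spiked tensor $T = W + \beta u^{\otimes p}$ has the same law as $W$ shifted by the deterministic-given-$u$ rank-one tensor, so the density ratio $f_T(X)/f_W(X)$ is a Gaussian likelihood ratio: writing it out, the quadratic terms in $X$ cancel and one is left with $\exp\bigl(\text{linear in }X\bigr)$ times a constant. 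Carefully tracking the normalization $y_{i_1,\ldots,i_p}\sim N(0,2/N)$ and the symmetrization factor $\tfrac1{p!}$, the linear term becomes $\tfrac{\beta}{\sqrt 2}\cdot\tfrac{1}{N^{(p-1)/2}}\sum g_{i_1,\ldots,i_p}\, u_{i_1}\cdots u_{i_p}\sqrt N^{\,p}$ evaluated at the appropriate Gaussian field, and the constant term is precisely $-\tfrac{N\beta^2}{4}$ coming from the squared norm $\|u^{\otimes p}\|^2 = 1$ against the variance $2/N$. The key point is that averaging this ratio over the uniform prior $u\in S_N$ (equivalently over $\sigma = \sqrt N\, u \in \Sigma_N$) reproduces exactly $\exp\bigl(N F_N(\beta) - \tfrac{N\beta^2}{4}\bigr)$ for \eqref{thm7:lem3:eq1}, since $H_N(\sigma)$ has the stated covariance $N R_{1,2}^p$ and the disorder in $H_N$ is measurable with respect to $W$.

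For \eqref{thm7:lem3:eq1}: by Lemma \ref{thm7:lem1} with the first formula,
$d_{TV}(W,T) = \int_0^1 \p\bigl(f_W(T)/f_T(T) < x\bigr)\,dx$.
I would show that, on the event defining the integrand, $f_W(T)/f_T(T)$ equals the reciprocal of the prior-averaged likelihood ratio evaluated along the disorder generated by $T$ — but here a subtlety arises: $f_W(T)/f_T(T)$ involves the Gaussian field coming from $T$, not $W$. The cleanest route is instead to use the \emph{second} formula in Lemma \ref{thm7:lem1}, $d_{TV}(W,T) = \int_0^1 \p\bigl(f_T(W)/f_W(W) > 1/x\bigr)\,dx$, because under $W$ the relevant field is exactly the disorder $(g_{i_1,\ldots,i_p})$ defining $H_N$. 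Then $f_T(W)/f_W(W) = \e_u\bigl[\exp(\tfrac{\beta}{\sqrt2}H_N(\sqrt N u) - \tfrac{N\beta^2}{4})\bigr] = \exp(N F_N(\beta) - \tfrac{N\beta^2}{4})$, and the condition $f_T(W)/f_W(W) > 1/x$ rearranges to $F_N(\beta) > \tfrac{\beta^2}{4} + \tfrac{\log x}{N}$ — wait, the inequality direction: $\exp(N F_N - N\beta^2/4) > 1/x$ iff $N F_N - N\beta^2/4 > -\log x$ iff $F_N(\beta) > \tfrac{\beta^2}{4} - \tfrac{(-\log x)}{N}$, i.e. $F_N(\beta) > \tfrac{\beta^2}{4} + \tfrac{\log x}{N}$ since $\log x \le 0$. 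So $\p(\cdots > 1/x) = \p(F_N(\beta) > \tfrac{\beta^2}{4} + \tfrac{\log x}{N})$. Using $\int_0^1 \p(A_x)\,dx = 1 - \int_0^1 \p(A_x^c)\,dx$ when the events are monotone, or simply substituting and noting $\int_0^1 1\,dx = 1$ is not needed — more directly, I replace $\p(\text{event})$ by $1 - \p(\text{complement})$ only if the stated form requires it; in fact \eqref{thm7:lem3:eq1} is already in the form $\p(F_N(\beta) < \tfrac{\beta^2}{4} + \tfrac{\log x}{N})$, so I should use the \emph{first} formula of Lemma \ref{thm7:lem1} after exchanging $U \leftrightarrow V$, i.e. with $U = T$, $V = W$: then $d_{TV}(W,T) = d_{TV}(T,W) = \int_0^1 \p\bigl(f_T(W)/f_W(W) < x\bigr)\,dx$, and $f_T(W)/f_W(W) < x$ becomes $\exp(NF_N - N\beta^2/4) < x$, i.e. $F_N(\beta) < \tfrac{\beta^2}{4} + \tfrac{\log x}{N}$, giving \eqref{thm7:lem3:eq1} directly.

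For \eqref{thm7:lem3:eq2}: here I would apply Lemma \ref{thm7:lem1} in its original orientation $U = W$, $V = T$, so $d_{TV}(W,T) = \int_0^1 \p\bigl(f_W(T)/f_T(T) < x\bigr)\,dx$, and compute $f_W(T)/f_T(T)$ as the reciprocal likelihood ratio evaluated under $T$. Under $T$, the free disorder is a shifted Gaussian, and the identity $u^{\otimes p}$ appearing in $T$ interacts with the integration variable $\sigma = \sqrt N v$; completing the square produces the extra Curie–Weiss term $\tfrac{\beta^2}{2}N\bigl(\tfrac1N\sum_i h_i v_i\bigr)^p$ with $h_i = \sqrt N u_i$, which is exactly the definition \eqref{af} of $AF_N$. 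Tracking signs, $f_W(T)/f_T(T) = \exp\bigl(-N AF_N(\beta) + \tfrac{N\beta^2}{4}\bigr)$, so $f_W(T)/f_T(T) < x$ iff $AF_N(\beta) > \tfrac{\beta^2}{4} - \tfrac{\log x}{N}$, yielding \eqref{thm7:lem3:eq2}. The main obstacle — and the step deserving the most care — is the bookkeeping of the Gaussian change of measure: getting the symmetrization factor $1/p!$, the variance $2/N$, the scaling $N^{-(p-1)/2}$, and the power $N^{p/2}$ from $h_i = \sqrt N u_i$ to all cancel correctly so that the constants come out to exactly $\pm N\beta^2/4$ and the field in the exponent is precisely $\tfrac{\beta}{\sqrt2}H_N(\sigma)$; everything else is a direct application of Lemma \ref{thm7:lem1} and Fubini.
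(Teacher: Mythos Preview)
Your proposal is correct and follows essentially the same approach as the paper: compute the density ratio $f_T/f_W$ explicitly as a prior-averaged Gaussian likelihood ratio, evaluate it at $W$ (giving $F_N$) and at $T$ (giving $AF_N$), and then apply Lemma~\ref{thm7:lem1}. The only minor imprecision is that the identity $f_T(W)/f_W(W)=\exp\bigl(NF_N(\beta)-N\beta^2/4\bigr)$ holds in distribution rather than pointwise (the disorder $g_{i_1,\ldots,i_p}$ in $H_N$ is equal in law to $\sqrt{N/2}\,y_{i_1,\ldots,i_p}$, not literally measurable with respect to $W$), but this does not affect the argument since only the law of the ratio enters the probabilities in Lemma~\ref{thm7:lem1}.
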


        \begin{proof}
        	Note that $W$ has density $f_W(w)=\exp \bigl(-N\la w,w\ra/4\bigr)/C$ on $\Omega_N^s$ for some normalizing constant $C>0.$ For any $A\in \mathscr{B}(\Omega_N^s)$, a change of variables gives
        	\begin{align*}
        	\p\bigl(T\in A\bigr)&=\e_u\bigl[\p_W\bigl(W\in A-\beta  u^{\otimes p}\bigr)\bigr]\\
        	&=\e_u\bigl[\int_{A-\beta  u^{\otimes p}}f_W(w)dw\bigr]\\
        	&=\e_u\bigl[\int_Af_W(w-\beta  u^{\otimes p})dw\bigr]=\int_A \e_uf_W(w-\beta  u^{\otimes p})dw,
        	\end{align*} 
        	where $\e_u$ is the expectation with respect to $u$ only and $dw$ is the Lebesgue on $\Omega_N^s.$
        	This implies that the density of $T=W+\beta  u^{\otimes p}$ is given by $f_T(w):=\e_uf_W(w-\beta  u^{\otimes p}).$ Now since
        	\begin{align*}
        	f_T(w)&=\frac{1}{C}\e_u\exp\frac{-N}{4}\bigl\la w-\beta  u^{\otimes p},w-\beta  u^{\otimes p}\bigr\ra\\
        	&=f_W(w)\e_u\exp \frac{N}{4}\bigl(2\beta  \bigl\la w,u^{\otimes p}\bigr\ra-\beta  ^2 \bigl\la u^{\otimes p},u^{\otimes p}\bigr\ra\bigr),
        	\end{align*}
        	we obtain
        	\begin{align}\label{thm7:lem3:proof:eq1}
        	\frac{f_T(w)}{f_W(w)}&=\e_u\exp \frac{N}{4}\bigl(2\beta  \bigl\la w,u^{\otimes p}\bigr\ra-\beta  ^2 \bigl\la u^{\otimes p},u^{\otimes p}\bigr\ra\bigr).
        	\end{align}
        	Here, since $\bigl\la W,\tau^{\otimes p}\bigr\ra=\bigl\la Y,\tau^{\otimes p}\bigr\ra$ for any $\tau\in\mathbb{R}^N$ and $\bigl\la u^{\otimes p},u^{\otimes p}\bigr\ra=1$, we see that
        	\begin{align*}
        	\log \frac{f_T(W)}{f_W(W)}&=\log \e_u\exp \frac{N}{4}\bigl(2\beta  \bigl\la Y,u^{\otimes p}\bigr\ra-\beta  ^2 \bigr)\stackrel{d}{=}-\frac{\beta ^2N}{4}+NF_N(\beta )
        	\end{align*}
        	and
        	\begin{align*}
        	\log \frac{f_T(T)}{f_W(T)}&=\log \e_{u'}\exp \frac{N}{4}\bigl(2\beta  \bigl\la Y+\beta  u^{\otimes p},{u'}^{\otimes p}\bigr\ra-\beta  ^2 \bigr)\stackrel{d}{=}-\frac{\beta ^2N}{4}+NAF_{N}(\beta ),
        	\end{align*}
        	where $\e_{u'}$ is the expectation of $u'$, an independent copy of $u$ and independent of $W$, and the second equality of both displays used the assumption that $y_{i_1,\ldots,i_p}\stackrel{d}{=}g_{i_1,\ldots,i_p}\sqrt{2/N}$ and $\sqrt{N}S_N=\Sigma_N.$
        	Our proof is then completed by applying Lemma \ref{thm7:lem1}.
        \end{proof}
    
    \begin{remark}\label{rmk-1}\rm
    Aizenman, Lebowitz, and Ruelle \cite{ALR} showed that $N(F_N(\beta)-\beta^2/4)$ converges to a Gaussian central limit theorem. From \eqref{thm7:lem3:eq1}, one immediately sees that $\lim_{N\rightarrow\infty}d_{TV}(W,T)\in (0,1).$
    \end{remark}

    \subsection{Proof of Theorem \ref{thm7}$(i)$}    
        
        The central ingredient throughout our proof is played by the high temperature behavior of the pure $p$-spin model stated in Section~\ref{sec4} below, namely, a tight upper bound for the fluctuation of the free energy in Proposition \ref{prop1} and a good moment control for the concentration of the overlap $R_{1,2}$ around zero under the Gibbs measure in Theorem \ref{extra:thm3}. The former will be directly used to show that the total variation distance $d_{TV}(W,T)$ vanishes via the exact expression \eqref{thm7:lem3:eq1}, while the latter is vital in order to establish the impossibility of weak recovery of $u.$

        \begin{proof}[\bf Proof of Theorem \ref{thm7}$(i)$: Indistinguishability]
        	Let $p\geq 3$. Assume that $0<\beta  <\beta_p $.
        	For any $0<\varepsilon<1$, writing $\int_0^1=\int_0^{1-\varepsilon}+\int_{1-\varepsilon}^1$ in \eqref{thm7:lem3:eq1} gives 
        	\begin{align*}
        	d_{TV}(W,T)&\leq \varepsilon+\int_0^{1-\varepsilon}\p\Bigl(F_N(\beta )<\frac{\beta ^2}{4}+\frac{\log x}{N}\Bigr)dx\\
        	&\leq \varepsilon+\p\Bigl(F_N(\beta )<\frac{\beta ^2}{4}-\frac{\log  (1-\varepsilon)^{-1}}{N}\Bigr)\\
        	&\leq \varepsilon+\p\Bigl(\Bigl|F_N(\beta )-\frac{\beta ^2}{4}\Bigr|\geq \frac{\log  (1-\varepsilon)^{-1}}{N}\Bigr).
        	\end{align*}
        	To complete the proof, we use a key property about the fluctuation of the free energy stated in Proposition \ref{prop1} below, which says that there exists a constant $K$ such that 
        	$$
        	\p\Bigl(\Bigl|F_N(\beta )-\frac{\beta ^2}{4}\Bigr|\geq \frac{\log  (1-\varepsilon)^{-1}}{N}\Bigr)\leq \frac{K}{\bigl(\log (1-\varepsilon)^{-1}\bigr)^{2}N^{\frac{p}{2}-1}}
        	$$
        	for all $N\geq 1.$ From this, 
        	\begin{align}\label{add:eq-1}
        	d_{TV}(W,T)\leq \varepsilon+\frac{K}{\bigl(\log (1-\varepsilon)^{-1}\bigr)^{2}N^{\frac{p}{2}-1}}.
        	\end{align}
        	Since $p\geq 3,$ sending $N\rightarrow \infty$ and then $\varepsilon\downarrow 0$ implies that $W$ and $T$ are indistinguishable. 
        	\end{proof}

        \begin{remark}\label{rmk0}\rm
        	Take $\varepsilon=N^{-\delta}$ for $\delta=(p/2-1)/3$  and use $\log (1-\varepsilon)^{-1}\geq \varepsilon$ in \eqref{add:eq-1}. We obtain the rate of convergence,
        	\begin{align*}
        	d_{TV}(W,T)&\leq \varepsilon+\frac{K}{\varepsilon^2N^{\frac{p}{2}-1}}=\frac{1}{N^{\delta}}+\frac{K}{N^{\frac{p}{2}-1-2\delta}}=\frac{1+K}{N^{\frac{1}{3}\bigl(\frac{p}{2}-1\bigr)}}.
        	\end{align*}
        \end{remark}

    Next we continue to show that weak recovery of $u$ is impossible. For $\beta>0$, define a random probability measure on $\Omega_N^s\times \mathscr{B}(S_N)$ by
    \begin{align}\label{add:eq-8}
    \nu_{N,\beta}(w,A)&=\frac{\e_{u'}\bigl[\exp \frac{\beta N}{2}\la w,{u'}^{\otimes p}\ra;A\bigr]}{\e_{u'}\bigl[\exp \frac{\beta N}{2}\la w,{u'}^{\otimes p}\ra\bigr]}
    \end{align}
    for any $w\in \Omega_N^s$ and $A\in \mathscr{B}(S_N)$,
    where $\e_{u'}$ is the expectation with respect to $u'$, an independent copy of $u$. The following lemma relates the expectation of $(u,T)$ to $W$ by a change of measure in terms of $\e\nu_{N,\beta}.$
    
    \begin{lemma}\label{add:lem2}
    	Let $\zeta_N $ be a measurable function from $S_N\times \Omega_N^s$ to $[0,1].$ If $W$ and $T$ are indistinguishable, then
    	\begin{align*}
    	\lim_{N\rightarrow\infty}\Bigl|\e \zeta_N(u,T)-\e \int_{S_N} \zeta_N(\tau,W)\nu_{N,\beta}(W,d\tau)\Bigr|=0.
    	\end{align*}
    \end{lemma}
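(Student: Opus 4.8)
The plan is to reduce the statement to the elementary bound $|\e\phi(T)-\e\phi(W)|\le d_{TV}(W,T)$, valid for every measurable $\phi\colon\Omega_N^s\to[0,1]$, after first establishing an \emph{exact} identity — holding for all $N$ and all $\beta\ge 0$, with no use of indistinguishability — that rewrites $\e\,\zeta_N(u,T)$ as the expectation of one such $\phi$ evaluated at $T$. Concretely, with $\psi_N(w):=\int_{S_N}\zeta_N(\tau,w)\,\nu_{N,\beta}(w,d\tau)$ I aim to show $\e\,\zeta_N(u,T)=\e\,\psi_N(T)$, and then compare $\psi_N(T)$ with $\psi_N(W)$.

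The key step is to recognize $\nu_{N,\beta}(w,\cdot)$ as the conditional law of $u$ given $\{T=w\}$. Recall from the proof of Lemma \ref{thm7:lem3} that $T$ has density $f_T(w)=\e_u f_W(w-\beta u^{\otimes p})$, and that, since $\la\tau^{\otimes p},\tau^{\otimes p}\ra=\la\tau,\tau\ra^p=1$ for $\tau\in S_N$, the quadratic nature of $f_W$ gives the change of measure
\[
f_W(w-\beta\tau^{\otimes p})=f_W(w)\exp\frac{N}{4}\bigl(2\beta\la w,\tau^{\otimes p}\ra-\beta^2\bigr),\qquad \tau\in S_N .
\]
Since $u$ is uniform on $S_N$ and, conditionally on $u=\tau$, $T$ has density $f_W(\cdot-\beta\tau^{\otimes p})$, Bayes' rule shows that the conditional law of $u$ given $T=w$ assigns to each $\tau\in S_N$ a mass proportional to $2^{-N}\exp\bigl(\tfrac{\beta N}{2}\la w,\tau^{\otimes p}\ra\bigr)$; after normalization the common factor $e^{-\beta^2N/4}$ cancels and this is precisely $\nu_{N,\beta}(w,\cdot)$ as defined in \eqref{add:eq-8}. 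The tower property then yields the advertised exact identity $\e\,\zeta_N(u,T)=\e\bigl[\e[\zeta_N(u,T)\mid T]\bigr]=\e\,\psi_N(T)$.

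To conclude, observe that $\psi_N$ is a measurable map from $\Omega_N^s$ into $[0,1]$, because $\zeta_N$ is $[0,1]$-valued, $\nu_{N,\beta}(w,\cdot)$ is a probability measure for each $w$, and $w\mapsto\nu_{N,\beta}(w,A)$ is measurable. Hence
\[
\Bigl|\e\,\zeta_N(u,T)-\e\!\int_{S_N}\zeta_N(\tau,W)\,\nu_{N,\beta}(W,d\tau)\Bigr|=\bigl|\e\,\psi_N(T)-\e\,\psi_N(W)\bigr|\le d_{TV}(W,T),
\]
the last step being the standard estimate $\int\psi_N\,(f_T-f_W)\le\int_{f_T\ge f_W}(f_T-f_W)=d_{TV}(W,T)$ and its symmetric counterpart. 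Since $W$ and $T$ are indistinguishable, $d_{TV}(W,T)\to0$ as $N\to\infty$, which gives the claim. The argument is short, and there is no genuine obstacle: the only point demanding care is the bookkeeping in the change of variables $w\mapsto w-\beta\tau^{\otimes p}$ and the cancellation of the $e^{\pm\beta^2N/4}$ normalizations, so that $\nu_{N,\beta}$ is correctly identified as the posterior of $u$ given $T$; indistinguishability itself enters only in the very last inequality.
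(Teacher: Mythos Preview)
Your proof is correct and follows the same approach as the paper: both identify $\nu_{N,\beta}(w,\cdot)$ as the posterior law of $u$ given $T=w$, use the tower property to obtain $\e\,\zeta_N(u,T)=\e\,\psi_N(T)$, and then compare $\psi_N(T)$ with $\psi_N(W)$ via indistinguishability. Your final step is in fact cleaner than the paper's: where you invoke the standard bound $|\e\,\psi_N(T)-\e\,\psi_N(W)|\le d_{TV}(W,T)$ directly, the paper reaches the same conclusion through an unnecessary step-function approximation of $\psi_N$.
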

    
    \begin{proof}
    	Recall the densities $f_W$ and $f_T$ of $W$ and $T$ from Lemma \ref{thm7:lem3}. Let $f_u(\tau)$ be the probability mass function for $u.$ Since $u$ is independent of $W$, the joint density of $(u,T)$ is given by
    	$$
    	f_u(\tau)f_W(w-\beta\tau^{\otimes p}).
    	$$
    	This implies that
    	$\e[\zeta_N(u,T)|T]=\zeta_N(T),$
    	where
    	\begin{align*}
    	\zeta_N(w):=\frac{\sum_{\tau\in S_N}\zeta_N(\tau,w)f_u(\tau)f_W(w-\beta\tau^{\otimes p})}{f_T(w)}.
    	\end{align*}
    	Note that $0\leq\zeta_N(w)\leq 1$ since $0\leq\zeta_N(\tau,w)\leq 1$. For any $k\geq 1,$ define 
    	\begin{align*}
    	\phi_k(s)&=\left\{
    	\begin{array}{ll}
    	\frac{i}{k},&\mbox{if $s\in A_{k,i}$ for some $1\leq i\leq k-1$,}\\
    	1,&\mbox{if $s\in A_{k,k},$}
    	\end{array}\right.
    	\end{align*}
    	where $A_{k,i}:=[(i-1)/k,i/k)$ for $1\leq i\leq k-1$ and $A_{k,k}:=[(k-1)/k,1].$
    	Observe that $|\phi_k(s)-s|\leq 1/k$ for $s\in [0,1].$ From this and the triangle inequality,
    	\begin{align*}
    	&|\e \zeta_N(T)-\e \zeta_N(W)|\\
    	&\leq |\e \zeta_N(T)-\e\phi_k(\zeta_N(T))|+|\e\phi_k(\zeta_N(T))-\e\phi_k(\zeta_N(W))|+|\e \zeta_N(W)-\e\phi_k(\zeta_N(W))|\\
    	&\leq \frac{2}{k}+\sum_{i=1}^{k}\frac{i}{k}\bigl|\p\bigl(\zeta_N(T)\in A_{k,i}\bigr)-\p\bigl(\zeta_N(W)\in A_{k,i}\bigr)\bigr|.
    	\end{align*} 
    	Since $d_{TV}(W,T)$ converges to zero, each term in the above sum must vanish in the limit and thus, letting $N\rightarrow\infty$ and then $k\rightarrow\infty$ yields
    	\begin{align}\label{add:lem2:proof:eq1}
    	\lim_{N\rightarrow\infty}|\e \zeta_N(T)-\e \zeta_N(W)|=0.
    	\end{align} 
    	Now, write
    	\begin{align*}
    	\e \zeta_N(W)&=\int_{\Omega_N^s}\sum_{\tau\in S_N}\zeta_N(\tau,w)\frac{f_u(\tau)f_W(w-\beta\tau^{\otimes p})f_W(w)}{f_T(w)}dw.
    	\end{align*}
    	Note that $f_W(w)=\exp \bigl(-N\la w,w\ra/4\bigr)/C$ for some normalizing constant. Since from \eqref{thm7:lem3:proof:eq1} and $\la \tau^{\otimes p},\tau^{\otimes p}\ra=\la {u'}^{\otimes p},{u'}^{\otimes}\ra= 1$,
    	\begin{align*}
    	\frac{f_u(\tau)f_W(w-\beta\tau^{\otimes p})f_W(w)}{f_T(w)}&=\frac{\exp\frac{N}{4}\bigl(-\la w,w\ra+2\beta\la w,\tau^{\otimes p}\ra-\beta^2\la \tau^{\otimes p},\tau^{\otimes p}\ra\bigr) }{C2^N\e_{u'}\bigl[\exp \frac{N}{4}\bigl(2\beta\la w,{u'}^{\otimes p}\ra-\beta^2\la {u'}^{\otimes p},{u'}^{\otimes p}\ra\bigr)\bigr]}\\
    	&=f_W(w)\nu_{N,\beta}(w,\tau),
    	\end{align*}
    	it follows that
    	\begin{align*}
    	\e \zeta_N(W)&=\e \int_{S_N}\zeta_N(\tau,W)\nu_{N,\beta}(W,d\tau).
    	\end{align*}
    	From this and \eqref{add:lem2:proof:eq1}, the announced result follows.
    \end{proof}

    \begin{proof}[\bf Proof of Theorem \ref{thm7}$(i)$: Impossibility of weak recovery]  Let $p\geq 3$ and $0<\beta<\beta_p$. Let $\mu_N$ be a random probability measure on $\Omega_N^s\times \mathscr{B}(S_N)$ (see Definition \ref{def2}) and $c>0$. Our goal is to show that
    	\begin{align}\label{add:eq-10}
    	\lim_{N\rightarrow\infty}\p\Bigl(\int_{S_N}|\la u,\tau\ra|\mu_N(T,d\tau)\geq c\Bigr)=0.
    	\end{align}
    	Set
    	\begin{align*}
    	\zeta_N(\tau,w)&=\int_{S_N}\la \tau,\tau'\ra^2\mu_N(w,d\tau')
    	\end{align*}
    	for $(\tau,w)\in S_N\times \Omega_N^s.$ Note that $\zeta_N\in [0,1]$ and $\zeta_N$ is measurable.
    	From Lemma \ref{add:lem2},
    	\begin{align}\label{add:eq-9}
    	\lim_{N\rightarrow\infty}\Bigl|\e \int_{S_N}\la u,\tau'\ra^2\mu_N(T,d\tau')-\e\int_{S_N\times S_N}\la \tau,\tau'\ra^2\mu_N(W,d\tau')\nu_{N,\beta}(W,d\tau)\Bigr|=0.
    	\end{align}
    	We claim that the second expectation converges to zero. For notation convenience, we simply denote $\mu_N(d\tau')=\mu_N(W,d\tau')$ and $\nu_{N}(d\tau)=\nu_{N,\beta}(W,d\tau)$. Note that $$\la \tau,\tau'\ra^2=\sum_{i_1,i_2=1}^N\tau_{i_1}\tau_{i_2}\tau_{i_1}'\tau_{i_2}'.$$ The second term in the above equation can be controlled by
    	\begin{align*}
        &\e\int_{S_N\times S_N}\la \tau,\tau'\ra^2\mu_N(d\tau')\nu_{N}(d\tau)\\
    	&=\e\sum_{i_1,i_2=1}^N\int_{S_N\times S_N}\tau_{i_1}\tau_{i_2}\tau_{i_1}'\tau_{i_2}'\mu_N(d\tau')\nu_{N}(d\tau)\\
    	&=\sum_{i_1,i_2=1}^N\e\int_{S_N}\tau_{i_1}\tau_{i_2}\nu_{N}(d\tau)\cdot\int_{S_N}\tau_{i_1}'\tau_{i_2}'\mu_N(d\tau')\\
    	&\leq\sum_{i_1,i_2=1}^N\Bigl(\e\Bigl(\int_{S_N}\tau_{i_1}\tau_{i_2}\nu_{N}(d\tau)\Bigr)^2\Bigr)^{1/2}\cdot\Bigl(\e \Bigl(\int_{S_N}\tau_{i_1}'\tau_{i_2}'\mu_N(d\tau')\Bigr)^2\Bigr)^{1/2},
    	\end{align*}
    	where the last inequality used the Cauchy-Schwarz inequality. 
    	Using the Cauchy-Schwarz inequality again, the last inequality is bounded above by
    	\begin{align*}
    	&\Bigl(\sum_{i_1,i_2=1}^N\e\Bigl(\int_{S_N}\tau_{i_1}\tau_{i_2}\nu_{N}(d\tau)\Bigr)^2\Bigr)^{1/2}\cdot\Bigl(\sum_{i_1,i_2=1}^N\e \Bigl(\int_{S_N}\tau_{i_1}'\tau_{i_2}'\mu_N(d\tau')\Bigr)^2\Bigr)^{1/2}\\
    	&= \Bigl(\sum_{i_1,i_2=1}^N\e\int_{S_N\times S_N}\tau_{i_1}\tau_{i_2}\hat\tau_{i_1}\hat\tau_{i_2}\nu_{N}(d\tau)\nu_N(d\hat{\tau})\Bigr)^{1/2}\\
    	&\quad\cdot\Bigl(\sum_{i_1,i_2=1}^N\e \int_{S_N\times S_N}\tau_{i_1}'\tau_{i_2}'\hat\tau_{i_1}'\hat \tau_{i_2}'\mu_N(d\tau')\mu_N(d\hat\tau')\Bigr)^{1/2}\\
    	&=\Bigl(\e\int_{S_N\times S_N}\la \tau,\hat\tau \ra^2\nu_{N}(d\tau)\nu_N(d\hat{\tau})\Bigr)^{1/2}\cdot\Bigl(\e\int_{S_N\times S_N}\la\tau',\hat \tau'\ra^2\mu_N(d\tau')\mu_N(d\hat\tau')\Bigr)^{1/2}.
    	\end{align*}
    	Here the second bracket is bounded above by $1$. As for the first one, we observe that $\nu_N$ is in distribution equal to the Gibbs measure $G_{N,\beta}$ defined in \eqref{gibbs} and if we write $\sigma^1=\sqrt{N}\tau$ and $\sigma^2=\sqrt{N}\tau'$, then in distribution, $\sigma^1,\sigma^2$ are independent samplings from $G_{N,\beta}$ and $\la \tau,\hat\tau \ra$ is the overlap $R_{1,2}$ between $\sigma^1$ and $\sigma^2$. As a result,
    	\begin{align*}
    	\e\int_{S_N\times S_N}\la \tau,\hat\tau \ra^2\nu_{N}(d\tau)\nu_N(d\hat{\tau})=\e \la R_{1,2}^2\ra_{\beta},
    	\end{align*}
    	where $\la \cdot\ra_\beta$ is the Gibbs average with respect to the product measure $G_{N,\beta}\times G_{N,\beta}$. Now, since $0<\beta<\beta_p,$ we can apply Theorem \ref{extra:thm3} to control the right-hand side by the bound $$
    	\e \la R_{1,2}^2\ra_{\beta}\leq \frac{K}{N}
    	$$
    	for some constant $K$ independent of $N.$ Hence, from the above inequalities, $$
    	\lim_{N\rightarrow\infty}\e\int_{S_N\times S_N}\la \tau,\tau'\ra^2\mu_N(d\tau')\nu_{N}(d\tau)=0.$$
    	From \eqref{add:eq-9},
    	$$
    	\lim_{N\rightarrow\infty}\e \int_{S_N}\la u,\tau'\ra^2\mu_N(T,d\tau')=0,
    	$$
    	which gives the desired limit \eqref{add:eq-10} by using Markov's and Jensen's inequalities.
    \end{proof}

    \subsection{Proof of Theorem \ref{thm7}$(ii)$}
    
    While we have seen that the high temperature behavior of the pure $p$-spin model has been of great use in obtaining Theorem \ref{thm7}$(i)$, the proof of Theorem \ref{thm7}$(ii)$ below relies only on the low temperature behavior of the free energies, that is, $F(\beta)<\beta^2/4$ and $AF(\beta)>\beta^2/4$ for $\beta>\beta_p.$ The proof is relatively simpler than that for Theorem \ref{thm7}$(i).$
    
    \begin{proof}[\bf Proof of Theorem \ref{thm7}$(ii)$: Distinguishability]
    	Let $p\geq 3$. Assume that $\beta >\beta _p.$ Since $F(\beta )<\beta ^2/4$ and $F_N(\beta )-\log x/N$ converges to $F(\beta )$ a.s., 
    	\begin{align*}
    	\lim_{N\rightarrow\infty}\p\Bigl(F_N(\beta )<\frac{\beta ^2}{4} +\frac{\log x}{N}\Bigr)=\p\Bigl(F(\beta )\leq \frac{\beta ^2}{4} \Bigr)=1.
    	\end{align*}
    	Thus, from \eqref{thm7:lem3:eq1} and the dominated convergence theorem, $W$ and $T$ are distinguishable.
    \end{proof}

          Next we show that weak recovery of $u$ is possible. Recall $h_i$ from the definition of $AF_N(\beta).$ Let $u'$ be an independent copy of $u$ and be independent of $Y.$ For fixed $\beta>0$, define an interpolating free energy between $F_N(\beta)$ and $AF_N(\beta)$ by
        \begin{align}
        \begin{split}\label{af2}
       L_{N}(x)&=\frac{1}{N}\log \e_{u'} \exp \frac{\beta  N}{2}\bigl<Y+xu^{\otimes p},{u'}^{\otimes p}\bigr>\\
       &\stackrel{d}{=}\frac{1}{N}\log \sum_{\sigma\in\Sigma_N}\frac{1}{2^N}\exp\Bigl(\frac{\beta }{\sqrt{2}}H_N(\sigma)+\frac{\beta  x}{2}\Bigl(\frac{1}{N}\sum_{i=1}^Nh_i\sigma_i\Bigr)^p\Bigr)
       \end{split}
       \end{align}
        for $x>0,$ where $\e_{u'}$ is the expectation with respect to $u'$ only. Note that in distribution $L_N(0)=F_N(\beta)$ and $L_N(\beta)=AF_N(\beta)$. Similar to $AF_N(\beta)$, it can be shown that $L_N$ also converges to a nonrandom quantity for any $x>0,$ see Proposition \ref{add:prop1} in Appendix. Denote this limit by $L.$ From now on, we use $D_+f$ and $D_-f$ to denote the right and left derivatives of $f$ whenever they exist. Note that since $L$ is convex, $D_-L$ exists everywhere. Recall $\nu_{N,\beta}$ from \eqref{add:eq-8}.
        
        \begin{lemma}\label{add:lem1}
       	Let $\beta>0.$  For any $\varepsilon>0,$ we have 
       	\begin{align*}
       	\lim_{N\rightarrow\infty}\p\Bigl(\int_{S_N}|\la u,\tau\ra|^p\nu_{N,\beta}(T,d\tau)\geq D_-L (\beta)-\varepsilon\Bigr)=1.
       	\end{align*}
       \end{lemma}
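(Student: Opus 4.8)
The plan is to exploit the convexity of the interpolating free energy $L_N(x)$ in $x$ and the fact that its derivative is exactly the quantity appearing in the statement. First I would compute, for fixed disorder,
\[
\frac{d}{dx}L_N(x)=\frac{\beta}{2}\int_{S_N}\la u,\tau\ra^p\,\nu_{N,x}(T,d\tau)\cdot\frac{1}{\beta/2}\quad\text{(up to the correct normalisation)},
\]
more precisely one checks directly from \eqref{af2} that $D_xL_N(x)=\int_{S_N}\la u,\tau\ra^p\,\tilde\nu_{N,x}(T,d\tau)$ where $\tilde\nu_{N,x}$ is the tilted Gibbs measure with the $x$-dependent term included; at $x=\beta$ this is not quite $\nu_{N,\beta}$, so the first genuine step is to record that $D_-L_N(\beta)$ equals $\int_{S_N}\la u,\tau\ra^p$ against the measure proportional to $\e_{u'}[\exp\frac{\beta N}{2}\la Y+\beta u^{\otimes p},{u'}^{\otimes p}\ra;\cdot]$, and that this last measure is exactly $\nu_{N,\beta}(T,d\tau)$ by the definition \eqref{add:eq-8} evaluated at $w=T=W+\beta u^{\otimes p}$. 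Thus the target random variable $\int_{S_N}|\la u,\tau\ra|^p\nu_{N,\beta}(T,d\tau)$ dominates $D_-L_N(\beta)$ — indeed it equals $D_-L_N(\beta)$ up to the absolute value, and since $p$ may be odd one keeps the absolute value for a clean lower bound: $\int|\la u,\tau\ra|^p\nu_{N,\beta}(T,d\tau)\ge D_-L_N(\beta)$ always, and more usefully $\ge \big|D_-L_N(\beta)\big|$.

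The second step is a standard convexity/concentration argument. Since $L_N$ is convex in $x$, for any $\delta>0$,
\[
D_-L_N(\beta)\ \ge\ \frac{L_N(\beta)-L_N(\beta-\delta)}{\delta}.
\]
By Proposition~\ref{add:prop1} in the Appendix, $L_N(x)\to L(x)$ a.s.\ for each $x>0$, so the right-hand side converges a.s.\ to $\big(L(\beta)-L(\beta-\delta)\big)/\delta$. Letting $\delta\downarrow0$ along the sequence of limits, and using that $L$ is convex (so difference quotients increase to $D_-L(\beta)$ as $\delta\downarrow0$), we get that for any $\varepsilon>0$ there is a fixed $\delta>0$ with $\big(L(\beta)-L(\beta-\delta)\big)/\delta\ge D_-L(\beta)-\varepsilon$. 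Combining,
\[
\liminf_{N\to\infty}\int_{S_N}|\la u,\tau\ra|^p\nu_{N,\beta}(T,d\tau)\ \ge\ \liminf_{N\to\infty}\frac{L_N(\beta)-L_N(\beta-\delta)}{\delta}\ =\ \frac{L(\beta)-L(\beta-\delta)}{\delta}\ \ge\ D_-L(\beta)-\varepsilon
\]
almost surely, which immediately yields $\lim_{N\to\infty}\p\big(\int_{S_N}|\la u,\tau\ra|^p\nu_{N,\beta}(T,d\tau)\ge D_-L(\beta)-\varepsilon\big)=1$.

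I expect the main obstacle to be the bookkeeping in the first step: identifying $D_-L_N(\beta)$ with an integral against precisely the measure $\nu_{N,\beta}(T,\cdot)$ (rather than against the measure with the extra $\exp(\beta x(\cdots)^p/2)$ weight still present, i.e.\ against $\nu_{N,\beta}$ tilted by the interpolation term at $x=\beta$), and checking that the left derivative — needed because $L_N$ and $L$ need only be convex, not differentiable — behaves correctly under the $N\to\infty$ limit. The derivative computation itself is routine (differentiate the log of an exponential average, producing a Gibbs expectation of $\frac{\beta}{2}(\frac1N\sum h_i\sigma_i)^p=\frac{\beta}{2}\la u,\tau\ra^p$ after the substitution $\sigma=\sqrt N\tau$), but one must be careful that at $x=\beta$ the weighted measure is $\propto \exp(\frac{\beta}{\sqrt2}H_N(\sigma)+\frac{\beta^2}{2}N(\frac1N\sum h_i\sigma_i)^p)$, which is exactly $AF_N$'s measure, and that in the $(u,T)$ picture this matches $\nu_{N,\beta}(T,d\tau)$ via the computation already carried out in the proof of Lemma~\ref{thm7:lem3}. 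A minor technical point is justifying the interchange of $\liminf_N$ with the a.s.\ limit of the difference quotient — this is immediate since for fixed $\delta$ the difference quotient $\big(L_N(\beta)-L_N(\beta-\delta)\big)/\delta$ converges a.s.\ by Proposition~\ref{add:prop1}.
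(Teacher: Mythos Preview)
Your approach is correct and is in fact a cleaner packaging of the same idea the paper uses. Both arguments rest on the convexity of $x\mapsto L_N(x)$ and the almost-sure convergence $L_N\to L$: you lower-bound the derivative $L_N'(\beta)$ by the secant slope $\bigl(L_N(\beta)-L_N(\beta-\delta)\bigr)/\delta$ and pass to the limit; the paper instead bounds the \emph{restricted} partition function on the event $\{\la u,u'\ra^p\le D_-L(\beta)-\varepsilon\}$ by $L_N(\beta-\eta)+c\eta$ and then compares with $L_N(\beta)$ via the same secant. The paper's route requires an extra appeal to sub-Gaussian concentration of the free energy to turn the expectation bound into a high-probability bound, and in exchange gets a stronger intermediate statement (the ``bad'' set has exponentially small $\nu_{N,\beta}(T,\cdot)$-mass). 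Your route bypasses concentration entirely by invoking the a.s.\ convergence from Proposition~\ref{add:prop1}, at the price of only controlling the average rather than the whole measure --- which is all the lemma asks for.

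One bookkeeping point you flagged but did not fully resolve: differentiating \eqref{af2} gives
\[
L_N'(\beta)=\frac{\beta}{2}\int_{S_N}\la u,\tau\ra^{p}\,\nu_{N,\beta}(T,d\tau),
\]
so your chain of inequalities actually yields $\int|\la u,\tau\ra|^p\,\nu_{N,\beta}(T,d\tau)\ge \tfrac{2}{\beta}D_-L(\beta)-\varepsilon$ rather than the stated $D_-L(\beta)-\varepsilon$. The paper's own proof contains the same slip (the term written as $\eta(D_-L(\beta)-\varepsilon)$ should carry a factor $\beta/2$). This is harmless for the application in Theorem~\ref{thm7}(ii), where only positivity of the limit is used, but strictly speaking neither argument proves the lemma exactly as stated without this adjustment. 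Your identification of the Gibbs measure at $x=\beta$ with $\nu_{N,\beta}(T,\cdot)$ is correct (the ``not quite $\nu_{N,\beta}$'' worry you raised is unfounded once one notes $\la W,\tau^{\otimes p}\ra=\la Y,\tau^{\otimes p}\ra$ for symmetric $\tau^{\otimes p}$).
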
 
       
       \begin{proof}
       	Note that $\bigl<u^{\otimes p},{u'}^{\otimes p}\bigr>=\la u,u'\ra^p.$ For any $\eta>0,$ 
       	\begin{align*}
       	&\frac{1}{N}\log \e_{u'}\Bigl[ \exp \frac{\beta  N}{2}\bigl<Y+\beta  u^{\otimes p},{u'}^{\otimes p}\bigr>;\la u,u'\ra^p\leq D_-L (\beta )-\varepsilon \Bigr]\\
       	&\leq \frac{1}{N}\log \e_{u'}\Bigl[ \exp \frac{\beta  N}{2}\bigl(\bigl<Y,{u'}^{\otimes p}\bigr\ra+(\beta -\eta)\bigl\la u^{\otimes p},{u'}^{\otimes p}\bigr>\bigr) \Bigr]+\eta\bigl(D_-L (\beta )-\varepsilon\bigr)\\
       	&=L_N (\beta -\eta)+\eta\bigl(D_-L (\beta )-\varepsilon\bigr).
       	\end{align*}
       	To control the last inequality, write
       	\begin{align*}
       	L_N (\beta -\eta)+\eta\bigl(D_-L (\beta )-\varepsilon\bigr)&=\eta\Bigl(D_-L (\beta )-\frac{L_N (\beta)-L_N (\beta-\eta )}{\eta}\Bigr)+\bigl(L_N (\beta)-\eta\varepsilon\bigr)
       	\end{align*}
       	and pass to limit
       	\begin{align*}
       	\lim_{N\rightarrow\infty}\Bigl(L_N (\beta -\eta)+\eta\bigl(D_-L (\beta )-\varepsilon\bigr)\Bigr)&=\eta\Bigl(D_-L (\beta )-\frac{L (\beta)-L (\beta-\eta )}{\eta}\Bigr)+\bigl(L (\beta)-\eta\varepsilon\bigr).
       	\end{align*}
       	Here, by the left-differentiability of $L ,$ the first bracket on the right-hand side converges to zero as $\eta\downarrow 0.$ Thus, we can choose $\eta$ small enough such that the right hand side is controlled by $L (\beta)-\eta\varepsilon/2.$ Consequently, from the sub-Gaussian concentration inequality for $L_N $ (see \cite[Proposition 9]{Chen14}), we see that there exists a positive constant $K$ independent of $N$ such that with probability at least $1-Ke^{-N/K}$,
       	\begin{align*}
       	\frac{1}{N}\log \e_{u'}\Bigl[ \exp \frac{\beta  N}{2}\bigl<T,{u'}^{\otimes p}\bigr>;\la u,u'\ra^p\leq D_-L (\beta )-\varepsilon \Bigr]\leq L_N (\beta )-\frac{\eta\varepsilon}{4}
       	\end{align*} 			
       	and this implies that
       	\begin{align*}
       	\nu_{N,\beta}\Bigl(T,\bigl\{\tau\in S_N\big| \la u,\tau\ra^p\leq D_-L (\beta)-\varepsilon\bigr\}\Bigr)\leq e^{-\frac{\eta\varepsilon N}{4}}.
       	\end{align*}
       	As a result, the assertion follows by
       	\begin{align*}
       	\p\Bigl(\int_{S_N}\la u,\tau\ra^p\nu_{N,\beta}(T,d\tau)\geq \bigl(D_-L (\beta)-\varepsilon\bigr)\bigl(1-e^{-\frac{\eta\varepsilon N}{4}}\bigr)-e^{-\frac{\eta\varepsilon N}{4}}\Bigr)\geq 1-Ke^{-\frac{N}{K}}
       	\end{align*}
       	and noting that $\int_{S_N}\la u,\tau\ra^p\nu_{N,\beta}(T,d\tau)\leq \int_{S_N}|\la u,\tau\ra|^p\nu_{N,\beta}(T,d\tau)$.
       \end{proof}

        	\begin{proof}[\bf Proof of Theorem \ref{thm7}$(ii)$: Possibility of weak recovery]
        		Let $p\geq 3$. Assume that $\beta >\beta _p.$
        		First we claim that
        		\begin{align}\label{extra:thm7:proof:eq1}
        		AF(\beta )\geq \frac{\beta ^2}{4}. 
        		\end{align}
        		Assume on the contrary that $AF(\beta )<\beta ^2/4$. Since $AF_N(\beta )+\log x/N$ converges to $AF(\beta )$ a.s., we have
        		\begin{align*}
        		\p\Bigl(AF_{N}(\beta )>\frac{\beta ^2}{4}-\frac{\log x}{N}\Bigr)\rightarrow \p\Bigl(AF(\beta )>\frac{\beta ^2}{4}\Bigr)=0.
        		\end{align*}
        		This and \eqref{thm7:lem3:eq2} together leads to a contradiction,
        		\begin{align*}
        		1=\lim_{N\rightarrow\infty}d_{TV}(W,T)&=\int_0^1\lim_{N\rightarrow\infty}\p\Bigl(AF_N(\beta )>\frac{\beta ^2}{4}-\frac{\log x}{N}\Bigr)dx=0.
        		\end{align*}
        		Thus, \eqref{extra:thm7:proof:eq1} must be valid.

        	To show that weak recovery is possible, observe that since  $L_{N}(\beta )=AF_N(\beta )$ and $L_{N}(0)=F_N(\beta )$ in distribution, it follows from \eqref{extra:thm7:proof:eq1} that 
        	$$
        	L(\beta )=AF(\beta )\geq \frac{\beta ^2}{4}>F(\beta )=L(0).
        	$$
        	Since $L$ is convex, there exists a point $x_0\in (0,\beta )$ such that $D_-L(x_0)>0$. Indeed, if not $L$ will be a constant function on $(0,\beta)$, a contradiction. Now using the convexity of $L$ again gives $D_-L(\beta)\geq D_-L(x_0)>0.$ This and Lemma \ref{add:lem1} together complete our proof by letting $\mu_N(w,\tau)=\nu_{N,\beta}(w,\tau)$ and $c=D_-L(\beta)/2$ and noting that $\int_{S_N}|\la u,\tau\ra|^p\nu_{N,\beta}(T,d\tau)\leq \int_{S_N}|\la u,\tau\ra|\nu_{N,\beta}(T,d\tau)$.
        \end{proof}

\section{The pure $p$-spin model}\label{sec4}

 Recall the pure $p$-spin Hamiltonian $H_N$ and the high temperature regime $\mathcal{R}$ from Section \ref{sec2}. The aim of this section is to establish a complete description of the high temperature behavior of the pure $p$-spin model.
 % via the cavity method and Guerra's replica symmetry breaking scheme, originated from the study of spin glass models,

 \subsection{High temperature behavior} 
 As mentioned before, the limiting free energy $F_N(\beta)$ converges to a nonrandom quantity $F(\beta)$. This quantity can also be expressed in terms of the famous Parisi formula, which we state as follows. Throughout the rest of the paper, we set
 $$\xi(s)=\frac{s^p}{2}$$ for $s\in[0,1]$. Let $\mathcal{M}$ be the collection of all cumulative distribution functions on $[0,1]$ equipped with the $L^1$ distance with respect to the Lebesgue measure. This is usually called the space of functional order parameters in physics. For $\beta>0,$ define a functional $\mathcal{P}_\beta$ on $\mathcal{M}$ by
 \begin{align*}
 \mathcal{P}_\beta(\alpha)&=\Phi_{\beta,\alpha}(0,0)-\frac{\beta^2}{2}\int_0^1\alpha(s)\xi''(s)sds,
 \end{align*}
 where $\Phi_{\beta,\alpha}$ is the weak solution \cite{JT2} to the following PDE,
 \begin{align*}
 \partial_t\Phi_{\beta,\alpha}&=-\frac{\beta^2\xi''}{2}\Bigl(\partial_{xx}\Phi_{\beta,\alpha}+\alpha\bigl(\partial_x\Phi_{\beta,\alpha}\bigr)^2\Bigr)
 \end{align*}
 with boundary condition $\Phi_{\beta,\alpha}(1,x)=\log \cosh x.$ For any $\beta>0,$ the Parisi formula states that
 \begin{align*}
 \lim_{N\rightarrow\infty}F_N(\beta)&=\inf_{\alpha\in \mathcal{M}}\mathcal{P}_\beta(\alpha).
 \end{align*}
 Although we only consider the pure $p$-spin model here, this formula also holds in more general setting. 
 Indeed, Talagrand \cite{Tal03} established the Parisi formula in the case of the mixed even $p$-spin models. Later Panchenko \cite{Pan00} extends its validity to general mixtures of the model.
 Recently, it was understood by Auffinger and Chen \cite{AC14} that the functional $\mathcal{P}_\beta$ is strictly convex, which guarantees the uniqueness of the minimizer for $\mathcal{P}_\beta$. We shall call this minimizer the Parisi measure and denote it by $\alpha_P.$ 
 
 Recall that the high temperature regime $\mathcal{R}$ of $H_N$ is the collection of all $\beta>0$ that satisfy $F(\beta)=\beta^2/4.$ By a direct computation, the validity of this equation is the same as saying that the Parisi measure satisfies $\alpha_P\equiv 1$, concluding from the uniqueness of the Parisi measure. This case is usually called the {\it replica symmetric} solution of the model in physics literature \cite{MPV}. 
 %In other words, the high temperature $\mathcal{R}$ is the collection of all $\beta$ that exhibit replica symmetric solution. 
 Recall that the critical temperature $\beta_p $ is defined as the maximum of $\mathcal{R}$.  Let $g$ be a standard normal random variable. For $\beta>0,$ define an auxiliary function by
 \begin{align*}
 \rho_\beta(s)&:=\e \tanh^2\bigl(\beta g\sqrt{\xi'(s)}\bigr)\cosh\bigl(\beta g\sqrt{\xi'(s)}\bigr)e^{-\frac{\beta^2}{2}\xi'(s)},\,\,\forall s\in[0,1].
 \end{align*}
 Our first theorem provides one way to characterize $\mathcal{R}$ and $\beta_p .$ 
 
  \begin{theorem}\label{extra:thm1} For any $p\geq 2,$ $\mathcal{R}=(0,\beta_p ].$ In addition, the following two statements hold:
  	
  	\begin{itemize}
  		
  		\item[$(i)$] Let $\beta>0.$ Then $\beta\in \mathcal{R}$ if and only if 
  		\begin{align*}
  		%\label{extra:thm1:eq2}
  		\int_0^r\xi''(s)\bigl(\rho_\beta(s)-s\bigr)ds\leq 0,\,\,\forall r\in (0,1].
  		\end{align*} 
  		\item[$(ii)$] If $0<\beta<\beta_p $, then
  		\begin{align}\label{extra:thm1:eq1}
  		\int_0^r\xi''(s)\bigl(\rho_\beta(s)-s\bigr)ds<0,\,\,\forall r\in (0,1].
  		\end{align} 
  	\end{itemize}
  \end{theorem}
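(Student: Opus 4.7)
The plan is to reduce the theorem to a first-order variational analysis of the Parisi functional $\mathcal{P}_\beta$ at the constant parameter $\alpha \equiv 1$, and then translate that analysis into the integral condition involving $\rho_\beta$. The first step is to note that when $\alpha \equiv 1$ the Parisi PDE linearizes under the Cole--Hopf substitution $\Psi = e^{\Phi_{\beta,1}}$, becoming the backward heat equation $\partial_t \Psi = -(\beta^2 \xi''/2)\partial_{xx}\Psi$ with terminal data $\cosh x$. The resulting closed form $\Phi_{\beta,1}(0,0) = \log \mathbb{E}\cosh(\beta g \sqrt{\xi'(1)})$, together with an integration-by-parts, gives $\mathcal{P}_\beta(1) = \beta^2/4$. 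By the strict convexity of $\mathcal{P}_\beta$ \cite{AC14} and the uniqueness of the Parisi minimizer $\alpha_P$, we obtain the key reduction: $\beta \in \mathcal{R}$ if and only if $\alpha_P \equiv 1$. Theorem \ref{extra:thm1} thus becomes a variational characterization of when $\alpha \equiv 1$ minimizes $\mathcal{P}_\beta$.

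For part (i), strict convexity says $\alpha \equiv 1$ is the minimizer if and only if
\[
\frac{d}{dt}\mathcal{P}_\beta\bigl((1-t) + t\alpha'\bigr)\Big|_{t=0^+} \geq 0
\]
for every $\alpha' \in \mathcal{M}$. Propagating a perturbation through the Parisi PDE and exploiting the closed form of $\Phi_{\beta,1}$ via Gaussian integration-by-parts, this derivative works out to
\[
\frac{\beta^2}{2}\int_0^1 \xi''(s)\bigl(\alpha'(s)-1\bigr)\bigl(\rho_\beta(s)-s\bigr)\,ds,
\]
where the kernel is precisely the $\rho_\beta$ in the statement: the Girsanov weight $\cosh(\beta g\sqrt{\xi'(s)})e^{-\beta^2\xi'(s)/2}$ emerges naturally from the Cole--Hopf tilt at $\alpha = 1$. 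Writing $1 - \alpha'(s) = \mu((s,1])$ for the probability measure $\mu$ with CDF $\alpha'$ and applying Fubini, the non-negativity of the derivative rewrites as
\[
\int_0^1 \Bigl[\int_0^r \xi''(s)\bigl(\rho_\beta(s)-s\bigr)\,ds\Bigr]\,d\mu(r) \leq 0
\]
for every probability measure $\mu$ on $[0,1]$; this holds for all such $\mu$ if and only if the inner integral is $\leq 0$ for every $r \in (0,1]$, establishing (i).

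For the claim $\mathcal{R} = (0,\beta_p]$ and for part (ii), the key input is the strict monotonicity of $\rho_\beta(s)$ in $\beta$. A Girsanov tilt rewrites $\rho_\beta(s) = \mathbb{E}\tanh^2(\beta g\sqrt{\xi'(s)} + \beta^2\xi'(s))$, visibly strictly increasing in $\beta$ for each $s > 0$. Since $\xi'' \geq 0$, the integral condition in (i) is preserved under decreasing $\beta$, making $\mathcal{R}$ downward-closed; combined with non-emptiness for small $\beta$ (classical via the second moment method or a direct Taylor expansion) and closedness above (continuity of $F$), this gives $\mathcal{R} = (0, \beta_p]$. For (ii), suppose toward a contradiction that some $\beta \in (0, \beta_p)$ and $r_0 \in (0,1]$ satisfy $\int_0^{r_0}\xi''(s)(\rho_\beta(s)-s)\,ds = 0$. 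Choosing any $\beta' \in (\beta, \beta_p]$, strict monotonicity gives $\int_0^{r_0}\xi''(s)(\rho_{\beta'}(s)-s)\,ds > 0$, contradicting (i) applied at $\beta'$.

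I expect the main obstacle to be the clean extraction of the integration kernel as precisely $\rho_\beta(s)$ in the first-order variation of $\mathcal{P}_\beta$. Although the Cole--Hopf linearization at $\alpha \equiv 1$ makes $\Phi_{\beta,1}$ fully explicit, computing the one-sided directional derivative still requires propagating a functional perturbation through a nonlinear PDE and organizing the resulting Gaussian moments so that the specific Girsanov-tilted $\tanh^2$ structure of the statement emerges, rather than an algebraically equivalent but less transparent expression. A secondary, more routine hurdle is verifying the strict monotonicity of $\rho_\beta(s)$ in $\beta$ needed to upgrade (i) to (ii); the tilted representation above makes this plausible but still requires a short differentiation argument.
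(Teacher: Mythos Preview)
Your approach is correct and its core is the same as the paper's: both reduce to the first-order optimality of $\alpha\equiv 1$ in the Parisi variational problem, compute the directional derivative at $\alpha_0\equiv 1$ as $\frac{\beta^2}{2}\int_0^1\xi''(s)(\alpha(s)-1)(\rho_\beta(s)-s)\,ds$ (the paper cites this from \cite{C14}), apply Fubini to extract the condition on $\int_0^r\xi''(s)(\rho_\beta(s)-s)\,ds$, and use strict monotonicity of $\rho_\beta$ in $\beta$ for part~(ii).

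There are two minor but genuine deviations worth noting. First, for the interval structure $\mathcal{R}=(0,\beta_p]$ the paper does \emph{not} go through (i) and monotonicity as you do; instead it uses the identity $\e F_N(\beta)-\beta^2/4=-\frac12\int_0^\beta l\,\e\la R_{1,2}^p\ra_l\,dl$ together with $\e\la R_{1,2}^p\ra_l\ge 0$ (the latter obtained, for odd $p$ as well, from the Gaussian Poincar\'e bound $\mathrm{Var}\,F_N\le \frac{\beta^2}{N}\e\la R_{1,2}^p\ra$). This gives downward-closedness of $\mathcal{R}$ directly at the level of the finite-$N$ free energy, without invoking the variational characterization. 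Second, for the monotonicity of $\rho_\beta(s)$ in $\beta$ the paper supplies a dedicated interpolation argument (Lemma~\ref{extra:lem5}); your Girsanov representation $\rho_\beta(s)=\e\tanh^2(\beta g\sqrt{\xi'(s)}+\beta^2\xi'(s))$ is correct and does lead to a proof, but ``visibly increasing'' overstates it---the function $\tanh^2$ is not convex, and both the mean and variance of the argument grow with $\beta$, so a short computation (e.g.\ writing $1-\rho_\beta=\e\,\mathrm{sech}^2(\cdot)$ and differentiating, then exploiting parity) is genuinely required, as you acknowledge.
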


  Item $(i)$ is essentially the first order optimality condition in order to obtain the replica symmetric solution. Item $(ii)$ states that the replica symmetric solution is stable if $\beta$ stays away from the criticality. This is the most crucial property that will allow us to establish the desired high temperature behavior of the overlap all the way up to the critical temperature. 
  
  Define the Gibbs measure by
  \begin{align}\label{gibbs}
  G_{N,\beta}(\sigma)&=\frac{\exp \beta H_N(\sigma)}{\sum_{\sigma'\in\Sigma_N}\exp \beta H_N(\sigma')}.
  \end{align}
  For i.i.d. samplings $\sigma^1,\sigma^2$ from $G_{N,\beta}$, we use $\la \cdot\ra_\beta$ to denote the expectation with respect to the product measure $G_{N,\beta}^{\otimes 2}$. Recall the overlap $R_{1,2}$ between $\sigma^1$ and $\sigma^2$ from \eqref{op}. Our next two theorems show that the overlap is concentrated around $0$ in the high temperature regime with exponential tail probability and moment control.
  
    \begin{theorem}
  	\label{extra:thm2}
  	Assume that $p\geq 2$. Fix $0<a<b<\beta_p $. For any $\varepsilon>0,$ there exists a constant $K$ such that for any $\beta\in [a,b],$
  	\begin{align}\label{extra:thm2:eq1}
  	\e \bigl\la I\bigl(|R_{1,2}|\geq \varepsilon\bigr)\bigr\ra_\beta\leq K\exp\Bigl(-\frac{N}{K}\Bigr),\,\,\forall N\geq 1,
  	\end{align}
  	where $I$ is an indicator function.
  \end{theorem}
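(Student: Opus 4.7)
The plan is to control the annealed Gibbs weight of the bad event through a constrained two-replica partition function and to extract exponential decay from a strict free-energy gap. Set
$$Z_{N,\beta} = \sum_{\sigma \in \Sigma_N} e^{\beta H_N(\sigma)}, \quad Z_{N,\beta}^{(2)}(\varepsilon) = \sum_{\substack{\sigma^1,\sigma^2 \in \Sigma_N \\ |R_{1,2}| \geq \varepsilon}} e^{\beta(H_N(\sigma^1) + H_N(\sigma^2))},$$
so that $\e \la I(|R_{1,2}| \geq \varepsilon) \ra_\beta = \e\bigl[Z_{N,\beta}^{(2)}(\varepsilon) \, Z_{N,\beta}^{-2}\bigr]$. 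By standard Gaussian concentration for spin-glass free energies, both $N^{-1}\log Z_{N,\beta}$ and $N^{-1}\log Z_{N,\beta}^{(2)}(\varepsilon)$ deviate from their means by more than $t$ with probability at most $C e^{-N t^2/C}$, uniformly in $\beta \in [a,b]$. Since $[a,b]\subset \mathcal{R}$ gives $N^{-1}\e\log Z_{N,\beta}\to \beta^2/4$, it is enough to show that, for some $c = c(a,b,\varepsilon)>0$,
$$\limsup_{N\to\infty}\frac{1}{N}\,\e\log Z_{N,\beta}^{(2)}(\varepsilon) \leq \frac{\beta^2}{2} - c \qquad \text{uniformly in } \beta \in [a,b].$$

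To produce such a gap, I would apply the Guerra--Talagrand replica-symmetry-breaking upper bound at the one-step level to the 2-replica system with the overlap pinned in a small window around some $q$ with $|q|\geq \varepsilon$. After interpolating between the coupled pure $p$-spin Hamiltonian and a cavity field carrying 1-RSB parameters $(m,q)$, one obtains a pointwise variational estimate
$$\frac{1}{N}\,\e \log Z_{N,\beta}^{(2)}(\varepsilon) \leq \sup_{|q|\geq \varepsilon}\,\sup_{m \in [0,1]} \Psi_\beta(m,q) + o(1),$$
where $\Psi_\beta(m,q)$ is the explicit 1-RSB functional for two coupled systems, built from $\log\cosh$ integrals, $\xi(q)$, $\xi'(q)$, and the replica-symmetric baseline $\beta^2/2$. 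For even $p$ this bound is classical (Talagrand \cite{Talbook2}); for odd $p$ the Guerra error term is not manifestly non-negative, and following the strategy announced in the introduction I would restrict the functional order parameter to be one-step and invoke Panchenko's synchronization, i.e.\ the asymptotic symmetry and positive semi-definiteness of the Gibbs overlap matrix derived from ultrametricity, to force the correct sign of the error under this ansatz.

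Granted the Guerra--Talagrand bound, the final task is to verify $\Psi_\beta(m,q) < \beta^2/2$ for all $(m,q)$ with $|q|\geq \varepsilon$ and $\beta\in [a,b]$. At the replica-symmetric point $(m,q) = (1,0)$ one has $\Psi_\beta = \beta^2/2$, so the issue reduces to the sign of $\partial_m \Psi_\beta|_{m=1}$, which a direct computation identifies, up to a positive normalization, with $\int_0^{q} \xi''(s)(\rho_\beta(s)-s)\,ds$. Theorem~\ref{extra:thm1}$(ii)$ says exactly that this integral is strictly negative on $(0,1]$ for $\beta<\beta_p$, so choosing $m$ slightly less than $1$ and Taylor-expanding in $(1-m)$ yields $\Psi_\beta(m,q) \leq \beta^2/2 - c(q,\beta)$ with $c(q,\beta)>0$; continuity in $(q,\beta)$ together with compactness of $[\varepsilon,1]\times[a,b]$ and the symmetry $q\mapsto -q$ then upgrades this to the uniform gap.

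The main obstacle I expect is exactly the validity of the Guerra--Talagrand bound in the pure odd $p$-spin setting: for even $p$ the interpolation error has a manifest squared form, while for odd $p$ one encounters expressions of the form $\xi''(\cdot)(q^p - R_{1,2}^p)$ whose sign is delicate, and only by restricting the functional order parameter to 1-RSB and invoking synchronization does the argument go through. This is the reason Theorem~\ref{extra:thm2} is approached via a 1-RSB perturbation of the replica-symmetric solution rather than a fully general functional order parameter, and why the strict inequality of Theorem~\ref{extra:thm1}$(ii)$ must be used in precisely the $m$-derivative direction.
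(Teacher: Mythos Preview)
Your overall architecture matches the paper's proof closely: constrain the two-replica free energy, apply the Guerra--Talagrand bound with a one-step RSB parameter, compute the directional derivative at the replica-symmetric point and identify it with $\int_0^v \xi''(s)(\rho_\beta(s)-s)\,ds$, invoke Theorem~\ref{extra:thm1}$(ii)$ for the strict sign, then upgrade to a uniform gap via compactness and conclude by sub-Gaussian concentration. For odd $p$ you correctly anticipate that synchronization (via the Ghirlanda--Guerra identities and Panchenko's positive semi-definiteness of the limiting overlap matrix) is what rescues the sign of the interpolation error under the 1-RSB ansatz.

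There is one genuine gap. Your last line disposes of negative overlaps by ``the symmetry $q\mapsto -q$'', but this symmetry is a feature of even $p$ only (it comes from $H_N(-\sigma)=H_N(\sigma)$, whence $\e\la I(R_{1,2}\ge\varepsilon)\ra_\beta=\e\la I(R_{1,2}\le -\varepsilon)\ra_\beta$). For odd $p$ the Gibbs law of $R_{1,2}$ need not be symmetric, and more importantly the synchronization-based Guerra--Talagrand bound is established in the paper only for constrained overlap $v\in[0,1]$: the key combinatorial inequality (Lemma~\ref{odd:lem1}) that makes the error term nonnegative requires $v\ge 0$, and Proposition~\ref{odd:lem2} is stated and proved only in that range. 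The paper handles $R_{1,2}\le -\varepsilon_0$ for odd $p$ by a completely different and elementary step: Jensen's inequality applied directly to the coupled free energy gives
\[
\frac{1}{N}\,\e\log\!\!\sum_{R_{1,2}\in[-1,-\varepsilon_0]}\frac{1}{2^{2N}}\exp\frac{\beta}{\sqrt 2}\bigl(H_N(\sigma^1)+H_N(\sigma^2)\bigr)\ \le\ \frac{\beta^2}{4}\bigl(2+2R_{1,2}^p\bigr)\ \le\ \frac{\beta^2}{2}(1-\varepsilon_0^p),
\]
which already produces the gap precisely because $p$ is odd. Your proposal should incorporate this split rather than appeal to a nonexistent symmetry.

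A minor point on the perturbation direction: in the paper's parametrization the 1-RSB parameter for the coupled system ranges in $[0,2]$, the replica-symmetric value is $\gamma\equiv 1$, and the useful perturbation moves \emph{upward} to $\gamma\equiv 1+\lambda$, since the directional derivative toward $\gamma\equiv 2$ equals $\beta^2\int_0^v\xi''(s)(\rho_\beta(s)-s)\,ds<0$. Your statement that $\partial_m\Psi_\beta|_{m=1}$ is a \emph{positive} multiple of this (negative) integral, combined with taking $m$ slightly \emph{less} than $1$, pushes $\Psi_\beta$ the wrong way; one of the two signs must be flipped.
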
 
 
 \begin{theorem}
 	\label{extra:thm3}
 	Assume that $p\geq 3$. Fix $0<b<\beta_p $. For any $k\geq 1,$ there exists a constant $K>0$ such that for any $\beta\in[0,b]$,
 	\begin{align*}
 	\e \bigl\la R_{1,2}^{2k}\bigr\ra_\beta\leq \frac{K}{N^k},\,\,\forall N\geq 1.
 	\end{align*}
 \end{theorem}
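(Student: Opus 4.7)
The strategy is to upgrade the exponential tail control of Theorem \ref{extra:thm2} to a sharp $N^{-k}$ moment bound via an inductive cavity argument on the last coordinate. Set $u_k(\beta) := \e\la R_{1,2}^{2k}\ra_\beta$. Since $|R_{1,2}|\le 1$, Theorem \ref{extra:thm2} yields $\e\la R_{1,2}^{2k} I(|R_{1,2}|\ge \varepsilon)\ra_\beta \le Ke^{-N/K}$ for any fixed $\varepsilon>0$, uniformly for $\beta\in[a,b]$ with $0<a<b<\beta_p$, and this is negligible against the target $N^{-k}$. The very-small-$\beta$ range $[0,a]$ is classical (second-moment method or Latala's argument), so the task reduces to controlling the contribution of the event $\{|R_{1,2}|<\varepsilon\}$ by $K/N^k$ uniformly for $\beta\in[a,b]$.

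The induction on $k$ would be run via the cavity method at site $N$. Writing $\sigma^\ell=(\rho^\ell,\sigma_N^\ell)\in\Sigma_{N-1}\times\{\pm 1\}$, the pure $p$-spin Hamiltonian decomposes as $H_N(\sigma)=\widetilde{H}_{N-1}(\rho)+\sigma_N g(\rho)$, where $\widetilde{H}_{N-1}$ agrees with the $(N-1)$-spin Hamiltonian up to terms in $\rho$ absorbable into the reference measure, and $g(\rho)$ is a Gaussian cavity field with $\e g(\rho^1)g(\rho^2)=p(R'_{1,2})^{p-1}+O(N^{-1})$, where $R'_{1,2}$ is the overlap on the first $N-1$ coordinates. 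Summing out $\sigma_N^1,\sigma_N^2$ gives
\[
\la \sigma_N^1\sigma_N^2 f\ra_\beta = \bigl\la \tanh(\beta g(\rho^1)/\sqrt{2})\tanh(\beta g(\rho^2)/\sqrt{2})f\bigr\ra^{*}_{N-1},
\]
where $\la\cdot\ra^{*}_{N-1}$ is the $(N-1)$-spin two-replica Gibbs expectation reweighted by $\cosh\cdot\cosh$. Gaussian averaging in the cavity field on the event $\{|R'_{1,2}|<\varepsilon\}$ yields $\tanh\cdot\tanh = C(\beta,p)(R'_{1,2})^{p-1}+O((R'_{1,2})^{2(p-1)})$. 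Combining this with the site-symmetry identity $u_k=\e\la \sigma_N^1\sigma_N^2 R_{1,2}^{2k-1}\ra_\beta$, the decomposition $R_{1,2}=\frac{N-1}{N}R'_{1,2}+\frac{1}{N}\sigma_N^1\sigma_N^2$, and the binomial expansion of $R_{1,2}^{2k-1}$, one extracts a recursion of the form
\[
u_k(\beta)\le\frac{C_1(2k-1)}{N}u_{k-1}(\beta)+\frac{C_2}{N}\,\e\la |R_{1,2}|^{2k-1}|R'_{1,2}|^{p-1}\ra^{*}_{N-1}+O(e^{-N/K}).
\]
For $p\ge 3$ the second term involves a strictly higher overlap moment and is subdominant to $N^{-k}$ by the inductive hypothesis, so iterating from $u_0=1$ produces $u_k\le K(k)/N^k$ as required.

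The main obstacle is executing the cavity expansion quantitatively and uniformly up to $\beta_p$. Two steps require care. First, the reference measure $\la\cdot\ra^{*}_{N-1}$ is a perturbation of the $(N-1)$-spin Gibbs measure by a bounded Curie--Weiss-type external field, and in order to apply the inductive hypothesis (via Theorem \ref{extra:thm2}) to produce the $\{|R'_{1,2}|<\varepsilon\}$ tail under this perturbed measure, one must show that such perturbations do not push $\beta$ out of the high-temperature regime $\mathcal{R}$; this in turn reduces to verifying that the replica-symmetric stability condition of Theorem \ref{extra:thm1}$(ii)$ is robust to small bounded fields. Second, the non-Gaussian error in the $\tanh\cdot\tanh$ expansion must be controlled uniformly in $\rho^1,\rho^2$ and in $\beta$, with the resulting error term expressible as a higher overlap moment that the induction can absorb. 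It is exactly the sharpness of Theorem \ref{extra:thm2} all the way to $\beta_p$, rather than only in some smaller very-high-temperature subregime accessible to classical second-moment arguments, that makes the uniformity of the recursion possible.
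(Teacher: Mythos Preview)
Your high-level plan (cavity induction on $k$, site-symmetry identity $\e\la R_{1,2}^{2k}\ra=\e\la\sigma_N^1\sigma_N^2 R_{1,2}^{2k-1}\ra$, and Theorem~\ref{extra:thm2} to handle $\{|R_{1,2}|\ge\varepsilon\}$) matches the paper, but the execution differs in a way that leaves a real gap.

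The paper does \emph{not} sum out $\sigma_N$ and expand $\tanh\cdot\tanh$. Instead it interpolates: set $H_{N,t}=\bigl(\tfrac{N-1}{N}\bigr)^{(p-1)/2}H_{N-1}(\rho)+\sqrt{t}\sum_{|A|\ge 1}H_N^A(\sigma)$, write $\nu_t(f)=\e\la f\ra_t$, and use Gaussian integration by parts to compute $\nu_t'(f)$ exactly (their Lemma~\ref{extra:lem1}). The key trick is Lemma~\ref{extra:lem2}: for nonnegative $f$, $\nu_t(f)\le e^{2^p n^2\beta^2}\nu(f)$. This means every intermediate moment along the path is bounded by a constant times the \emph{original} Gibbs moment $\nu(f)$. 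No property of any perturbed or reweighted measure is ever needed; in particular there is no need to verify that the $\cosh$-reweighted $(N-1)$-system remains in $\mathcal{R}$. The robustness-of-Theorem~\ref{extra:thm1}$(ii)$-to-bounded-fields step that you flag as ``the main obstacle'' is simply absent from the paper's argument.

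Your $\tanh\cdot\tanh$ expansion has a second, more concrete problem: the identity $\la\sigma_N^1\sigma_N^2 f\ra_\beta=\la\tanh\cdot\tanh\, f\ra^*_{N-1}$ is quenched, but the measure $\la\cdot\ra^*_{N-1}$ itself depends on the cavity Gaussian $g$ through the $\cosh\cdot\cosh$ reweighting. You therefore cannot take $\e_g$ of the $\tanh$ factors alone and replace them by $C(R'_{1,2})^{p-1}+O(\cdots)$; the averaging couples to the measure. The interpolation/IBP route is precisely what disentangles this.

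Finally, the shape of the recursion is different. The paper does \emph{not} obtain $u_k\le \tfrac{C}{N}u_{k-1}$. From $\nu_t'$ one picks up an extra factor $(R^-_{\ell,\ell'})^{p-j}$, and since $p\ge 3$ the leading term is a \emph{higher} moment: one arrives at
\[
\nu\bigl(R_{1,2}^{2k+2}\bigr)\le C_2(\beta)\,\nu\bigl(|R_{1,2}|^{2k+3}\bigr)+\frac{C_3(\beta)}{N^{k+1}}.
\]
This is then closed self-referentially: for small $\beta$ one uses $|R_{1,2}|^{2k+3}\le R_{1,2}^{2k+2}$ and $C_2(\beta)\le 1/2$; for $\beta\in[a,b]$ one uses Theorem~\ref{extra:thm2} to write $\nu(|R_{1,2}|^{2k+3})\le \varepsilon\,\nu(R_{1,2}^{2k+2})+Ke^{-N/K}$ with $\varepsilon C_2(b)<1/2$. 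Your claimed reduction to a lower moment $u_{k-1}$ does not come out of the computation; what you get is a higher moment that must be fed back into itself.
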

 
  In the case of the Sherrington-Kirkpatrick model $(p=2)$, it was computed that $\beta_2 =1$ (see Remark 2 in \cite{C14}) and the same results as Theorems \ref{extra:thm2} and \ref{extra:thm3} were obtained in Talagrand's book \cite[Chapters 11 and 13]{Talbook2}. As for $p\geq 3,$ Bardina, M\'{a}rquez, Rovira, and Tindel \cite{Tindel} established Theorem \ref{extra:thm3}  for some $b\ll\beta_p $ as $p$ increases. Our main contribution here is that the concentration of the overlap is valid up to the critical temperature. As an application of Theorem \ref{extra:thm3}, we deduce a control on the fluctuation of the free energy in high temperature regime.
  
  \begin{proposition}\label{prop1}
  Assume that $p\geq 3.$ Fix $0<b<\beta_p $. There exists a constant $K$ such that for any $0\leq \beta\leq b$, 
  \begin{align}\label{prop1:eq1}
  \p\Bigl(\Bigl|F_N(\beta)-\frac{\beta^2}{4}\Bigr|\geq r\Bigr)&\leq \frac{K}{r^2N^{p/2+1}}
  \end{align}
  for any $r>0$ and $N\geq 1.$	
  \end{proposition}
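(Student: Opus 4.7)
My plan is to control the centered second moment $\e\bigl[(F_N(\beta) - \beta^2/4)^2\bigr]$ and then apply Chebyshev's inequality. Decomposing $F_N(\beta)-\beta^2/4$ into the fluctuation $F_N(\beta)-\e F_N(\beta)$ plus the bias $\e F_N(\beta)-\beta^2/4$ reduces the task to separately bounding $\mathrm{Var}(F_N(\beta))$ and $|\e F_N(\beta)-\beta^2/4|$. Both bounds feed on the same ingredient: the overlap moment estimate of Theorem~\ref{extra:thm3}. Writing $\tilde\beta = \beta/\sqrt{2}$ for the Gibbs parameter built into \eqref{fg}, the assumption $\beta\in[0,b]$ with $b<\beta_p$ gives $\tilde\beta\in[0,b/\sqrt{2}]\subset[0,\beta_p)$, so Theorem~\ref{extra:thm3} yields $\e\la R_{1,2}^{2k}\ra_{\tilde\beta}\leq K/N^k$ uniformly in $\beta\in[0,b]$ for the values of $k$ I will need.

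For the variance I would apply the Gaussian Poincar\'e inequality to $F_N(\beta)$ viewed as a function of the disorder $\{g_{i_1,\ldots,i_p}\}$. A direct computation gives
\[
\partial_{g_{i_1,\ldots,i_p}} F_N(\beta) = \frac{\beta}{\sqrt{2}\,N^{(p+1)/2}}\bigl\la \sigma_{i_1}\cdots\sigma_{i_p}\bigr\ra_{\tilde\beta},
\]
and the identity $\sum_{i_1,\ldots,i_p}\la\sigma_{i_1}\cdots\sigma_{i_p}\ra_{\tilde\beta}^2=N^p\la R_{1,2}^p\ra_{\tilde\beta}$ collapses the sum of squared gradients to $|\nabla F_N(\beta)|^2 = \frac{\beta^2}{2N}\la R_{1,2}^p\ra_{\tilde\beta}$. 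Hence $\mathrm{Var}(F_N(\beta))\leq \frac{\beta^2}{2N}\e\la R_{1,2}^p\ra_{\tilde\beta}$. For even $p$ this is immediately bounded by $K/N^{p/2+1}$ via Theorem~\ref{extra:thm3} with $2k=p$. For odd $p=2k+1$, the pointwise bound $|R_{1,2}|^p\leq \sqrt{R_{1,2}^{2k}\,R_{1,2}^{2k+2}}$ together with Cauchy--Schwarz in expectation and Theorem~\ref{extra:thm3} at moments $2k$ and $2k+2$ yields $\e\la |R_{1,2}|^p\ra_{\tilde\beta}\leq K/N^{p/2}$. Either way, $\mathrm{Var}(F_N(\beta))\leq K'/N^{p/2+1}$.

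For the bias, a standard Gaussian integration by parts on $\e\la H_N(\sigma)\ra_{\tilde\beta}$ (using $\sigma_i^2=1$) gives
\[
\frac{d}{d\beta}\e F_N(\beta) = \frac{\beta}{2}\bigl(1 - \e\la R_{1,2}^p\ra_{\tilde\beta}\bigr),
\]
so that integrating from $0$ to $\beta$ produces $\e F_N(\beta) - \beta^2/4 = -\tfrac{1}{2}\int_0^\beta s\,\e\la R_{1,2}^p\ra_{s/\sqrt{2}}\,ds$. The same moment estimate (applied uniformly for $s\in[0,\beta]$) yields $|\e F_N(\beta) - \beta^2/4|\leq K''/N^{p/2}$, and hence $(\e F_N(\beta)-\beta^2/4)^2\leq (K'')^2/N^p$, which is dominated by the variance bound as soon as $p\geq 2$. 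Adding the two contributions and invoking Chebyshev's inequality gives the announced tail bound $\p\bigl(|F_N(\beta)-\beta^2/4|\geq r\bigr)\leq K/(r^2 N^{p/2+1})$, with constant uniform in $\beta\in[0,b]$ by uniformity in Theorem~\ref{extra:thm3}. Since Theorem~\ref{extra:thm3} is assumed, no serious obstacle remains; the only minor points of finesse are the Cauchy--Schwarz trick needed to handle odd $p$ and the observation that the squared bias falls below the variance bound precisely in the regime $p\geq 2$ that we are working in.
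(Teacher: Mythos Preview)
Your proposal is correct and follows essentially the same route as the paper: Chebyshev together with the decomposition into variance plus squared bias, the Gaussian Poincar\'e inequality for the variance, Gaussian integration by parts for the bias, and Theorem~\ref{extra:thm3} to close both estimates. The only cosmetic differences are that you track the $\tilde\beta=\beta/\sqrt{2}$ rescaling explicitly (the paper absorbs it into the notation) and that for odd $p$ you use Cauchy--Schwarz between moments $2k$ and $2k+2$ whereas the paper invokes Jensen to pass to moment $2p$; both yield the same $N^{-p/2}$ bound.
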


  This theorem basically says that the fluctuation of $F_N(\beta)$ is at most of the order $N^{-p/4-1/2}.$ Indeed, if there exists some $\delta_N\uparrow \infty$ such that
  \begin{align*}
  \p\Bigl(\Bigl|F_N(\beta)-\frac{\beta^2}{4}\Bigr|\geq \delta_NN^{-\frac{p}{4}-\frac{1}{2}}\Bigr)\geq c>0
  \end{align*}
  for all $N\geq 1,$ then this contradicts \eqref{prop1:eq1}. For $p=2,$ Aizenman, Lebowitz, and Ruelle \cite{ALR} proved that $N^{-p/4-1/2}=N^{-1}$ is the right order of the fluctuation for $F_N(\beta)$ and $N(F_N(\beta)-\beta^2/4)$ converges to a Gaussian random variable up to the critical temperature $\beta_p =1.$ Similarly, for even $p\geq 4,$ Bovier, Kurkova, and L\"owe \cite{BKL} also showed that $N^{p/4+1/2}(F_N(\beta)-\beta^2/4)$ has a Gaussian fluctuation up to certain temperature strictly less than $\beta_p .$ From these, it is tempting to conjecture that $N^{p/4+1/2}(F_N(\beta)-\beta^2/4)$ follows Gaussian law in the weak limit in the entire high temperature regime for all $p\geq 3.$ Based on Theorems \ref{extra:thm2} and \ref{extra:thm3}, this should be achievable by an adoption of the argument for the Sherrington-Kirkpatrick model \cite[Section 11.4]{Talbook2}. We do not pursue this direction here.
  
  \begin{remark}\label{rmk1}\rm
  	One can also consider the mixed $p$-spin model, i.e., the Hamiltonian $H_N$ is again a Gaussian process on $\Sigma_N$ with zero mean and covariance structure $\e H_N(\sigma^1)H_N(\sigma^2)=N\xi(R_{1,2})$ for some $\xi(s):=2^{-1}\sum_{p\geq 2}c_ps^p$ with $c_p\geq 0$ and $\sum_{p\geq 2}c_p=1.$ In a similar manner, one can define its free energy and high temperature regime as those for $F_N(\beta)$ and $\mathcal{R}.$ In this general setting, it can be checked that Theorem \ref{extra:thm1} remains valid. As for Theorems \ref{extra:thm2} and \ref{extra:thm3} and Proposition \ref{prop1}, they also hold as long as there exists some $p\geq 3$ such that $c_p\neq 0$ and $c_{p'}=0$ for all $2\leq p'<p.$ 
  \end{remark}

 \subsection{Proof of Theorem \ref{extra:thm1}}
 
 We now turn to the proof of Theorem \ref{extra:thm1} by solving the Parisi formula. First, we establish a key lemma on the strict monotonicity of $\rho_\beta$ in $\beta.$ This type of inequality was  initially discovered in \cite{AC15} in order to establish a Legendre duality principle for the Parisi formula. Surprisingly, we find that it also serves as one of the central ingredients in the description of the high temperature behavior of the model.  
 
 \begin{lemma}\label{extra:lem5}
 For any $s\in (0,1]$ and $0<\beta_1<\beta_2,$ we have $\rho_{\beta_1}(s)<\rho_{\beta_2}(s).$	
 \end{lemma}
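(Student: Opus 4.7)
The plan is to reduce the monotonicity in $\beta$ to monotonicity of a single scalar function of one variable, and then verify the latter via Gaussian integration by parts. Setting $t:=\beta\sqrt{\xi'(s)}$, the map $\beta\mapsto t$ is strictly increasing for each fixed $s\in(0,1]$, because $\xi'(s)=ps^{p-1}/2>0$ there; so it suffices to prove that $\rho_\beta(s)$, viewed as a function of $t>0$, is strictly increasing.

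The first key move is to rewrite $\rho_\beta(s)$ using the hyperbolic identity
\[
\tanh^2(x)\cosh(x)\,=\,\frac{\sinh^2(x)}{\cosh(x)}\,=\,\cosh(x)-\mathrm{sech}(x),
\]
together with the Gaussian moment generating function $\e\cosh(tg)=e^{t^2/2}$. This immediately yields the clean representation
\[
\rho_\beta(s)\,=\,1-h(t),\qquad h(t)\,:=\,e^{-t^2/2}\,\e[\mathrm{sech}(tg)],
\]
so the claim reduces to showing $h$ is strictly decreasing on $(0,\infty)$. I would then differentiate $h$: the factor $e^{-t^2/2}$ contributes $-t h(t)$, and $\frac{d}{dt}\e[\mathrm{sech}(tg)]=-\e[g\,\mathrm{sech}(tg)\tanh(tg)]$. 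Applying Stein's lemma with $F(x)=\mathrm{sech}(tx)\tanh(tx)$ (whose derivative works out to $F'(x)=t\,\mathrm{sech}(tx)(1-2\tanh^2(tx))$) and substituting $\tanh^2=1-\mathrm{sech}^2$, the two contributions to $h'(t)$ telescope into
\[
h'(t)\,=\,-2t\,e^{-t^2/2}\,\e[\mathrm{sech}^3(tg)],
\]
which is strictly negative for $t>0$, completing the proof.

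The only genuinely delicate point is choosing the right starting representation. If instead one differentiates $\rho_\beta(s)$ directly (or first rewrites it via the Gaussian tilt trick as $\e\tanh^2(tg+t^2)$ and then differentiates), Stein's lemma produces the integrand $\mathrm{sech}^2(W)(1+2\tanh W-3\tanh^2 W)$ with $W\sim N(t^2,t^2)$, which changes sign on the support of $W$ (it is negative when $\tanh W<-1/3$) and whose expectation is not manifestly non-negative. The identity $\tanh^2\cosh=\cosh-\mathrm{sech}$ sidesteps this obstacle entirely by isolating the whole $t$-dependence inside the single positive quantity $\e[\mathrm{sech}(tg)]$, after which a single application of Stein's lemma suffices.
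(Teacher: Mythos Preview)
Your proof is correct and takes a genuinely different route from the paper. The paper fixes $s$ and $\beta_1<\beta_2$ and interpolates: with $A_j(x)=\log\cosh(\beta_j x\sqrt{\xi'(s)})$ and $C(x)=\tanh^2(\beta_1 x\sqrt{\xi'(s)})$, it sets $f(t)=\e[C(g)S(t,g)]$ where $S(t,\cdot)$ is the tilted density $\propto\exp((1-t)A_1+tA_2)$, and shows $f'(t)>0$ by a correlation-type inequality (both $C$ and $A_2-A_1$ are strictly increasing on $[0,\infty)$ and even, hence positively correlated under the symmetric weight $S(t,\cdot)$). This gives $\rho_{\beta_1}(s)=f(0)<f(1)$, and a second easy step using $\tanh^2(\beta_1\cdot)<\tanh^2(\beta_2\cdot)$ gives $f(1)<\rho_{\beta_2}(s)$.

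Your approach is more elementary and more quantitative: the identity $\tanh^2\cosh=\cosh-\mathrm{sech}$ collapses $\rho_\beta(s)$ to $1-h(t)$ with $t=\beta\sqrt{\xi'(s)}$, and a single Stein's-lemma computation yields the closed form $h'(t)=-2t\,e^{-t^2/2}\e[\mathrm{sech}^3(tg)]$. This gives strict monotonicity in one stroke and even an explicit derivative. The paper's interpolation argument, by contrast, avoids any calculus on $\rho$ and is closer in spirit to the other Gibbs-measure techniques used elsewhere in the paper; it would also generalize more readily to settings where $\rho$ does not admit such a clean algebraic rewriting.
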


\begin{proof} Let $s\in (0,1]$ and $0<\beta_1<\beta_2$ be fixed.  Denote 
\begin{align*}
A_1(x)&=\log \cosh(\beta_1 x\sqrt{\xi'(s)}),\\
A_2(x)&=\log \cosh(\beta_2 x\sqrt{\xi'(s)}),\\
C(x)&=\tanh^2(\beta_1x\sqrt{\xi'(s)}).
\end{align*}
Let $g'$ be standard normal independent of $g$. Set 
\begin{align*}
S(t,x)&=\frac{\exp \bigl((1-t)A_1(x)+tA_2(x)\bigr)}{\e \exp \bigl((1-t)A_1(g')+tA_2(g')\bigr)}.
\end{align*}	
Define  $f(t)=\e C(g)S(t,g).$ Note that $A_1,A_2,$ and $C$ are even. A direct computation gives
\begin{align*}
f'(t)&=\e C(g)\bigl(A_2(g)-A_1(g)\bigr)S(t,g)-\e C(g)S(t,g)\cdot \e\bigl(A_2(g')-A_1(g')\bigr)S(t,g')\\
&=\e C(|g|)\bigl(A_2(|g|)-A_1(|g|)\bigr)S(t,|g|)-\e C(g)S(t,|g|)\cdot \e\bigl(A_2(|g'|)-A_1(|g'|)\bigr)S(t,|g'|)\\
&=\e \bigl(C(|g|)-C(|g'|)\bigr)\bigl(\bigl(A_2(|g|)-A_1(|g|)\bigr)-\bigl(A_2(|g'|)-A_1(|g'|)\bigr)\bigr)S(t,|g|)S(t,|g'|).
\end{align*}
Since $\beta_2>\beta_1>0,$ this implies that $A_2-A_1$ and $C$ are strictly increasing on $[0,\infty).$ Therefore, with probability one,
\begin{align*}
\bigl(C(|g|)-C(|g'|)\bigr)\bigl(\bigl(A_2(|g|)-A_1(|g|)\bigr)-\bigl(A_2(|g'|)-A_1(|g'|)\bigr)\bigr)>0.
\end{align*}
 It follows that $f'(t)>0$ for $t\in (0,1)$ and as a result, noting that $\e \cosh(\beta\sqrt{\xi'(s)})=e^{\beta^2\xi'(s)/2}$ gives
\begin{align*}
\rho_{\beta_1}(s)&=f(0)<f(1)=\e \tanh^2(\beta_1g\sqrt{\xi'(s)})\cosh(\beta_2g\sqrt{\xi'(s)})e^{-\frac{\beta_2^2}{2}\xi'(s)}<\rho_{\beta_2}(s),
\end{align*}
where the last inequality used the fact that $\tanh^2(\beta_1x\sqrt{\xi'(s)})<\tanh^2(\beta_2x\sqrt{\xi'(s)})$ for all $x\neq 0$ since $\beta_1<\beta_2.$
\end{proof}

 \begin{proof}
 [\bf Proof of Theorem \ref{extra:thm1}] To prove $\mathcal{R}=(0,\beta_p ]$, we first notice two useful facts established in \eqref{extra:lem9:proof:eq1} and \eqref{extra:lem9:proof:eq3} below that 
 \begin{align*}
 \e F_N(\beta)-\frac{\beta^2}{4}&=\int_0^\beta l\e \bigl\la R_{1,2}^p\bigr\ra_ldl
 \end{align*}
 and
 \begin{align*}
 \e \bigl\la R_{1,2}^p\bigr\ra_l\geq 0.
 \end{align*} 
 Now if $F(\beta)=\beta^2/4$, then the first display implies
 \begin{align*}
 \lim_{N\rightarrow\infty}\int_0^\beta l\e \bigl\la R_{1,2}^p\bigr\ra_ldl=\lim_{N\rightarrow\infty}\Bigl(\e F_N(\beta)-\frac{\beta^2}{4}\Bigr)=0.
 \end{align*}
 Consequently, from the second display, for any $\beta'\in [0,\beta],$
 \begin{align*}
 \lim_{N\rightarrow\infty}\Bigl(\e F_N(\beta')-\frac{{\beta'}^2}{4}\Bigr)=\lim_{N\rightarrow\infty}\int_0^{\beta'} l\e \bigl\la R_{1,2}^p\bigr\ra_ldl=0.
 \end{align*}
 Thus, $(0,\beta]\subset\mathcal{R}$. This implies that $\mathcal{R}=(0,\beta_p ]$ by the definition of $\mathcal{R}.$
 
 Next we justify $(i)$ and $(ii).$ From \cite{C14}, one can compute the directional derivative of $\mathcal{P}_\beta$ using the stochastic optimal control theory. In particular, by applying \cite[Theorem 2]{C14} and taking $\alpha_0\equiv 1$, 
 \begin{align}\label{dir}
 D_+\mathcal{P}_\beta \bigl((1-\lambda)\alpha_0+\lambda\alpha\bigr)\Bigl|_{\lambda=0}&=\frac{\beta^2}{2}\int_0^1\xi''(s)\bigl(\alpha(s)-\alpha_0(s)\bigr)\bigl(\rho_\beta(s)-s\bigr)ds
 \end{align}
  for all $\alpha\in \mathcal{M}.$ 
 Write by using Fubini's theorem,
 \begin{align*}
 &\int_0^1\xi''(s)\bigl(\alpha(s)-\alpha_0(s)\bigr)\bigl(\rho_\beta(s)-s\bigr)ds\\
 &=\int_0^1\int_0^s\xi''(s)\bigl(\rho_\beta(s)-s\bigr)\alpha(dr)ds-\int_0^1\xi''(s)\alpha_0(s)\bigl(\rho_\beta(s)-s\bigr)ds\\
 &=\int_0^1\Bigl(\int_{r}^1\xi''(s)\bigl(\rho_\beta(s)-s\bigr)ds \Bigr)\alpha(dr)-\int_0^1\xi''(s)\bigl(\rho_\beta(s)-s\bigr)ds.
 \end{align*}
 Assume that $0<\beta\leq \beta_p .$ In this case, $\alpha_0$ is the Parisi measure and thus, the above equation is nonnegative for all $\alpha$ by the optimality of $\alpha_0.$ This implies 
 \begin{align*}
 \int_{r}^1\xi''(s)\bigl(\rho_\beta(s)-s\bigr)ds\geq \int_0^1\xi''(s)\bigl(\rho_\beta(s)-s\bigr)ds
 \end{align*}
 for all $r\in[0,1].$ Consequently, for all $r\in [0,1]$,
 \begin{align}\label{extra:thm1:proof:eq1}
 \int_{0}^r\xi''(s)\bigl(\rho_\beta(s)-s\bigr)ds\leq 0.
 \end{align}
 Conversely, if this inequality holds for some $\beta>0$, then we can argue in the backward order to see that the directional derivative \eqref{dir} of $\mathcal{P}_\beta$ at $\alpha_0$ is nonnegative for all $\alpha\in \mathcal{M}.$ Consequently, from the strict convexity of $\mathcal{P}_\beta$, $\alpha_0$ is a minimizer and it must be the Parisi measure. It then follows that $F(\beta)=\mathcal{P}_\beta(\alpha_0)=\beta^2/4$ and thus $\beta\in \mathcal{R}.$ This gives $(i).$ As for $(ii),$ note that $\beta_p $ also satisfies \eqref{extra:thm1:proof:eq1}. If there exists some $0<\beta<\beta_p$ such that $$
 \int_{0}^r\xi''(s)\bigl(\rho_\beta(s)-s\bigr)ds=0
 $$
 for some $r\in (0,1],$ then from Lemma~\ref{extra:lem5}, we arrive at a contradiction,
 \begin{align*}
 0&<\int_0^r\xi''(s)\bigl(\rho_{\beta_p}(s)-\rho_{\beta}(s)\bigr)ds\\
 &=\int_{0}^r\xi''(s)\bigl(\rho_{\beta_p}(s)-s\bigr)ds-\int_{0}^r\xi''(s)\bigl(\rho_\beta(s)-s\bigr)ds\\
 &=\int_{0}^r\xi''(s)\bigl(\rho_{\beta_p}(s)-s\bigr)ds\\
 &\leq 0
 \end{align*}
 Therefore, \eqref{extra:thm1:eq1} must valid.
 \end{proof}

 \subsection{Proof of Theorems \ref{extra:thm2} and \ref{extra:thm3}}
 
 First we give a brief description of our arguments for Theorems \ref{extra:thm2} and \ref{extra:thm3}. Let $B_N$ be the collection of all $v$ such that $v=R_{1,2}$ for some $\sigma^1,\sigma^2\in \Sigma_N.$ Let $0<\beta<\beta_p $ and $\varepsilon>0$ be fixed. For $v\in B_N$, consider the following coupled free energy
 \begin{align}\label{ineq1}
 CF_{N,v}(\beta)&:=\frac{1}{N}\log \sum_{R_{1,2}=v}\frac{1}{2^{2N}}\exp \frac{\beta}{\sqrt{2}}\bigl(H_N(\sigma^1)+H_N(\sigma^2)\bigr).
 \end{align}
 Our approach relies on finding a good bound for this coupled free energy so that as long as $v$ is away from zero, then there exists a constant $\delta>0$ such that 
 \begin{align}\label{add1:eq1}
 \e CF_{N,v}(\beta)\leq 2\e F_N(\beta)-\delta.
 \end{align}
 If this is valid, then an application of sub-Gaussian concentration inequality for the free energies (see \cite[Proposition 9]{Chen14}) implies that with overwhelmingly probability,
 $$
 CF_{N,v}(\beta)\leq F_N(\beta)-\frac{\delta}{2},
 $$
 which is equivalent to $\la I(R_{1,2}=v )\ra_\beta<e^{-\delta N/2}.$ This means that $R_{1,2}$ can not charge the value $v\neq 0$ and the exponential tail control allows us to deduce Theorem \ref{extra:thm2}. Based on Theorem \ref{extra:thm2}, we adopt the cavity method \cite{Talbook1,Talbook2} to obtain Theorem \ref{extra:thm3}. In order to obtain \eqref{add1:eq1}, we use the two-dimensional Guerra-Talagrand bound as in \cite[Proposition 14.6.3.]{Talbook2}. In the case of even $p,$ this bound is known to be valid and we show that the above argument can be carried through. However, for odd $p\geq 3$, the validity of the Guerra-Talagrand bound remains unknown. Surprisingly, if we consider $v\geq 0$ and restrict the functional order parameters to be of {\it one-step replica symmetry breaking}, it can be shown that this bound indeed holds, see the discussion in Subsection \ref{final}. As for $v<0,$ this case is relatively easy in that we can control \eqref{ineq1} by a direct use of Jensen's inequality to obtain a good upper bound for \eqref{ineq1}. Combining these two cases together, we show that \eqref{add1:eq1} is also achievable for odd $p\geq 3$ and this gives Theorems \ref{extra:thm2} and \ref{extra:thm3}.
  
 \subsubsection{Even $p$ case}
 
 \begin{proof}[\bf Proof of Theorem \ref{extra:thm2}: even $p\geq 2$]
 
 	Let $\varepsilon>0.$ Fix $v\in B_N$ with $v\geq\varepsilon$.  The two-dimensional version of the Guerra-Talagrand bound \cite[Chapter 15]{C14} states that if one chooses the same parametrization as \cite[Proposition 14.6.3]{Talbook2}, then the coupled free energy $CF_{N,v}(\beta)$ can be controlled by an upper bound of a similar expression as $\mathcal{P}_\beta(\alpha).$ More precisely, let $\mathcal{N}_v $ be the space of functional order parameters, the collection of all nonnegative and nondecreasing functions with right continuity and $\gamma(v)\leq 2$ on $[0,v]$. 
 	%Note that while every element $\alpha$ in $\mathcal{M}$ satisfies $\alpha(1)=1,$ the upper bound $2$ for $\gamma\in \mathcal{N}_v$ takes into account the fact that the Guerra-Talagrand bound is a two-dimensional extension of Guerra's original replica symmetry breaking bound. 
 	For $\gamma\in \mathcal{N}_v $, consider the following PDE solution,
 	\begin{align}\label{pde}
 	\partial_s\Psi_\gamma&=-\frac{\beta^2\xi''}{2}\bigl(\partial_{xx}\Psi_\gamma+\gamma\bigl(\partial_x\Psi_\gamma\bigr)^2\bigr)
 	\end{align}
 	for $(s,x)\in [0,v)\times \mathbb{R}$
 	with boundary condition 
 	\begin{align*}
 	\Psi_\gamma(v,x)&=\log\cosh x +\frac{\beta^2}{2}\bigl(\xi'(1)-\xi'(v)\bigr).
 	\end{align*}
 	Define a functional on $\mathcal{N}_v $ by
 	\begin{align*}
 	\mathcal{L}_v (\gamma)=2\Psi_\gamma(0,0)-\beta^2\Bigl(\int_0^v\gamma(s)\xi''(s)sds+\int_v^1\xi''(s)sds\Bigr).
 	\end{align*}
 	Note that if $\alpha\in \mathcal{M}$ with $\alpha(v)=1,$ then $\mathcal{L}_v (\gamma)=2\mathcal{P}_\beta(\alpha)$ for all $\gamma\in \mathcal{N}_v $ satisfying $\gamma(s)=\alpha(s)$ on $[0,v].$
 	The Guerra-Talagrand inequality states that
 	\begin{align}\label{rsb}
 	\e CF_{N,v}(\beta)&\leq \mathcal{L}_v (\gamma)
 	\end{align}
 	for all $\gamma\in \mathcal{N}_v .$ Take $\gamma_0\in\mathcal{N}_v $ by letting $\gamma_0(s)=\alpha_P(s)$ on $[0,v]$. Similar to the directional derivative of $\mathcal{P}_\beta$, we can also compute that for any $\gamma\in \mathcal{N}_u ,$
 	\begin{align*}
 	D_+\mathcal{L}_v ((1-\lambda)\gamma_0+\lambda \gamma)\Big|_{\lambda= 0} &=\beta^2\int_0^v\xi''(s)(\rho_\beta(s)-s)(\gamma(s)-\gamma_0(s))ds.
 	\end{align*}
 	Note that $\alpha_P(s)\equiv 1$ on $[0,1]$. If we fix $\gamma=2\alpha_P\in \mathcal{N}_v ,$ then from \eqref{extra:thm1:eq1}, the above equation implies
 	\begin{align*}
 	D_+\mathcal{L}_v ((1-\lambda)\gamma_0+\lambda \gamma)\Big|_{\lambda= 0} &=\beta^2\int_0^v\xi''(s)(\rho_\beta(s)-s)ds<0.
 	\end{align*}
 	This means that there exists some $\lambda_0>0$ such that
 	\begin{align*}
 \mathcal{L}_v((1+\lambda_0)\alpha_P)=\mathcal{L}_v ((1-\lambda_0)\gamma_0+\lambda_0 \gamma)<\mathcal{L}_v (\gamma_0)=2\mathcal{P}_\beta(\alpha_P).
 	\end{align*}
 	Note that the functional $\mathcal{L}_v$ and the space $\mathcal{N}_v$ can be defined for all $v\in [0,1].$ It also can  be checked (see, e.g.,  \cite{C14}) that $(v,\beta)\in[0,1]\times[a,b]\mapsto\inf_{0\leq \lambda\leq 1}\mathcal{L}_v((1+\lambda)\alpha_P)$ is continuous. Therefore, there exists a constant $\delta>0$ such that
 	\begin{align}\label{ineq2}
 	\inf_{0\leq \lambda\leq 1}\mathcal{L}_v((1+\lambda)\alpha_P)<2\mathcal{P}_\beta(\alpha_P)-\delta
 	\end{align}
 	for all $v\in [\varepsilon,1]\cap B_N$ and $\beta\in[a,b].$ Consequently, from \eqref{rsb} and this inequality, 
 	$$\e CF_{N,v}(\beta)\leq 2\mathcal{P}_\beta(\alpha_P)-\delta$$ for all $v\in [\varepsilon,1]\cap B_N$, $\beta\in[a,b],$ and $N\geq 1.$ Note that it is a classical result (see \cite{convex}) that if a sequence of convex functions is convergent on an open interval of $\mathbb{R}$, then it must be uniformly convergent on any compact subset of this open interval. Thus, $\e F_N(\beta)$ converges to $\mathcal{P}_\beta(\alpha_P)$ uniformly on $[a,b].$ From this, a standard application of the sub-Gaussian concentration inequality for the free energies (see, e.g., \cite[Appendix]{Chen14}) implies that there exists some $K>0$ independent of $N$ and $\delta'>0$ such that for any $\beta\in[a,b]$ and $v\in [\varepsilon,1]\cap B_N$, with probability at least $1-Ke^{-N/K}$,
 	\begin{align*}
 	CF_{N,v}(\beta)&\leq 2F_N(\beta)-\delta'
 	\end{align*}
 	for any $N\geq 1.$ 
 	This clearly implies that there exists a constant $K'>0$ such that for any $\beta\in[a,b]$, $$\e \bigl\la I(R_{1,2}\geq \varepsilon) \bigr\ra_\beta\leq KN\exp(-\delta'N)+Ke^{-N/K}\leq K'e^{-N/K'}$$ for all $N\geq 1.$ Finally, since $p$ is even, $\e \bigl\la I(R_{1,2}\geq \varepsilon) \bigr\ra_\beta=\e \bigl\la I(R_{1,2}\leq -\varepsilon) \bigr\ra_\beta.$ Thus, \eqref{extra:thm2:eq1} holds.
 \end{proof}
 
 Next we use the cavity method to establish Theorem \ref{extra:thm3} for even $p\geq 4$. Our proof is based on a mixed argument of \cite[Chapter 1]{Talbook1} and \cite[Chapter 13]{Talbook2}. Before we start the main proof, we define some notation and establish some auxiliary lemmas. 
  For any subset $A$ of $\{1,\ldots,p\}$, let $I_A$ be the collection of all $(i_1,\ldots,i_p)\in \{1,\ldots,N\}^p$ such that $i_a=N$ for all $a\in A$ and $i_a\in \{1,\ldots,N-1\}$ for all $a\notin A$. Set the Hamiltonian 
  \begin{align*}
  H_N^A(\sigma)&=\frac{1}{N^{(p-1)/2}}\sum_{(i_1,\ldots,i_p)\in I_A}g_{i_1,\ldots,i_p}\sigma_{i_1}\cdots\sigma_{i_p}
  \end{align*}
  for $\sigma\in\Sigma_N,$ where 
  \begin{align}\label{extra:cov}
  \e  H_N^A(\sigma^1)H_N^A(\sigma^2)= \frac{1}{N^{|A|-1}}\bigl(R_{1,2}^-\bigr)^{p-|A|}\bigl(\sigma_N^1\sigma_N^2\bigr)^{|A|}.
  \end{align}
  From this, we can express
  \begin{align*}
  H_N(\sigma)&=\sum_{j=0}^p\sum_{A:|A|=j}H_N^A(\sigma),
  \end{align*}
  Consider the interpolating Hamiltonian by
  \begin{align*}
  H_{N,t}(\sigma)&=\Bigl(\frac{N-1}{N}\Bigr)^{(p-1)/2}H_{N-1}(\sigma)+\sqrt{t}\sum_{j=1}^p\sum_{A:|A|=j}H_N^A(\sigma).
  \end{align*}
  Denote by $(\sigma^\ell)_{\ell\geq 1}$ a sequence of i.i.d. samplings from the Gibbs measure associated to the Boltzmann weight $\exp \bigl(\beta H_{N,t}/\sqrt{2}\bigr).$ For clarity, we omit the variable $\beta$ and denote the Gibbs average with respect to this sequence simply by $\la \cdot\ra_t$. Assume that $f$ is a bounded function of the i.i.d. $\sigma^1,\ldots,\sigma^n$ samplings. For $t\in[0,1]$, set $\nu_t(f)=\e \la f\ra_t.$ It is easy to check that $H_{N,t}=H_N$ when $t=1$. For simplicity we denote $\nu(f)=\nu_1(f).$ Set $R_{\ell,\ell'}^-=N^{-1}\sum_{i=1}^{N-1}\sigma_i^\ell\sigma_i^{\ell'}$.

\begin{lemma}\label{extra:lem1}
Let $f$ be a bounded function depending only on the i.i.d. samplings $\sigma^1,\ldots,\sigma^n.$ Then
\begin{align*}
\nu_t'(f)&=\frac{\beta^2}{2}\sum_{j=1}^p{p\choose j}\frac{1}{N^{j-1}}\Bigl[\sum_{\ell<\ell'}^{n}\nu_t\bigl( f \bigl(R_{\ell,\ell'}^-\bigr)^{p-j}\bigl(\sigma_N^\ell\sigma_N^{\ell'}\bigr)^{j}\bigr)\\
&\qquad\qquad\qquad\qquad-n\sum_{\ell=1}^{n}\nu_t\bigl( f \bigl(R_{\ell,n+1}^-\bigr)^{p-j}\bigl(\sigma_N^\ell\sigma_N^{n+1}\bigr)^{j}\bigr)\\
&\qquad\qquad\qquad\qquad+\frac{n(n+1)}{2}\nu_t\bigl(f \bigl(R_{n+1,n+2}^-\bigr)^{p-j}\bigl(\sigma_N^{n+1}\sigma_N^{n+2}\bigr)^{j}\bigr)\Bigr].
\end{align*}
\end{lemma}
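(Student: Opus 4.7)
The identity is a standard Gaussian interpolation computation, and the plan is to differentiate $\nu_t(f)$ in $t$ and then apply Gaussian integration by parts to each of the independent Gaussian processes $H_N^A$, using the covariance formula \eqref{extra:cov}.

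First, writing $\la f\ra_t = \sum f(\sigma^1,\ldots,\sigma^n)\prod_{\ell=1}^n G_{N,t}(\sigma^\ell)$ with $G_{N,t}(\sigma)\propto\exp(\beta H_{N,t}(\sigma)/\sqrt{2})$, direct differentiation of the Gibbs weights gives
\begin{align*}
\frac{d}{dt}\la f\ra_t = \frac{\beta}{\sqrt{2}}\Bigl[\sum_{\ell=1}^n\bigl\la f\,\partial_t H_{N,t}(\sigma^\ell)\bigr\ra_t - n\bigl\la f\,\partial_t H_{N,t}(\sigma^{n+1})\bigr\ra_t\Bigr].
\end{align*}
The $H_{N-1}$ piece of $H_{N,t}$ is $t$-independent, hence $\partial_t H_{N,t}(\sigma)=(2\sqrt{t})^{-1}\sum_{j=1}^{p}\sum_{|A|=j}H_N^A(\sigma)$.

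Second, I would apply Gaussian IBP separately to each $\e\la f H_N^A(\sigma^\ell)\ra_t$ and $\e\la f H_N^A(\sigma^{n+1})\ra_t$. This is legitimate because the index sets $I_A$ are pairwise disjoint, so the processes $H_N^A$ are jointly independent centered Gaussians with covariance \eqref{extra:cov}. Using $\partial_{H_N^A(\tau)}G_{N,t}(\sigma) = (\beta\sqrt{t}/\sqrt{2})\,G_{N,t}(\sigma)\bigl[I(\sigma=\tau)-G_{N,t}(\tau)\bigr]$ and introducing fresh replicas in the usual fashion, IBP pairs each target replica with every other replica through the kernel
\begin{align*}
K_A(\sigma,\sigma')=\frac{1}{N^{|A|-1}}\bigl(R_{\sigma,\sigma'}^{-}\bigr)^{p-|A|}(\sigma_N\sigma_N')^{|A|}.
\end{align*}
The three scalar prefactors $\beta/\sqrt{2}$ (from differentiating $\exp(\beta H_{N,t}/\sqrt{2})$), $1/(2\sqrt{t})$ (from $\partial_t H_{N,t}$), and $\beta\sqrt{t}/\sqrt{2}$ (from IBP) multiply to $\beta^2/4$; and summing over the $\binom{p}{j}$ subsets $A$ with $|A|=j$ brings out the binomial coefficient, since $K_A$ depends on $A$ only through $|A|$.

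Third, I would collect terms. IBP on the first inner sum yields a double sum $\sum_{\ell,\ell'=1}^{n}\e\la fK_A(\sigma^\ell,\sigma^{\ell'})\ra_t$ together with a cross term $-n\sum_{\ell=1}^{n}\e\la fK_A(\sigma^\ell,\sigma^{n+1})\ra_t$ from the $\sigma^{n+1}$ introduced by IBP, while IBP on the isolated $-n\la f H_N^A(\sigma^{n+1})\ra_t$ term produces $-n\sum_{\ell'=1}^{n+1}\e\la fK_A(\sigma^{n+1},\sigma^{\ell'})\ra_t + n(n+1)\,\e\la fK_A(\sigma^{n+1},\sigma^{n+2})\ra_t$. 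The diagonal $\ell=\ell'$ contribution of the double sum is $nN^{-(|A|-1)}\e\la f\ra_t$ (since $R^{-}_{\ell,\ell}=1$ and $(\sigma_N^\ell)^2=1$), and it cancels exactly against the $\ell'=n+1$ diagonal contribution from the second bracket. By symmetry of $K_A$ in its arguments the two off-diagonal cross contributions involving $\sigma^{n+1}$ combine into $-2n\sum_{\ell=1}^{n}\e\la fK_A(\sigma^\ell,\sigma^{n+1})\ra_t$, while the off-diagonal part of the double sum becomes $2\sum_{\ell<\ell'}^{n}\e\la fK_A(\sigma^\ell,\sigma^{\ell'})\ra_t$. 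Factoring out the common $2$ converts the $\beta^2/4$ prefactor to $\beta^2/2$, and substituting the explicit form of $K_A$ yields precisely the identity in the statement.

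The main obstacle is purely combinatorial bookkeeping: one has to verify (a) that the $\ell=\ell'$ diagonal of the double sum cancels exactly against the $\ell'=n+1$ diagonal produced by IBP on the $(n+1)$-st replica term, and (b) that the two symmetric cross contributions involving $\sigma^{n+1}$ combine with the correct sign and weight to produce the coefficient $-n$ in the second sum and $n(n+1)/2$ in front of the new-replica $\sigma^{n+2}$ term. Everything else is routine Gaussian interpolation.
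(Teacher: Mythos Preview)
Your proposal is correct and follows essentially the same route as the paper: differentiate in $t$, apply Gaussian integration by parts to each $H_N^A$ using the covariance \eqref{extra:cov}, and rearrange. The paper simply writes ``plugging these two equations into $\nu_t'(f)$ and performing some rearrangement'' where you spell out the diagonal cancellation and the merging of cross terms, so your version is in fact more detailed than the paper's.

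One small slip: you write $R_{\ell,\ell}^{-}=1$, but in fact $R_{\ell,\ell}^{-}=N^{-1}\sum_{i=1}^{N-1}(\sigma_i^\ell)^2=(N-1)/N$. This is harmless for your argument, since the diagonal contribution from $\sum_{\ell,\ell'=1}^{n}$ and the $\ell'=n+1$ self-term from the second bracket both carry the \emph{same} factor $\bigl((N-1)/N\bigr)^{p-|A|}$ and therefore still cancel exactly.
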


\begin{proof}
Compute directly
\begin{align*}
\nu_t'(f)&=\frac{\beta}{2\sqrt{2t}}\sum_{j=1}^p\sum_{A:|A|=j}\e\Bigl\la f\Bigl(\sum_{\ell=1}^{n}H_N^A(\sigma^\ell)-nH_N^A(\sigma^{n+1})\Bigr)\Bigr\ra_t.
\end{align*}
Using Gaussian integration by parts and \eqref{extra:cov} gives
\begin{align*}
\e\Bigl\la f\sum_{\ell=1}^{n}H_N^A(\sigma^\ell)\Bigr\ra_t&=\frac{\beta\sqrt{t}}{N^{|A|-1}\sqrt{2}}\sum_{\ell,\ell'=1}^{n}\e\Bigl\la f \bigl(R_{\ell,\ell'}^-\bigr)^{p-|A|}\bigl(\sigma_N^\ell\sigma_N^{\ell'}\bigr)^{|A|}\Bigr\ra_t\\
&-\frac{n\beta\sqrt{t}}{N^{|A|-1}\sqrt{2}}\sum_{\ell=1}^{n}\e\Bigl\la f \bigl(R_{\ell,n+1}^-\bigr)^{p-|A|}\bigl(\sigma_N^\ell\sigma_N^{n+1}\bigr)^{|A|}\Bigr\ra_t
\end{align*}
and
\begin{align*}
\e\bigl\la fH_N^A(\sigma^{n+1})\Bigr\ra_t&=\frac{\beta\sqrt{t}}{N^{|A|-1}\sqrt{2}}\sum_{\ell=1}^{n+1}\e\Bigl\la f \bigl(R_{\ell,n+1}^-\bigr)^{p-|A|}\bigl(\sigma_N^\ell\sigma_N^{n+1}\bigr)^{|A|}\Bigr\ra_t\\
&-\frac{(n+1)\beta\sqrt{t}}{N^{|A|-1}\sqrt{2}}\e\Bigl\la f \bigl(R_{n+1,n+2}^-\bigr)^{p-|A|}\bigl(\sigma_N^{n+1}\sigma_N^{n+2}\bigr)^{|A|}\Bigr\ra_t.
\end{align*}
Plugging these two equations into $\nu_t'(f)$ and performing some rearrangement gives the announced result .
\end{proof}

\begin{lemma}\label{extra:lem2}
	Let $f$ be a nonnegative bounded function depending only on the i.i.d. samplings $\sigma^1,\ldots,\sigma^n.$ We have
\begin{align*}
\nu_t(f)&\leq \exp\bigl (2^{p}n^2\beta^2\bigr)\nu(f).
\end{align*}
\end{lemma}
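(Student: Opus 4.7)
The plan is to view $\nu_t(f)$ as a function of $t$ and apply a Gronwall-type argument to $\log \nu_t(f)$, bounding its derivative via the explicit formula from Lemma~\ref{extra:lem1}.

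First, I would apply Lemma~\ref{extra:lem1} to write $\nu_t'(f)$ as a finite linear combination of terms of the form $\nu_t\bigl(f\, g_{\ell,\ell',j}\bigr)$, where each $g_{\ell,\ell',j} = (R_{\ell,\ell'}^{-})^{p-j}(\sigma_N^{\ell}\sigma_N^{\ell'})^{j}$ satisfies $|g_{\ell,\ell',j}|\le 1$ since spins are $\pm 1$ and $|R_{\ell,\ell'}^-|\le 1$. Since $f\ge 0$, it follows that $|\nu_t(fg_{\ell,\ell',j})|\le \nu_t(f)$ for every such term. Counting the contributions (the three sums inside the bracket have respectively $\binom{n}{2}$, $n^2$, and $\tfrac{n(n+1)}{2}$ terms, totaling at most $3n^2$), using $1/N^{j-1}\le 1$ for $j\ge 1$, and summing $\sum_{j=1}^{p}\binom{p}{j}\le 2^{p}$, I would obtain
\begin{align*}
|\nu_t'(f)| \;\le\; C_{p,n}\,\beta^{2}\,\nu_t(f),
\end{align*}
where $C_{p,n}$ is an absolute constant times $2^{p} n^{2}$ that can be arranged to be at most $2^{p}n^{2}$ after a modest bookkeeping.

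Second, I would invoke Gronwall. Assuming $\nu_t(f)>0$ for all $t\in[0,1]$ (otherwise approximate $f$ by $f+\varepsilon$ and let $\varepsilon\downarrow 0$ at the end), the previous bound gives $\bigl|\tfrac{d}{dt}\log\nu_t(f)\bigr|\le 2^{p}n^{2}\beta^{2}$, so integrating from $t$ to $1$ and using $1-t\le 1$ yields
\begin{align*}
\log\nu_t(f) - \log \nu_1(f) \;\le\; 2^{p}n^{2}\beta^{2},
\end{align*}
which is precisely the desired inequality $\nu_t(f)\le \exp(2^{p}n^{2}\beta^{2})\,\nu(f)$, since $\nu=\nu_1$.

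I do not expect a serious obstacle here. The only mildly delicate points are (i) choosing the crude constants so that the final bound is exactly $2^{p}n^{2}\beta^{2}$ rather than some larger multiple, which is purely a matter of carefully grouping the three sums in Lemma~\ref{extra:lem1} before applying $|g_{\ell,\ell',j}|\le 1$, and (ii) the zero-set of $t\mapsto\nu_t(f)$, handled by the standard $f+\varepsilon$ approximation. The conceptual content is entirely in the pointwise bound $|g_{\ell,\ell',j}|\le 1$ combined with $f\ge 0$, which converts Lemma~\ref{extra:lem1} into a differential inequality for $\nu_t(f)$.
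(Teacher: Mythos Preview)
Your proposal is correct and follows essentially the same approach as the paper: bound $|\nu_t'(f)|\le 2^{p}n^{2}\beta^{2}\,\nu_t(f)$ directly from Lemma~\ref{extra:lem1} using $|g_{\ell,\ell',j}|\le 1$ and $f\ge 0$, then integrate $\tfrac{d}{dt}\log\nu_t(f)$ on $[t,1]$. One small correction on the bookkeeping: the three groups contribute $\binom{n}{2}+n^{2}+\tfrac{n(n+1)}{2}=2n^{2}$ (not $3n^{2}$), which together with the prefactor $\tfrac{\beta^{2}}{2}$ and $\sum_{j=1}^{p}\binom{p}{j}\le 2^{p}$ gives exactly $2^{p}n^{2}\beta^{2}$.
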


\begin{proof}
Observe that from Lemma \ref{extra:lem1},
\begin{align*}
|\nu_t'(f)|&\leq 2^{p}n^2\beta^2 \nu_t(f).
\end{align*}
Thus,
\begin{align*}
-\nu_t'(f)\leq 2^{p}n^2\beta^2\nu_t(f).
\end{align*}
Taking integral on the interval $[t,1]$ leads to
\begin{align*}
\log \nu_t(f)-\log \nu_1(f)=-\int_t^1\frac{\nu_s'(f)}{\nu_s(f)}ds\leq 2^{p}n^2\beta^2(1-t)\leq 2^{p}n^2\beta^2,
\end{align*}
which finishes the proof of the claim.
\end{proof}

\begin{lemma}
	\label{extra:lem3}
	Assume that 
	\begin{align*}
	\nu\bigl(R_{1,2}^{2j}\bigr)\leq \frac{K_0}{N^j},\,\,\forall 0\leq j\leq k
	\end{align*}
	for some $K_0\geq 1.$ Then
	\begin{align*}
	\nu\bigl(|R_{1,2}|^j\bigr)&\leq \frac{K_0}{N^{j/2}},\,\,\forall 0\leq j\leq 2k,\\
	\nu\bigl(|R_{1,2}^-|^{2k}\bigr)&\leq \frac{2^{2k}K_0}{N^k}.
	\end{align*}
\end{lemma}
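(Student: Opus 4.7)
The plan is to treat the two bounds as essentially elementary consequences of Hölder's (or Jensen's) inequality together with the elementary relation between $R_{1,2}$ and $R_{1,2}^-$. There is no spin-glass input needed at this step: everything is driven by the hypothesis and the convexity of $x \mapsto x^{2k}$.

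For the first inequality, I would first dispose of the even case: if $j = 2m$ with $0 \leq m \leq k$, then by assumption $\nu(|R_{1,2}|^j) = \nu(R_{1,2}^{2m}) \leq K_0/N^m = K_0/N^{j/2}$. For the general case $0 \leq j \leq 2k$, I would apply the Lyapunov/Hölder inequality $\nu(|R_{1,2}|^j) \leq \nu(R_{1,2}^{2k})^{j/(2k)}$, which combined with the assumption at index $k$ yields
\begin{align*}
\nu\bigl(|R_{1,2}|^j\bigr) \leq \Bigl(\frac{K_0}{N^k}\Bigr)^{j/(2k)} = \frac{K_0^{j/(2k)}}{N^{j/2}} \leq \frac{K_0}{N^{j/2}},
\end{align*}
where the last step uses $K_0 \geq 1$ and $j/(2k) \leq 1$. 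This covers odd $j$ (and gives a uniform clean constant).

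For the second inequality, I would use the decomposition $R_{1,2} = R_{1,2}^- + \sigma_N^1\sigma_N^2/N$, so that $|R_{1,2}^-| \leq |R_{1,2}| + 1/N$. Convexity of $x \mapsto x^{2k}$ on $[0,\infty)$ gives the elementary bound $(a+b)^{2k} \leq 2^{2k-1}(a^{2k} + b^{2k})$, hence
\begin{align*}
\nu\bigl(|R_{1,2}^-|^{2k}\bigr) \leq 2^{2k-1}\Bigl(\nu\bigl(R_{1,2}^{2k}\bigr) + \frac{1}{N^{2k}}\Bigr) \leq 2^{2k-1}\Bigl(\frac{K_0}{N^k} + \frac{1}{N^{2k}}\Bigr).
\end{align*}
Since $N \geq 1$ and $K_0 \geq 1$, the remainder term satisfies $1/N^{2k} \leq 1/N^k \leq K_0/N^k$, so the right-hand side is bounded by $2^{2k} K_0/N^k$, which is the desired inequality.

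The only place one has to be a little careful is making sure the constants line up as stated (in particular using $K_0 \geq 1$ to absorb the Hölder exponent, and the factor of $2$ from $K_0/N^k + 1/N^{2k} \leq 2K_0/N^k$ to reach exactly $2^{2k}K_0/N^k$). There is no genuine obstacle; the lemma is a bookkeeping step preparing the quantities $\nu(|R_{1,2}|^j)$ and $\nu(|R_{1,2}^-|^{2k})$ in the form required by the subsequent cavity-method recursion for Theorem~\ref{extra:thm3}.
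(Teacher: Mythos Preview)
Your proof is correct and follows essentially the same approach as the paper: H\"older/Jensen for the first bound (using $K_0\geq 1$ to absorb the exponent), and the relation $R_{1,2}^- = R_{1,2} - \sigma_N^1\sigma_N^2/N$ for the second. The only cosmetic difference is that the paper expands $|R_{1,2}^-|^{2k}$ by the binomial theorem and then invokes the first bound term by term, whereas you use the convexity inequality $(a+b)^{2k}\leq 2^{2k-1}(a^{2k}+b^{2k})$ directly; both routes land on the same constant $2^{2k}K_0/N^k$.
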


\begin{proof}
	The first inequality follows directly by H\"{o}lder's inequality and noting that $K_0\geq 1$. As for the second one, it can be established by noting that $R_{1,2}=R_{1,2}^-+\sigma_N^1\sigma_N^2/N$ and using the Binomial formula. Indeed,
	\begin{align*}
	\nu\bigl(|R_{1,2}^-|^{2k}\bigr)&\leq \sum_{j=0}^{2k}{2k\choose j}\frac{1}{N^{2k-j}}\nu\bigl(|R_{1,2}|^j\bigr)\\
	&\leq K_0\sum_{j=0}^{2k}{2k\choose j}\frac{1}{N^{2k-j}}\cdot \frac{1}{N^{j/2}}\\
	&\leq K_0\Bigl(\frac{1}{N}+\frac{1}{\sqrt{N}}\Bigr)^{2k}\\
	&\leq \frac{2^{2k}K_0}{N^k}.
	\end{align*}
\end{proof}

\begin{lemma}
	\label{extra:lem4}
	For any positive integer $l$, we have
	\begin{align*}
	\bigl|(R_{1,2}^-)^{l+1}-R_{1,2}^{l+1}\bigr|\leq \frac{l}{N}\bigl(|R_{1,2}^-|^{l}+|R_{1,2}|^l\bigr).
	\end{align*}
\end{lemma}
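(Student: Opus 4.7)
The proof will be a direct elementary inequality, with no spin-glass content entering at all. The first step is to observe that $R_{1,2}$ and $R_{1,2}^-$ differ by a single coordinate contribution: indeed $R_{1,2} = R_{1,2}^- + \sigma_N^1\sigma_N^2/N$, so $|R_{1,2}-R_{1,2}^-| = 1/N$. Writing $a := R_{1,2}^-$ and $b := R_{1,2}$, the problem reduces to showing the purely algebraic inequality $|a^{l+1} - b^{l+1}| \leq \frac{l}{N}(|a|^l + |b|^l)$ under the hypothesis $|a-b| = 1/N$.

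The plan is to use the classical factorization
\begin{align*}
a^{l+1} - b^{l+1} = (a-b)\sum_{j=0}^{l} a^j b^{l-j},
\end{align*}
take absolute values, and bound the resulting sum using the weighted AM--GM (Young's) inequality. Concretely, for each $0 \leq j \leq l$ with $l \geq 1$,
\begin{align*}
|a|^j |b|^{l-j} \leq \frac{j}{l}|a|^l + \frac{l-j}{l}|b|^l,
\end{align*}
and summing over $j$ gives
\begin{align*}
\sum_{j=0}^{l} |a|^j |b|^{l-j} \leq \frac{1}{l}\Bigl(\sum_{j=0}^{l} j\Bigr)|a|^l + \frac{1}{l}\Bigl(\sum_{j=0}^{l}(l-j)\Bigr)|b|^l = \frac{l+1}{2}\bigl(|a|^l + |b|^l\bigr).
\end{align*}

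Combining with $|a-b| = 1/N$, I will obtain
\begin{align*}
|a^{l+1} - b^{l+1}| \leq \frac{l+1}{2N}\bigl(|a|^l + |b|^l\bigr),
\end{align*}
and the proof concludes by the arithmetic observation that $(l+1)/2 \leq l$ whenever $l \geq 1$, which yields exactly the stated bound. There is no real obstacle here — the only choice of substance is which bound to apply to $\sum |a|^j|b|^{l-j}$, and Young's inequality gives the cleanest constant $(l+1)/2$, which comfortably beats $l$ in the relevant range.
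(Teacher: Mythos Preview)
Your proof is correct and follows essentially the same approach as the paper: the paper simply invokes the elementary inequality $|x^{l+1}-y^{l+1}|\leq l|x-y|(|x|^l+|y|^l)$ and applies it with $|R_{1,2}-R_{1,2}^-|=1/N$. You supply the missing justification for that inequality via the factorization and weighted AM--GM, and in fact obtain the slightly sharper constant $(l+1)/2$ in place of $l$.
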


\begin{proof}
	The proof follows simply by using $|x^{l+1}-y^{l+1}|\leq l|x-y|(|x|^l+|y|^l)$ for any $x,y\in \mathbb{R}$.
\end{proof}

\begin{proof}[\bf Proof of Theorem \ref{extra:thm3}: even $p\geq 4$] 
Let $0<b<\beta_p $ be fixed. We proceed by induction. Assume that there exists some $K_0\geq 1$ independent of $N\geq 1$ such that for any $\beta\in [0,b],$
\begin{align*}
\nu\bigl(R_{1,2}^{2j}\bigr)\leq \frac{K_0}{N^j},\,\,\forall N\geq 1
\end{align*}
for all $0\leq j\leq k.$ Our goal is to prove that there exists a constant $K_0'\geq 1$ such that for any $\beta\in[0,b],$
\begin{align*}
\nu\bigl( R_{1,2}^{2(k+1)}\bigr)\leq \frac{K_0'}{N^{k+1}},\,\,\forall N\geq 1.
\end{align*}
To this end, write by symmetry among sites,
\begin{align*}
\nu(R_{1,2}^{2k+2})&=\nu\bigl(\sigma_N^1\sigma_N^2R_{1,2}^{2k+1}\bigr).
\end{align*}
Using Lemma \ref{extra:lem4} gives
\begin{align*}
\nu\bigl(\sigma_N^1\sigma_N^2R_{1,2}^{2k+1}\bigr)&=\nu\bigl(\sigma_N^1\sigma_N^2\bigl(R_{1,2}^-\bigr)^{2k+1}\bigr)+\mathcal{E},
\end{align*}
where from the induction hypothesis and Lemma \ref{extra:lem3},
\begin{align*}
|\mathcal{E}|&\leq \frac{(2k+1)}{N}\bigl(\nu\bigl(R_{1,2}^{2k}\bigr)+\nu\bigl((R_{1,2}^-)^{2k}\bigr)\bigr)\leq \frac{2(2k+1)C_1}{N^{k+1}},
\end{align*}
where
$
C_1:=2^{2k}K_0.
$
Set $f=\sigma_N^1\sigma_N^2(R_{1,2}^-)^{2k+1}.$ We control $\nu(f)$ by using the mean value theorem,
\begin{align*}
%\label{extra:eq9}
|\nu(f)-\nu_0(f)|\leq \sup_{t\in[0,1]}|\nu_t'(f)|.
\end{align*}
Note that under the measure $\nu_0$, $\sigma_N^1,\ldots,\sigma_N^{4}$ are i.i.d. Bernoulli random variables on $\{-1,1\}$ with mean zero and are independent of $R_{\ell,\ell'}^-$. This implies that $\nu_0(f)=0.$ In addition, from Lemma \ref{extra:lem1},
\begin{align*}
\nu_t'(f)&=\frac{\beta^2}{2}\sum_{j=1}^p{p\choose j}\frac{1}{N^{j-1}}\Bigl[\nu_t\bigl(  \bigl(R_{1,2}^-\bigr)^{2k+1+p-j}\bigl(\sigma_N^1\sigma_N^{2}\bigr)^{j+1}\bigr)\\
&\qquad\qquad\qquad\qquad-2\sum_{\ell=1,2}\nu_t\bigl(\bigl(R_{1,2}^-\bigr)^{2k+1} \bigl(R_{\ell,3}^-\bigr)^{p-j}\bigl(\sigma_N^1\sigma_N^2\bigr)\bigl(\sigma_N^\ell\sigma_N^{3}\bigr)^{j}\bigr)\\
&\qquad\qquad\qquad\qquad+3\nu_t\bigl(\bigl(R_{1,2}^-\bigr)^{2k+1}  \bigl(R_{3,4}^-\bigr)^{p-j}\bigl(\sigma_N^1\sigma_N^2\bigr)\bigl(\sigma_N^{3}\sigma_N^{4}\bigr)^{j}\bigr)\Bigr].
\end{align*}
It follows by H\"{o}lder's inequality and Lemma \ref{extra:lem2} that
\begin{align}
\begin{split}\notag
&|\nu_t'(f)|\\
&\leq \frac{\beta^2}{2}\sum_{j=1}^p{p\choose j}\frac{1}{N^{j-1}}\Bigl[\nu_t\bigl( |R_{1,2}^-|^{2k+1+p-j}\bigr)
+ 4\nu_t\bigl(|R_{1,2}^-|^{2k+1}|R_{1,3}^-|^{p-j}\bigr)+3\nu_t\bigl(|R_{1,2}^-|^{2k+1}  |R_{3,4}^-|^{p-j}\bigr)\Bigr]\\
&\leq 4\beta^2\sum_{j=1}^p{p\choose j}\frac{1}{N^{j-1}}\nu_t\bigl( |R_{1,2}^-|^{2k+1+p-j}\bigr)
\end{split}\\
\begin{split}\label{extra:eq2}
&\leq 4\beta^2\exp\bigl (2^{p+2}\beta^2\bigr)\sum_{j=1}^p{p\choose j}\frac{1}{N^{j-1}}\nu\bigl( |R_{1,2}^-|^{2k+1+p-j}\bigr).
\end{split}
\end{align}
Here, write
\begin{align*}
\sum_{j=1}^p{p\choose j}\frac{1}{N^{j-1}}\nu\bigl( |R_{1,2}^-|^{2k+1+p-j}\bigr)=p\nu\bigl( |R_{1,2}^-|^{2k+p}\bigr)+\sum_{j=2}^p{p\choose j}\frac{1}{N^{j-1}}\nu\bigl( |R_{1,2}^-|^{2k+1+p-j}\bigr).
\end{align*}
To control the first term, we use Lemma \ref{extra:lem4} to get
\begin{align*}
\Bigl|\nu\bigl( |R_{1,2}^-|^{2k+p}\bigr)-\nu\bigl( |R_{1,2}|^{2k+p}\bigr)\Bigr|&\leq \frac{(2k+p)}{N}\bigl(\nu\bigl( |R_{1,2}^-|^{2k+p-1}\bigr)+\nu\bigl( |R_{1,2}|^{2k+p-1}\bigr)\bigr)\\
&\leq \frac{(2k+p)}{N}\bigl(\nu\bigl( |R_{1,2}^-|^{2k}\bigr)+\nu\bigl( |R_{1,2}|^{2k}\bigr)\bigr)\\
&\leq \frac{(2k+p)C_1}{N^{k+1}}.
\end{align*}
As for the second term, 
\begin{align*}
\sum_{j=2}^p{p\choose j}\frac{1}{N^{j-1}}\nu\bigl( |R_{1,2}^-|^{2k+1+p-j}\bigr)&\leq \frac{2^p}{N}\nu\bigl(|R_{1,2}^-|^{2k+1}\bigr)\leq \frac{2^p}{N}\nu\bigl(|R_{1,2}^-|^{2k}\bigr)\leq \frac{2^pC_1}{N^{k+1}}.
\end{align*}
From \eqref{extra:eq2}, the above two inequalities give
\begin{align*}
|\nu_t'(f)|&\leq C_2(\beta)\nu\bigl( |R_{1,2}|^{2k+p}\bigr)+\frac{C_3(\beta)}{N^{k+1}}.
\end{align*}
for
\begin{align*}
C_2(\beta)&:=4p\beta^2\exp\bigl (2^{p+2}\beta^2\bigr),\\
C_3(\beta)&:=4\beta^2\exp\bigl (2^{p+2}\beta^2\bigr)\bigl((2k+p)C_1p+2^pC_1\bigr).
\end{align*}
As a result,
\begin{align}
\begin{split}\notag
\nu(R_{1,2}^{2k+2})&=\nu(f)\\
&\leq \nu_0(f)+\sup_{t\in[0,1]}|\nu_t'(f)|\\
&\leq C_2(\beta)\nu\bigl( |R_{1,2}|^{2k+p}\bigr)+\frac{C_3(\beta)}{N^{k+1}}
\end{split}\\
\begin{split}
\label{extra:eq12}
&\leq C_2(\beta)\nu\bigl( |R_{1,2}|^{2k+3}\bigr)+\frac{C_3(\beta)}{N^{k+1}},
\end{split}
\end{align}
where the last inequality used the assumption that $p\geq 3.$
Note that $C_2$ and $C_3$ are nondecreasing functions of $\beta.$
Now we divide our discussion into two cases:
\smallskip

{\noindent\bf Case I:} Let $0<a\leq b$ be the largest number such that $C_2(a)\leq \frac{1}{2}$. From the above inequality, since
\begin{align*}
\nu(R_{1,2}^{2k+2})&\leq C_2(a)\nu\bigl( |R_{1,2}|^{2k+2}\bigr)+\frac{C_3(a)}{N^{k+1}},
\end{align*}
we see that for any $\beta\in[0,a]$,
\begin{align*}
\nu(R_{1,2}^{2k+2})\leq \frac{2C_3(a)}{N^{k+1}}.
\end{align*}

\smallskip

{\noindent \bf Case II:} Let $a$ be the constant we chose from Case I. Fix $\varepsilon>0$ such that $\varepsilon C_2(b)<1/2$. For this $\varepsilon,$ we use Theorem \ref{extra:thm2} to find a constant $K>0$ independent of $N$ such that for any $\beta\in[a,b]$,
\begin{align*}
%\label{extra:eq10}
\nu\bigl(I\bigl(|R_{1,2}|\geq \varepsilon\bigr)\bigr)\leq K\exp\Bigl(-\frac{N}{K}\Bigr),\,\,\forall N\geq 1.
\end{align*} 
Therefore,
\begin{align*}
\nu\bigl( |R_{1,2}|^{2k+3}\bigr)&\leq \nu\bigl( I\bigl(|R_{1,2}|\geq \varepsilon\bigr)\bigr)+\nu\bigl( |R_{1,2}|^{2k+3}I\bigl(|R_{1,2}|<\varepsilon\bigr)\bigr)\\
&\leq K\exp\Bigl(-\frac{N}{K}\Bigr)+\varepsilon\nu\bigl( R_{1,2}^{2k+2}\bigr).
\end{align*}
From \eqref{extra:eq12}, this implies that
\begin{align*}
\nu(R_{1,2}^{2k+2})&\leq C_2(b)K\exp\Bigl(-\frac{N}{K}\Bigr)+\varepsilon C_2(b)\nu\bigl( R_{1,2}^{2k+2}\bigr)+\frac{C_3(b)}{N^{k+1}}.
\end{align*}
Since $\varepsilon C_2(b)<1/2,$ this inequality deduces that for any $\beta\in[a,b],$
\begin{align*}
\nu(R_{1,2}^{2k+2})\leq 2\Bigl(C_2(b)K\exp\Bigl(-\frac{N}{K}\Bigr)+\frac{C_3(b)}{N^{k+1}}\Bigr).
\end{align*}
Combining the above two cases together implies the existence of a constant $K_0'$ independent of $N$ such that 
\begin{align*}
\nu(R_{1,2}^{2k+2})\leq \frac{K_0'}{N^{k+1}}
\end{align*}
for all $\beta\in[0,b]$ and $N\geq 1$.
This completes our proof.
\end{proof}

\begin{remark}\rm
We have seen in Case II that if $p\geq 3$, it is possible to obtain moment control for the overlap for big $\beta$ by using \eqref{extra:eq12} as long as we know the tail control of the overlap.	If $p=2$, \eqref{extra:eq12} loses its power and one must go further to consider the second derivative of $\nu(R_{1,2}^{2k+2})$ in order to obtain the moment control in the Sherrington-Kirkpatrick model. See \cite[Chapter 11]{Talbook2} for detail.
\end{remark}

\subsubsection{Odd $p$ case}

In this subsection, we establish Theorems \ref{extra:thm2} and \ref{extra:thm3} for odd $p\geq 3.$ It relies on the validity of the Guerra-Talagrand bound stated in the following proposition. Recall the coupled free energy $CF_{N,v}(\beta)$ from \eqref{ineq1}. For any $v\in [0,1]$ and $\varepsilon>0,$ define
\begin{align*}
CF_{N,v,\varepsilon}(\beta)&=\frac{1}{N}\log \sum\exp \frac{\beta}{\sqrt{2}} \bigl(H_N(\sigma^1)+H_N(\sigma^2)\bigr), 
\end{align*}
where the sum is over all $\sigma^1,\sigma^2\in \Sigma_N$ with $R_{1,2}\in [v-\varepsilon,v+\varepsilon].$ Recall  $\mathcal{L}_v(\gamma)$ from \eqref{rsb}.

\begin{proposition}\label{odd:lem2}
	Let $p\geq 3$ be an odd number. For any $v\in [0,1]$, we have
	\begin{align*}
	%\label{odd:lem2:eq1}
	\lim_{\varepsilon\downarrow 0}\limsup_{N\rightarrow\infty}\e CF_{N,v,\varepsilon}(\beta)&\leq \mathcal{L}_v(\gamma)
	\end{align*}	
	if $\gamma\in \mathcal{N}_v$ is a constant function on $[0,v].$ 
\end{proposition}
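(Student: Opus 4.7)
The plan is to establish the one-step Guerra--Talagrand upper bound for the coupled free energy by interpolation, using Panchenko's synchronization of the matrix-valued overlap to control the sign of the interpolation error in the odd $p$ case. The hypotheses that $\gamma$ is constant on $[0,v]$ and $v\in[0,1]$ both play essential roles: the former keeps the cavity cascade at a single Poisson--Dirichlet level, and the latter ($v\geq 0$) will pin the asymptotic matrix overlap into a region where $\xi(s)=s^p/2$ behaves convexly.

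The concrete steps are as follows. First, I add to the coupled Hamiltonian $\beta(H_N(\sigma^1)+H_N(\sigma^2))/\sqrt{2}$ a Panchenko-style perturbation of order $o(N)$ so that any subsequential limit of the Gibbs measure on pairs $(\sigma^1,\sigma^2)$ with $R_{1,2}\in[v-\varepsilon,v+\varepsilon]$ satisfies the Ghirlanda--Guerra identities with respect to the $2\times 2$ matrix overlap $R_{a,b}=(R^{\ell,\ell'}_{a,b})_{\ell,\ell'\in\{1,2\}}$ between two independent sampled pairs; this perturbation does not affect $\limsup_N\e CF_{N,v,\varepsilon}(\beta)$. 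Second, set $m:=\gamma(v)\in[0,2]$ and build a one-level Guerra interpolation $\phi_N(t)$, $t\in[0,1]$, using Ruelle probability cascades $(W_\alpha)$ of parameter $m$ and an independent Gaussian cavity field $Z_\alpha(\sigma)$ with covariance $\e Z_\alpha(\sigma)Z_{\alpha'}(\tau)=N\xi'(v)R_{\sigma,\tau}\indi_{\alpha=\alpha'}$, together with an appropriate linear counterterm, so that $\phi_N(1)$ recovers the perturbed coupled free energy and $\phi_N(0)$ equals $\mathcal{L}_v(\gamma)$ via the Cole--Hopf representation of $\Psi_\gamma$. Third, Gaussian integration by parts yields
\begin{align*}
\phi_N'(t)=-\frac{\beta^2}{2}\,\e\bigl\langle\,\Xi(R_{1,2})\,\bigr\rangle_t,
\end{align*}
where $\Xi(Q)$ is an explicit expression in the $2\times 2$ matrix $Q$ involving $\sum_{\ell,\ell'}\xi(Q^{\ell,\ell'})$, a linear term, and a $\xi(v)$-boundary correction, and $R_{1,2}$ here denotes the matrix overlap between two independent replicas of the pair.

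The main obstacle is to show $\Xi(Q)\geq 0$ for every $Q$ in the asymptotic support of the matrix overlap. For even $p$ this is routine from convexity of $\xi$ on $[-1,1]$, but for odd $p\geq 3$, $\xi$ is not convex on negative arguments and one must first locate that support. Here I invoke Panchenko's synchronization theorem: under the Ghirlanda--Guerra identities, the asymptotic matrix overlap is almost surely symmetric and positive semidefinite, and ultrametricity of the scalar overlap forces it to be a nondecreasing function along the ultrametric tree. Since the within-pair constraint pins the diagonal matrix at $\begin{pmatrix}1 & v\\ v & 1\end{pmatrix}$ with $v\geq 0$, monotonicity along the tree combined with positive semidefiniteness forces every entry of the off-diagonal $Q$ to lie in $[0,v]$. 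On this region $\xi''\geq 0$ and the even-$p$ convexity argument applies to give $\Xi(Q)\geq 0$, hence $\phi_N'(t)\leq o(1)$. Integrating from $t=0$ to $t=1$ and then sending $\varepsilon\downarrow 0$ yields the claimed inequality.
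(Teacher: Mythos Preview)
Your overall architecture matches the paper's: add a Panchenko-type perturbation, run Guerra's interpolation on the coupled system with a one-step replica symmetry breaking order parameter, and use synchronization to control the sign of the error term. The gap is in the final step, where you assert that ``monotonicity along the tree combined with positive semidefiniteness forces every entry of the off-diagonal $Q$ to lie in $[0,v]$.'' This is not what synchronization gives you. Panchenko's theorem (Theorem~4 in \cite{Panchenko2015}, quoted as Theorem~\ref{add:thm1} in the paper) yields only that the limiting $2\times 2$ matrix $Q_{1,2}$ is almost surely symmetric and positive semi-definite. The diagonal entries $Q_{1,2}^{1,1},Q_{1,2}^{2,2}$ (same-type overlaps between two pair-replicas) are in $[0,1]$, not $[0,v]$; and positive semi-definiteness alone does not rule out $Q_{1,2}^{1,2}<0$. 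Even if one appeals to the stronger synchronization statement that $a\mapsto\Phi(a)$ is nondecreasing in the positive semi-definite order, the inequality $\Phi(a)\preceq\begin{pmatrix}1&v\\v&1\end{pmatrix}$ does not imply entrywise bounds on $\Phi(a)$. So your reduction to ``all entries in $[0,v]$, hence $\xi''\geq 0$'' does not go through, and the convexity argument you invoke is unavailable.

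The paper handles the error term by a different mechanism. It proves the algebraic inequality (Lemma~\ref{odd:lem1}): for any $v\in[0,1]$ and any symmetric positive semi-definite $\begin{pmatrix}x&z\\z&y\end{pmatrix}$ with entries in $[-1,1]$, one has
\[
\Gamma(v,x)+\Gamma(v,y)+2\Gamma(v,z)\geq 0,\qquad \Gamma(v,s)=s^p-ps\,v^{p-1}+(p-1)v^p.
\]
The point is that for odd $p$ the individual term $\Gamma(v,z)$ can be negative when $z<0$, but the PSD constraint $|z|\leq\sqrt{xy}$ with $x,y\geq 0$ forces the sum to stay nonnegative. This is exactly the content of the error terms $E_{N,1}^{t,\theta}$ and $E_{N,2}^{t,\theta}$ in the paper, where all four $\rho^{\ell,\ell'}$'s are equal (to $0$ or to $v$). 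A second technical point: the paper does not use a pure one-level cascade but a two-level one with parameters $0<m_1<m_2<1$ and then sends $m_2\uparrow 1$. The reason is that at the top level the four $\rho_2^{\ell,\ell'}$'s are \emph{not} all equal (they are $1,v,v,1$), so Lemma~\ref{odd:lem1} does not apply to $E_{N,3}^{t,\theta}$; instead one bounds $|E_{N,3}^{t,\theta}|\leq 4p(1-m_2)$ via \eqref{odd:eq2} and lets $m_2\to 1$. Your one-level setup would face this same mixed-$\rho$ term with nonvanishing weight, which you have not addressed.
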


As we mentioned before, for even $p\geq 2$ the Guerra-Talagrand bound \eqref{rsb} is valid for all $\gamma\in \mathcal{N}_v$, but it is not clear if the same inequality holds for odd $p.$ Nevertheless, this proposition states that the Guerra-Talagrand bound is true in the limit $N\rightarrow\infty$ as long as we consider $v\in [0,1]$ and take only constant $\gamma$'s. This choice of $\gamma$ is called the one-step replica symmetry breaking functional order parameter in spin glass literature. We now turn to the proof of Theorems  \ref{extra:thm2} and \ref{extra:thm3}. We defer the proof of Proposition \ref{odd:lem2} to Subsection \ref{final}.

\begin{proof}[\bf Proof of Theorem \ref{extra:thm2}: odd $p\geq 3$] Let $0<a<b<\beta_p $.
	Let $\varepsilon_0>0$ be fixed. Recall that in the argument for Theorem \ref{extra:thm2} with even $p$, we have that from \eqref{ineq2}, there exists some  $\delta>0$ such that
	\begin{align*}
	\inf_{0\leq \lambda\leq 1}\mathcal{L}_v((1+\lambda)\alpha_P)<2\mathcal{P}_\beta(\alpha)-\delta
	\end{align*}
	for all $v\in [\varepsilon_0,1]$ and $\beta\in [a,b].$ As this part of the argument does not rely on whether $p$ is odd or even, the same inequality is still valid for odd $p$. Since $(1+\lambda)\alpha_P\equiv 1+\lambda$ on $[0,v]$, it follows from Proposition \ref{odd:lem2} that
	\begin{align*}
	\lim_{\varepsilon\downarrow 0}\limsup_{N\rightarrow\infty}\e CF_{N,v,\varepsilon}(\beta)\leq 2\mathcal{P}_\beta(\alpha_P)-\delta
	\end{align*}
	for any $v\in [\varepsilon_0,1]$ and $\beta\in [a,b].$ Since the left-hand side of this inequality is in the logarithmic scale, from the sub-Gaussian concentration inequality for the free energies \cite[Proposition 9]{Chen14}, it can be argued by a covering argument that for any $\beta\in[a,b]$,
	\begin{align*}
	\limsup_{N\rightarrow\infty}\frac{1}{N}\e\log \sum_{R_{1,2}\in [\varepsilon_0,1]}\frac{1}{2^{2N}}\exp \frac{\beta}{\sqrt{2}}\bigl(H_N(\sigma^1)+H_N(\sigma^2)\bigr)\leq 2\mathcal{P}_\beta(\alpha_P)-\frac{\delta}{2}.
	\end{align*}
	Now using the uniform convergence of the free energies for $\beta\in [a,b]$ on both sides and the Parisi formula, there exists a constant $\delta'>0$ such that
	\begin{align*}
	\frac{1}{N}\e\log \sum_{R_{1,2}\in [\varepsilon_0,1]}\frac{1}{2^{2N}}\exp \frac{\beta}{\sqrt{2}}\bigl(H_N(\sigma^1)+H_N(\sigma^2)\bigr)\leq 2\e F_N(\beta)-\delta'
	\end{align*}
	for all $N\geq 1$ and $\beta\in[a,b].$ From this, we can argue by another application of the sub-Gaussian concentration inequality for the free energies in an identical manner as the last part of the proof of Theorem \ref{extra:thm2} with even $p$ to get that there exists some positive constant $K$ such that
	\begin{align*}
	\e \bigl\la I(R_{1,2}\in [\varepsilon_0,1])\bigr\ra_\beta\leq Ke^{-N/K}
	\end{align*}
	for any $N\geq 1$ and $\beta\in[a,b],$ where $\la\cdot\ra_\beta$ is the Gibbs average with respect to the product of the Gibbs measure $G_{N,\beta}$ in \eqref{gibbs}. 
	It remains to show that there exists some $K'>0$ such that 
	\begin{align}\label{odd:eq5}
	\e \bigl\la I(R_{1,2}\in [-1,-\varepsilon_0])\bigr\ra_\beta\leq K'e^{-N/K'}
	\end{align}
	for any $N\geq 1$ and $\beta\in[a,b].$ To see this, from Jensen's inequality,
	\begin{align*}
	&\frac{1}{N}\e\log \sum_{R_{1,2}\in [-1,-\varepsilon_0]}\frac{1}{2^{2N}}\exp \frac{\beta}{\sqrt{2}}\bigl(H_N(\sigma^1)+H_N(\sigma^2)\bigr)\\
%	&\leq \frac{1}{N}\log\sum_{R_{1,2}\in[-1,-\varepsilon_0]}\frac{1}{2^{2N}}\e \exp \frac{\beta}{\sqrt{2}}\bigl(H_N(\sigma^1)+H_N(\sigma^2)\bigr)\\
	&\leq\frac{1}{N}\log\sum_{R_{1,2}\in[-1,-\varepsilon_0]}\frac{1}{2^{2N}}\exp \frac{\beta^2N}{4}\bigl(2+2R_{1,2}^p\bigr)\leq \frac{\beta^2}{4}\bigl(2-2\varepsilon_0^p\bigr),
	\end{align*}
	where the last inequality used the fact that $p$ is odd.
	Since $\lim_{N\rightarrow\infty}F_N(\beta)=\beta^2/4$ for $\beta\in[a,b],$ a similar argument as above gives \eqref{odd:eq5}.
\end{proof}

\begin{proof}[\bf Proof of Theorem \ref{extra:thm3}: odd $p\geq 3$]
	Note that given the validity of Theorem \ref{extra:thm2}, the assumption of $p$ being even is not important in the proof of Theorem \ref{extra:thm3} for even $p$, so the statement of Theorem \ref{extra:thm3} for odd $p$ remains valid by the same cavity argument and using Theorem \ref{extra:thm2} for odd $p.$
\end{proof}

\subsection{Proof of Proposition \ref{prop1}}

Recall that $R_{1,2}$ is the overlap between two i.i.d. samplings $\sigma^1$ and $\sigma^2$ from the Gibbs measure \eqref{gibbs} and $\la \cdot\ra_\beta$ is the Gibbs average with respect to these random variables.

\begin{lemma}
	\label{extra:lem9}
	For any $p\geq 2$ and $\beta\geq 0$, we have
	\begin{align*}
	\p\Bigl(\Bigl|F_N(\beta)-\frac{\beta^2}{4}\Bigr|\geq r\Bigr)&\leq \frac{2}{r^2N}\e \bigl\la R_{1,2}^p\bigr\ra_\beta+\frac{1}{2r^2}\Bigl(\int_0^\beta l\e \bigl\la R_{1,2}^p\bigr\ra_{l}dl\Bigr)^2,\quad\forall r>0.
	\end{align*}
\end{lemma}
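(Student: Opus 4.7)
The plan is to bound $\e(F_N(\beta)-\beta^2/4)^2$ and apply Chebyshev's inequality. Writing $F_N(\beta)-\beta^2/4=\bigl(F_N(\beta)-\e F_N(\beta)\bigr)+\bigl(\e F_N(\beta)-\beta^2/4\bigr)$, the cross term vanishes in expectation, so
$$\e\Bigl(F_N(\beta)-\frac{\beta^2}{4}\Bigr)^2=\mathrm{Var}(F_N(\beta))+\Bigl(\e F_N(\beta)-\frac{\beta^2}{4}\Bigr)^2.$$
Chebyshev then reduces matters to separately estimating the variance and the (squared) bias, and I would match each of the two resulting terms to the corresponding term on the right-hand side of the lemma.

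For the bias, I would differentiate $\e F_N(\beta)$ in $\beta$ and apply Gaussian integration by parts on the i.i.d.\ standard Gaussians $g_{i_1,\ldots,i_p}$. Using the identity $\sum_{i_1,\ldots,i_p}\la\sigma_{i_1}\cdots\sigma_{i_p}\ra_\beta^2=N^p\la R_{1,2}^p\ra_\beta$ one obtains
$$\frac{d}{d\beta}\e F_N(\beta)=\frac{\beta}{2}\bigl(1-\e\la R_{1,2}^p\ra_\beta\bigr).$$
Integrating from $0$ to $\beta$ with $F_N(0)=0$ then gives
$$\e F_N(\beta)-\frac{\beta^2}{4}=-\frac{1}{2}\int_0^\beta l\,\e\la R_{1,2}^p\ra_l\,dl,$$
whose square contributes the second term on the right-hand side of the lemma (in fact with a prefactor of $1/4$, comfortably absorbed by $1/2$).

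For the variance, I would apply the Gaussian Poincar\'e inequality to $F_N(\beta)$ viewed as a smooth function of $(g_{i_1,\ldots,i_p})$. A direct computation shows
$$\sum_{i_1,\ldots,i_p}\Bigl(\frac{\partial F_N(\beta)}{\partial g_{i_1,\ldots,i_p}}\Bigr)^2=\frac{\beta^2}{2N}\la R_{1,2}^p\ra_\beta,$$
so $\mathrm{Var}(F_N(\beta))\leq \frac{\beta^2}{2N}\e\la R_{1,2}^p\ra_\beta$. Combining this with the bias identity and Chebyshev produces an inequality of exactly the shape stated.

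The main technical subtlety is calibrating the constant in the variance estimate: the Poincar\'e bound above carries a $\beta^2/2$ factor, whereas the lemma states a flat constant $2$. To match the exact constant, the cleanest route is to work at the level of the derivative $V'(\beta):=\mathrm{Var}(F_N(\beta))'$, compute it by two successive Gaussian integrations by parts to obtain an expression of the form $\frac{\beta}{N}\e\la R_{1,2}^p\ra_\beta$ minus a covariance term involving $F_N(\beta)$ and $\la R_{1,2}^p\ra_\beta$, and then integrate after controlling the sign of that covariance. Once the variance and bias bounds are in hand, the lemma is immediate from Chebyshev's inequality.
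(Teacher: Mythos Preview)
Your approach is essentially the paper's: Chebyshev, then split into variance plus squared bias, bound the variance by the Gaussian Poincar\'e inequality, and compute the bias by Gaussian integration by parts on $\tfrac{d}{d\beta}\e F_N(\beta)$. The only cosmetic difference is that you use the exact identity $\e(X-c)^2=\mathrm{Var}(X)+(\e X-c)^2$, whereas the paper applies the cruder $(a+b)^2\le 2a^2+2b^2$; your route is slightly sharper.

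Your final paragraph, however, is unnecessary and somewhat misguided. You do not need to remove the $\beta^2$ from the Poincar\'e bound: the paper's own proof keeps it, obtaining $\mathrm{Var}(F_N(\beta))\le \tfrac{\beta^2}{N}\e\la R_{1,2}^p\ra_\beta$ (your $\tfrac{\beta^2}{2N}$ is the same computation, just tracking the $1/\sqrt{2}$ in the Hamiltonian). The stated constant $2$ in the first term of the lemma is simply what comes out after multiplying by the factor $2$ from $(a+b)^2\le 2(a^2+b^2)$; the $\beta^2$ is implicitly there and in any case is harmless for the only application (Proposition~\ref{prop1}), where $\beta$ ranges over a bounded interval $[0,b]$. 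There is no need to differentiate the variance or control covariance signs---just plug the Poincar\'e bound and the bias identity into Chebyshev and you are done.
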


\begin{proof}
	For any $r>0,$  Chebyshev's inequality yields
	\begin{align}
	\begin{split}
	\notag
	\p\Bigl(\Bigl|F_N(\beta)-\frac{\beta^2}{4}\Bigr|\geq r\bigr)&\leq \frac{1}{r^2}\e\Bigl(F_N(\beta )-\frac{\beta ^2}{4}\Bigr)^2
	\end{split}\\
	\begin{split}
	\label{extra:lem9:proof:eq2}
	&\leq \frac{2}{r^2}\Bigl(\mbox{Var}(F_N(\beta ))+\Bigl(\e F_N(\beta )-\frac{\beta ^2}{4}\Bigr)^2\Bigr),
	\end{split}
	\end{align}
	where the second inequality used the trivial bound $(a+b)^2\leq 2a^2+2b^2$. The two terms in the last inequality can be controlled as follows. First, from the Poincar\'e inequality for Gaussian measure, since $$\partial_{g_{i_1,\ldots,i_p}}F_N(\beta )=\frac{\beta}{N^{(p+1)/2}}\bigl\la \sigma_{i_1}\cdots\sigma_{i_p}\bigr\ra_\beta ,$$
	we have
	\begin{align}\label{extra:lem9:proof:eq1}
	\mbox{Var}(F_N(\beta ))&\leq \e\sum_{1\leq i_1,\ldots,i_p\leq N}\Bigl(\frac{\beta}{N^{(p+1)/2}}\bigl\la \sigma_{i_1}\cdots\sigma_{i_p}\bigr\ra_\beta \Bigr)^2=\frac{\beta^2}{N}\e\bigl\la R_{1,2}^p\bigr\ra_\beta .
	\end{align}
	On the other hand, using Gaussian integration by parts, 
	\begin{align*}
	\frac{d}{d\beta }\e F_N(\beta )&=\frac{1}{\sqrt{2}}\e \Bigl\la \frac{H_N(\sigma)}{N}\Bigr\ra_\beta =\frac{\beta }{2}\bigl(1-\e \bigl\la R_{1,2}^p\bigr\ra_\beta \bigr)
	\end{align*}
	and thus,
	\begin{align}
	\begin{split}\notag
	\e F_N(\beta )&=\e F_N(0)+\int_0^\beta  \frac{d}{dl}\e F_N(l)dl\\
	&=\int_0^\beta \frac{l}{2}\bigl(1-\e \bigl\la R_{1,2}^p\bigr\ra_l\bigr)dl
	\end{split}\\
	\begin{split}
	\label{extra:lem9:proof:eq3}
	&=\frac{\beta ^2}{4}-\frac{1}{2}\int_0^\beta  l\e \bigl\la R_{1,2}^p\bigr\ra_ldl,
	\end{split}
	\end{align}
	which implies 
	\begin{align*}
	\Bigl(\e F_N(\beta )-\frac{\beta ^2}{4}\Bigr)^2&=\frac{1}{4}\Bigl(\int_0^\beta  l\e \bigl\la R_{1,2}^p\bigr\ra_ldl\Bigr)^2.
	\end{align*}
	This together with \eqref{extra:lem9:proof:eq2} and \eqref{extra:lem9:proof:eq1} clearly completes our proof.
\end{proof}

\begin{proof}[\bf Proof of Proposition \ref{prop1}] Let $0<b<\beta_p $ be fixed. From Theorem \ref{extra:thm3}, there exists a constant $K$ such that 
	\begin{align*}
	\e \bigl\la |R_{1,2}|^p\ra_\beta\leq \frac{K}{N^{p/2}}
	\end{align*}
	for any $N\geq 1$ and $0\leq \beta\leq b.$ Here we used Jensen's inequality in the case that $p$ is odd. From Lemma \ref{extra:lem9}, for any $r>0,$
	\begin{align*}
		\p\Bigl(\Bigl|F_N(\beta)-\frac{\beta^2}{4}\Bigr|\geq r\Bigr)&\leq \frac{2K}{r^{2}N^{p/2+1}}+\frac{1}{2r^{2}}\Bigl(\int_0^\beta \frac{lK}{N^{p/2}}dl\Bigr)^2\\
		&=\frac{2K}{r^{2}N^{p/2+1}}+\frac{\beta^4K^2}{8r^{2}N^{p}}\\
		&\leq \frac{1}{r^2N^{p/2+1}}\Bigl(2K+\frac{b^4K^2}{8}\Bigr).
	\end{align*}
	This completes our proof.
\end{proof}

\subsection{Proof of Proposition \ref{odd:lem2}}\label{final}

This subsection is devoted to establishing Proposition \ref{odd:lem2}. We briefly outline our argument as follows. Our approach is based on an interpolation of the coupled free energy $CF_{N,v,\varepsilon}(\beta)$ via Guerra's replica symmetry breaking scheme. In the case of the mixed even $p$-spin models, the same interpolation was also considered in Talagrand \cite{Talbook2}, where it was understood that the error term in the derivative of the interpolated coupled free energy consists of a sum of four main parts, in terms of the overlap matrix and the functional order parameter. In this case, it can be easily checked (see Remark \ref{rmk4}) that each of the four parts in the error term is nonnegative, which critically determines the validity of the Guerra-Talagrand inequality. In our case, the model involves purely odd $p$-spin interactions. Due to the oddness of $p$, it is not clear that the error term remains nonnegative. To overcome this difficulty, we  add a vanishing perturbation of higher order spin interactions to the coupled systems in the same manner as that performed in Panchenko \cite{Panchenko2015}. On the one hand, this will not influence the coupled free energy in the thermodynamic limit. On the other hand, this perturbation will be strong enough such that the system satisfies the Ghirlanda-Guerra identities and then leads to the ultrametricity of the overlaps \cite{PanUltra}. One of the important consequences of the ultrametricity, discovered by Panchenko \cite{PanMultipSP,Panchenko2015}, is that the entries in the overlap matrix will be synchronized in the sense that the whole overlap matrix becomes asymptotically symmetric and positive semi-definite. From these, if we in particular choose the functional order parameters to be of one-step replica symmetry breaking, the four main parts together in the error term exhibits a nonnegative sign and gives the desired upper bound with this choice of the order parameters.  

We now turn to our proof. Throughout the remainder of this subsection, let $p\geq 3$ be a fixed odd number. The following lemma is crucial to our argument. Define
\begin{align*}
\Gamma(v,x)&=x^p-pxv^{p-1}+(p-1)v^{p}
\end{align*}
for $v\in[0,1]$ and $x\in [-1,1].$

\begin{figure}[h]
	\centering
	\includegraphics[width=0.5\textwidth]{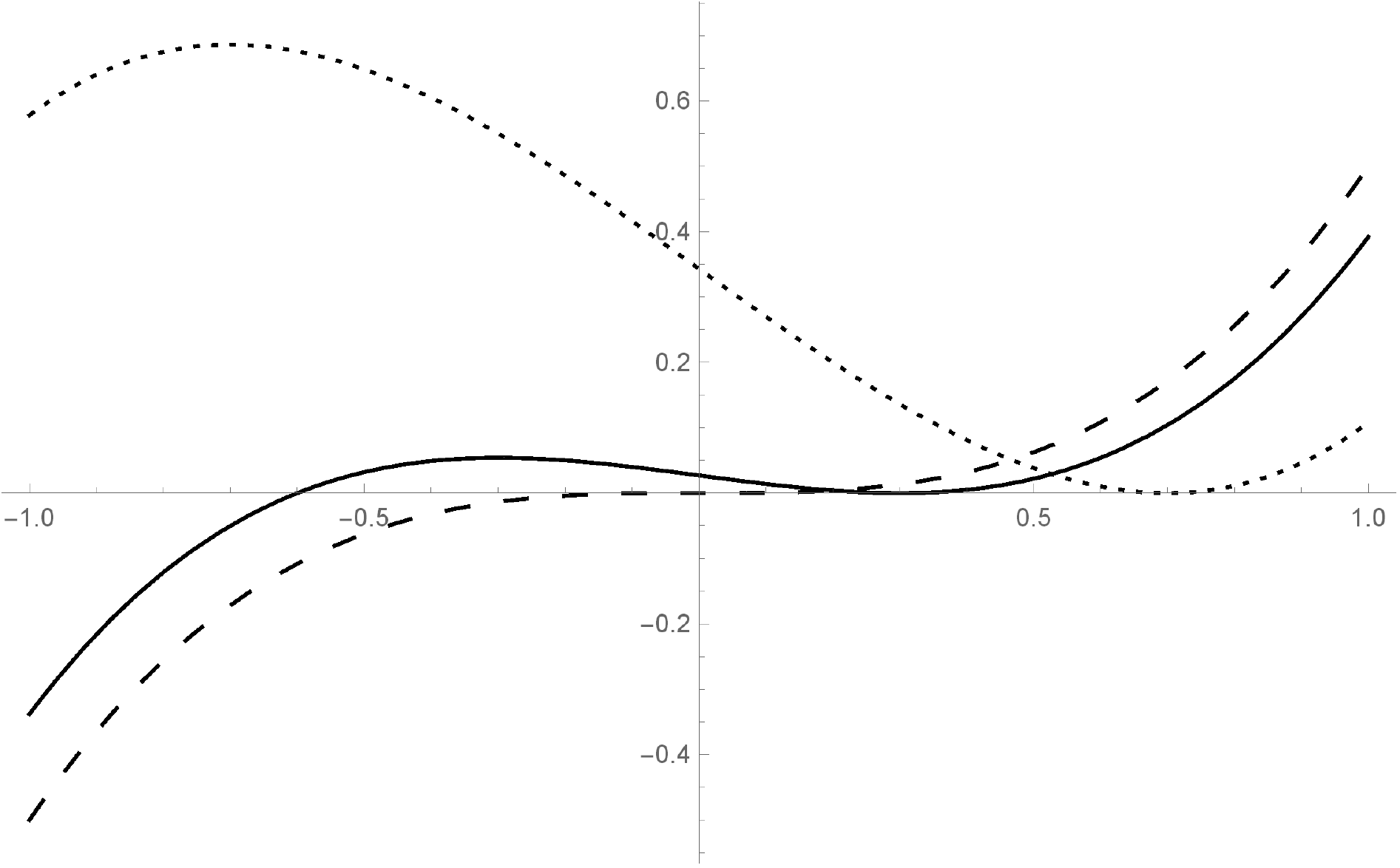}
	\caption{Graph for $\Gamma(v,\cdot)$ with $p=3$ and $v=0$ (dashed line), $=0.3$ (solid line), $=0.7$ (dotted line)}
	\label{fig:mesh1}
\end{figure}

\begin{lemma}\label{odd:lem1}
	For any $x,y,z\in [-1,1]$, if the matrix
	\begin{align}\label{odd:lem1:eq1}
	\left[
	\begin{array}{cc}
	x&z\\
	z&y
	\end{array}\right]
	\end{align}
	is positive semi-definite,
	then
	$
	\Gamma(v,x)+\Gamma(v,y)+2\Gamma(v,z)\geq 0
	$
	for any $v\in [0,1].$
\end{lemma}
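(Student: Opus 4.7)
The hypothesis that the matrix \eqref{odd:lem1:eq1} is positive semi-definite is equivalent to $x,y \geq 0$ together with $z^2 \leq xy$, so the admissible range of $z$ is the interval $[-\sqrt{xy}, \sqrt{xy}]$. My plan is to freeze $x,y,v$ and treat $\phi(z) := \Gamma(v,x) + \Gamma(v,y) + 2\Gamma(v,z)$ as a one-variable function of $z$, then show that its minimum on this interval is non-negative. A preliminary observation I will use repeatedly is that $\Gamma(v,\cdot)$ attains its minimum on $[0,\infty)$ at the point $v$ where it vanishes (since $\partial_s\Gamma(v,s)=p(s^{p-1}-v^{p-1})$ vanishes only at $s=v$ for $s\geq 0$), so $\Gamma(v,s)\geq 0$ whenever $s\geq 0$ and $v\geq 0$.

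A direct computation gives $\phi'(z) = 2p(z^{p-1} - v^{p-1})$, which equals $2p(|z|^{p-1} - v^{p-1})$ since $p-1$ is even. Hence $\phi$ is increasing on $(-\infty,-v)$, decreasing on $(-v,v)$, and increasing on $(v,\infty)$, so the minimum of $\phi$ on $[-\sqrt{xy},\sqrt{xy}]$ is attained at one of the points $z=\sqrt{xy}$, $z=v$ (only when $v\leq\sqrt{xy}$), or $z=-\sqrt{xy}$. The first two cases are immediate from the preliminary observation: $\phi(v)=\Gamma(v,x)+\Gamma(v,y)\geq 0$ since $x,y\geq 0$, and $\phi(\sqrt{xy})\geq 0$ since all three arguments $x,y,\sqrt{xy}$ are non-negative.

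The only substantive case, and the main obstacle, is $z=-\sqrt{xy}$, which can occur only when $\sqrt{xy}\geq v$. Here I would introduce $u=\sqrt{x}$, $w=\sqrt{y}$ so that $\sqrt{xy}=uw$, and use the oddness of $p$ to rewrite, after expansion,
\begin{align*}
\phi(-uw) \;=\; (u^p - w^p)^2 \;-\; pv^{p-1}(u-w)^2 \;+\; 4(p-1)v^p.
\end{align*}
The difficulty is that the middle term is negative, so I must absorb it into the first. Factoring $u^p - w^p = (u-w)\sum_{i=0}^{p-1} u^i w^{p-1-i}$ and applying AM--GM to the sum yields $\sum_{i=0}^{p-1}u^i w^{p-1-i}\geq p(uw)^{(p-1)/2}$, and the hypothesis $uw=\sqrt{xy}\geq v$ promotes this to $\sum\geq pv^{(p-1)/2}$. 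Squaring gives $(u^p-w^p)^2\geq p^2 v^{p-1}(u-w)^2\geq pv^{p-1}(u-w)^2$, so $\phi(-uw)\geq 4(p-1)v^p\geq 0$, finishing the proof.
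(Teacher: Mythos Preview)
Your proof is correct, and it takes a genuinely different route from the paper's.

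The paper fixes $f(\cdot)=\Gamma(v,\cdot)$ and argues qualitatively: $f$ is convex on $[0,1]$ and nondecreasing on $[-1,-v]$, so in the only nontrivial case ($z$ deep in the negative region) one uses convexity to get $f(x)+f(y)\geq 2f\bigl(\tfrac{x+y}{2}\bigr)$ and monotonicity to get $f(z)\geq f\bigl(-\tfrac{x+y}{2}\bigr)$, reducing everything to the identity $f(c)+f(-c)=2(p-1)v^p$. Your approach instead minimizes $\phi(z)$ directly over the admissible interval, identifies the only delicate candidate as $z=-\sqrt{xy}$ with $\sqrt{xy}\geq v$, and then handles it by the substitution $u=\sqrt{x}$, $w=\sqrt{y}$ together with the factorization $u^p-w^p=(u-w)\sum_{i=0}^{p-1}u^iw^{p-1-i}$ and AM--GM on that sum. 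Interestingly, both arguments land on the same lower bound $4(p-1)v^p$ in the critical case. Your method is more algebraic and self-contained (it never appeals to convexity or to locating a zero $x_0$ of $f$), while the paper's method is more geometric and makes the role of the shape of $\Gamma(v,\cdot)$ explicit; each is a clean one-page argument.
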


\begin{proof} 
	Let $v\in[0,1]$ be fixed. For convenience, set
	\begin{align*}
	f(x)&=\Gamma(v,x).
	\end{align*} 
	Let us begin by stating some properties of $f$ (see Figure \ref{fig:mesh1}), which can be easily verified by a direct computation. First, since $p$ is odd, $f$ is convex on $[0,1]$ and concave on $[-1,0].$ In addition, $f\geq 0$ on $[0,1];$ on the interval $[-1,0],$ either $f\geq 0$ on $[-1,0]$ or there exists a unique $x_0\geq v$ such that $f\leq 0$ on $[-1,-x_0]$ and $f\geq 0$ on $[-x_0,0].$ Finally, $f'\geq 0$ on $[-1,-v].$

	Next, we turn to the proof of the announced inequality. Note that the positive semi-definiteness of \eqref{odd:lem1:eq1} ensures $x,y\geq 0$ and $|z|\leq \sqrt{xy}.$ From the above stated properties of $f$, it suffices to assume that $x_0$ exists and $z\leq-x_0.$ Since $f$ is convex on $[0,1]$ and $x,y\geq 0,$ 
	\begin{align*}
	f\Bigl(\frac{x+y}{2}\Bigr)\leq \frac{f(x)+f(y)}{2}.
	\end{align*}
	Also since $f'\geq 0$ on $[-1,-x_0]$ and $-(x+y)/2\leq -\sqrt{xy}\leq z$, we see that $$
	f(z)\geq f(-\sqrt{xy})\geq f\Bigl(-\frac{x+y}{2}\Bigr).$$
	From these,
	\begin{align*}
	f(x)+f(y)+2f(z)&\geq 2\Bigl(f\Bigl(\frac{x+y}{2}\Bigr)+f\Bigl(-\frac{x+y}{2}\Bigr)\Bigr).
	\end{align*}
	Our proof is then completed by noting that $f(c)+f(-c)\geq 0$ for any $c\in[0,1].$ This holds because $p$ is odd and this yields
	\begin{align*}
	f(c)+f(-c)&=c^p-pcv^{p-1}+(p-1)v^{p}+\bigl(-c^p+pcv^{p-1}+(p-1)v^{p}\bigr)=2(p-1)v^p\geq 0.
	\end{align*}
\end{proof}

To establish the proof of Proposition \ref{odd:lem2}, we first make some preparations in four steps.  

\smallskip

{\noindent \bf Step 1: Perturbation.} We construct a Hamiltonian of smaller order in the same fashion as that in \cite{Panchenko2015}. It will be used later as a perturbation along Guerra's interpolation. Let $(\lambda_l)_{l\geq 1}$ be a numeration of $\mathbb{Q}\cap[-1,1].$  For $\sigma\in \Sigma_N$ and $e=(i_1,\ldots,i_r)\in \{1,\ldots,N\}^r$,  denote
\begin{align*}
\sigma_{e}=\sigma_{i_1}\cdots\sigma_{i_r}.
\end{align*}
For any $\sigma^1,\sigma^2\in \Sigma_N$ and $l,{l'}\geq 1$, we set
\begin{align*}
S_{r,I}^{l,l'}(\sigma^1,\sigma^2)&=\bigl(\lambda_l\sigma_{e_1}^1+\lambda_{l'}\sigma_{e_1}^2\bigr)\cdots\bigl(\lambda_l\sigma_{e_n}^1+\lambda_{l'}\sigma_{e_n}^2\bigr)
\end{align*}
for any $I=(e_1,\ldots, e_n)\in \bigl(\{1,\ldots,N\}^r\bigr)^n.$ For $m,n_1,\ldots,n_m,l_1,l_1',\ldots,l_m,l_m'\geq 1,$ denote by 
\begin{align}\label{index}
\omega&=\bigl(m,n_1,\ldots,n_m,l_1,l_1',\ldots,l_m,l_m'\bigr).
\end{align}
Set 
\begin{align*}
X_{N}^\omega\bigl(\sigma^1,\sigma^2\bigr)&=\frac{1}{N^{r(n_1+\cdots+n_m)/2}}\sum_{I_1,\ldots,I_m}g_{r,I_1,\ldots,I_m}^{l_1,l_1',\ldots,l_m,l_m'}\prod_{j=1}^mS_{r,I_j}^{l_j,l_j'}(\sigma^1,\sigma^2),
\end{align*}
where $I_j=(e_1,\ldots,e_{n_j})\in \bigl(\{1,\ldots,N\}^r\bigr)^{n_j}$ for $j=1,\ldots,m$ and $g_{r,I_1,\ldots,I_m}^{l_1,l_1',\ldots,l_m,l_m'}$ are standard normal random variables and are independent of all indices and other randomness. For any $(\sigma^1(k),\sigma^2(k))$ and $(\sigma^1(k'),\sigma^2(k'))\in\Sigma_N^2$, set the overlap matrix $Q_{k,k'}$ by
\begin{align*}
Q_{k,k'}=\left[
\begin{array}{cc}
Q_{k,k'}^{1,1}&Q_{k,k'}^{1,2}\\
Q_{k,k'}^{2,1}&Q_{k,k'}^{2,2}
\end{array}\right]:=
\left[
\begin{array}{cc}
R\bigl(\sigma^1(k),\sigma^{1}(k')\bigr)&R\bigl(\sigma^1(k),\sigma^{2}(k')\bigr)\\
R\bigl(\sigma^2(k),\sigma^{1}(k')\bigr)&R\bigl(\sigma^2(k),\sigma^{2}(k')\bigr)
\end{array}\right],
\end{align*}
where $R(\sigma,\sigma'):=N^{-1}\sum_{i=1}^N\sigma_i\sigma_i'$ for any $\sigma,\sigma'\in \Sigma_N.$
The covariance of $X_{N}^\omega $ can be computed through 
\begin{align}\label{add:eq1}
\e X_N^\omega \bigl(\sigma^1(k),\sigma^2(k)\bigr)X_{N}^\omega (\sigma^1(k'),\sigma^2(k')\bigr)&=\prod_{j=1}^m\bigl\la Q_{k,k'}^{\circ r}\lambda_{l_j,l_j'},\lambda_{l_j,l_j'}\bigr\ra^{n_j},
\end{align}
where $Q_{k,k'}^{\circ r}$ is the Hadamard product of $Q_{k,k'}$ for $r$ times, 
$\lambda_{l,l'}$ is the column vector $(\lambda_{l},\lambda_{l'}),$ and $\la \cdot,\cdot\ra$ means the usual dot product. Note that \eqref{add:eq1} is bounded by $4^{n_1+\cdots+n_m}.$ Let $\theta=(\theta_\omega )_{\omega}$ be a sequence of i.i.d. random variables sampled uniformly at random from the interval $[1,2]$ indexed by $\omega$ defined in \eqref{index}. From these, set the perturbation Hamiltonian as
\begin{align*}
X_{N}^{\theta}\bigl(\sigma^1,\sigma^2\bigr)&=\sum_{\omega}\frac{\theta_\omega }{8^{|\omega|}}X_{N}^\omega \bigl(\sigma^1,\sigma^2\bigr),
\end{align*}
where $|\omega|$ is the sum of all entries in the index $\omega.$
Then conditioning on $\theta$, the covariance of $X_{N}^{\theta}\bigl(\sigma^1(k),\sigma^2(k)\bigr)$ and $X_{N}^{\theta}\bigl(\sigma^1(k'),\sigma^2(k')\bigr)$ is given by
\begin{align*}
\sum_{\omega}\frac{(\theta_{\omega})^2}{64^{|\omega|}}\prod_{j=1}^m\bigl\la Q_{k,k'}^{\circ r}\lambda_{l_j,l_j'},\lambda_{l_j,l_j'}\bigr\ra^{n_j}.
\end{align*}

\smallskip

{\noindent \bf Step 2: Guerra's Interpolation scheme.} In what follows, we introduce Guerra's interpolation scheme for the coupled free energy.  Fix $v\in [0,1]$. Let $m_0=0<m_1<m_2<m_3=1.$ Let $(d_j)_{j\in \mathbb{N}}$ be the non-decreasing rearrangement of a Poisson point process of intensity measure $x^{-m_1-1}dx$. For each $j_1$, let $(d_{j_1,j})_{j\in \mathbb{N}}$ be a non-decreasing rearrangement of a Poisson point process of intensity $x^{-m_2-1}dx.$ These are all independent of each other for all $j_1$ and of $(d_j).$ For each $\eta=(j_1,j_2)\in \mathbb{N}^2,$ we define $$d_\eta^*=d_{j_1}d_{j_1,j_2}.$$
The Ruelle probability cascades is defined by the probability weights $(c_\eta)_{\eta\in \mathbb{N}^2}$,
\begin{align*}
c_\eta=\frac{d_\eta^*}{\sum_{\eta'\in \mathbb{N}^2} d_{\eta'}^*}.
\end{align*}
Set
\begin{align*}
\rho_0^{1,1}=0,\,\,\rho_1^{1,1}=v,\,\,\rho_2^{1,1}=1,\\
\rho_0^{1,2}=0,\,\,\rho_1^{1,2}=v,\,\,\rho_2^{1,2}=v,\\
\rho_0^{2,1}=0,\,\,\rho_1^{2,1}=v,\,\,\rho_2^{2,1}=v,\\
\rho_0^{2,2}=0,\,\,\rho_1^{2,2}=v,\,\,\rho_2^{2,2}=1.
\end{align*}
Let $(z_1^1,z_1^2)$ and $(z_2^1,z_2^2)$ be two independent Gaussian random vectors with mean zero and covariance,
\begin{align*}
\e z_a^\ell  z_a^{\ell '}=\xi'(\rho_{a}^{\ell ,\ell '})-\xi'(\rho_{a-1}^{\ell ,\ell '})
\end{align*}
for $a=1,2$ and $\ell ,\ell '=1,2.$ Let $(z_{1,j_1}^1,z_{1,j_1}^2)_{j_1\in \mathbb{N}}$ and $(z_{i,1,j_1}^1,z_{i,1,j_1}^2)_{j_1\in \mathbb{N}}$ for $1\leq i\leq N$ be i.i.d. copies of $(z_1^1,z_1^2).$
Also, let $(z_{2,j_1,j_2}^1,z_{2,j_1,j_2}^2)_{(j_1,j_2)\in \mathbb{N}^2}$ and $(z_{i,2,j_1,j_2}^1,z_{i,2,j_1,j_2}^2)_{(j_1,j_2)\in \mathbb{N}^2}$ for $1\leq i\leq N$ be i.i.d. copies of $(z_2^1,z_2^2)$. We assume that these are all independent of each other. For $(\sigma^1,\sigma^2,\eta)\in \Sigma_N^2\times\mathbb{N}^2,$ set
\begin{align*}
H_{N}^0\big(\sigma^1,\sigma^2,\eta\big)&=\sum_{\ell =1,2}\sum_{i=1}^N\sigma_i^\ell \bigl(z_{i,1,j_1}^\ell +z_{i,2,j_1,j_2}^\ell \bigr).
\end{align*} 
Note that for any two $\bigl(\sigma^1(1),\sigma^2(1),\eta(1)\bigr),\bigl(\sigma^1(2),\sigma^2(2),\eta(2)\bigr)\in \Sigma_N^2\times \mathbb{N}^2,$ the covariance of this Hamiltonian can be computed as
\begin{align}\label{ham0}
\e H_{N}^0\big(\sigma^1(1),\sigma^2(1),\eta(1)\big)H_{N}^0\big(\sigma^1(2),\sigma^2(2),\eta(2)\bigr)=\sum_{\ell,\ell'=1,2}Q_{1,2}^{\ell,\ell'}\xi'\bigl(\rho_{\eta(1)\wedge \eta(2)}^{\ell,\ell'}\bigr),
\end{align}
where for $\eta(1)=(j_1(1),j_2(1))$ and $\eta(2)=(j_1(2), j_2(2)),$ 
\begin{align*}
\eta(1)\wedge \eta(2)&=\left\{
\begin{array}{ll}
0,&\mbox{if $j_1(1)\neq j_1(2)$ and $j_2(1)\neq j_2(2)$},\\
1,&\mbox{if $j_1(1)= j_1(2)$ and $j_2(1)\neq j_2(2)$},\\
2,&\mbox{if $j_1(1)= j_1(2)$ and $j_2(1)= j_2(2)$}.
\end{array}
\right.
\end{align*}

Let $1/4<\chi<1/2$ be a fixed constant. For $t\in[0,1]$, define the interpolating Hamiltonian with perturbation by
\begin{align*}
H_{N}^{t,\theta}\bigl(\sigma^1,\sigma^2,\eta\bigr)&=\frac{\sqrt{t}}{\sqrt{2}}\bigl(H_N\bigl(\sigma^1\bigr)+H_N\bigl(\sigma^2\bigr)\bigr)\\
&+\sqrt{1-t}H_{N}^0\bigl(\sigma^1,\sigma^2,\eta\bigr)+N^{\chi}X_{N}^{\theta}\bigl(\sigma^1,\sigma^2\bigr).
\end{align*}
Note that $H_N$ and $H_N^0$ are of order $\sqrt{N}$ and $X_{N,\theta}$ is of order $1$, so $N^{\chi}X_{N}^{\theta}$ should be regarded as a vanishing perturbation. 
Let $\varepsilon>0$ be fixed. Set
\begin{align*}
\Lambda_N(\varepsilon)&:=\bigl\{\bigl(\sigma^1,\sigma^2\bigr)\in \Sigma_N^2\times \mathbb{N}^2:R_{1,2}\in [v-\varepsilon,v+\varepsilon]\bigr\}.
\end{align*}
Define the interpolating free energy as
\begin{align*}
%\label{ham}
CF_{N}^{t,\theta}(\beta)&=\frac{1}{N}\log \sum_{\Lambda_N(\varepsilon)}\frac{c_\eta}{2^{2N}}\exp \beta H_{N}^{t,\theta}\bigl(\sigma^1,\sigma^2,\eta\bigr)
\end{align*}
and the Gibbs measure by $$G_{N}^{t,\theta}(\sigma^1,\sigma^2,\eta)=\frac{1}{Z_{N}^{t,\theta}}\cdot\frac{c_\eta}{2^{2N}}\exp\beta H_{N}^{t,\theta}(\sigma^1,\sigma^2,\eta)$$
for $(\sigma^1,\sigma^2,\eta)\in \Lambda_N(\varepsilon)$, where $Z_{N}^{t,\theta}$ is the normalizing constant. From now on, we denote $$
(\sigma^1(k),\sigma^2(k),\eta(k))_{k\geq 1}$$ as i.i.d. samplings from this Gibbs measure and the Gibbs average with respect to this sequence is denoted by $\la \cdot\ra_{N}^{t,\theta}.$ For $t\in[0,1]$, define
\begin{align*}
\psi_{N}(t)=\e_\theta\e CF_{N}^{t,\theta}(\beta),
\end{align*}
where the expectation $\e$ is with respect to the Gaussian randomness and the Ruelle probability cascades, while $\e_\theta$ is respect to the uniform random variable $\theta$ only. Now following the proof of Lemma 15.7.2 in \cite{Talbook2} using Gaussian integration by parts and \eqref{ham0} gives
\begin{align}
\begin{split}\label{derivative}
\psi_{N}'(t)&=-\frac{\beta^2}{2}\e_\theta\Pi_{N}^{t,\theta}-\frac{\beta^2}{2}\e_\theta E_{N}^{t,\theta}+O(\varepsilon)
\end{split}
\end{align}
for 
\begin{align*}
\Pi_{N}^{t,\theta}&:=\sum_{\ell,\ell'=1,2}\Bigl(\theta(\rho_{2}^{\ell ,\ell '})-\e\bigl\la \theta(\rho_{\eta(1)\wedge\eta(2) }^{\ell  ,\ell  '})\bigr\ra_{N}^{t,\theta}\Bigr),\\
E_{N}^{t,\theta}&:=\sum_{\ell  ,\ell  '=1,2}\e\bigl\la \xi (Q_{1,2}^{\ell ,\ell '})-Q_{1,2}^{\ell ,\ell '}\xi'(\rho_{\eta(1)\wedge\eta(2)  }^{\ell  ,\ell  '})+\theta(\rho_{\eta(1)\wedge\eta(2)  }^{\ell  ,\ell  '})\bigr\ra_{N}^{t,\theta}
\end{align*}
Here $O(\varepsilon)$ means that $|O(\varepsilon)|\leq C\varepsilon$ for some constant $C>0$ depending only on $\xi.$
To handle $\Pi_{N}^{t,\theta}$ and $E_{N}^{t,\theta}$, we recall a crucial consequence of the Ruelle probability cascades that from Proposition~14.3.3. \cite{Talbook2}, for any $\theta$ and $t,$
\begin{align}\label{odd:eq2}
\e \bigl\la I\bigl(\eta(1)\wedge \eta(2)=a\bigr)\bigr\ra_{N}^{t,\theta}&=m_{a+1}-m_a,\,\,a=0,1,2.
\end{align}
From this,
\begin{align}
\begin{split}\notag
\Pi_{N}^{t,\theta}
%&=\sum_{\ell,\ell'=1,2}\theta(\rho_2^{\ell,\ell'})-\sum_{\ell ,\ell '=1,2}\e\bigl\la \theta(\rho_{\eta(1)\wedge \eta(2)}^{\ell ,\ell '})\bigr\ra_{N}^{t,\theta}\\
&=\sum_{\ell ,\ell '=1,2}\Bigl(\theta(\rho_{2}^{\ell ,\ell '})-\sum_{a=0}^2\theta(\rho_a^{\ell ,\ell '})(m_{a+1}-m_a)\Bigr)\\
&=\sum_{\ell ,\ell '=1,2}\sum_{a=1}^2m_p\bigl(\theta(\rho_a^{\ell ,\ell '})-\theta(\rho_{a-1}^{\ell ,\ell '})\bigr)
\end{split}\\
\begin{split}\label{odd:eq0}
&=2\Bigl(2m_1\bigl(\theta(v)-\theta(0)\bigr)+m_2\bigl(\theta(1)-\theta(v)\bigr)\Bigr).
\end{split}
\end{align}
The control of $E_{N}^{t,\theta}$ is handled in the following two steps. 

\smallskip

{\noindent \bf Step 3: Positive semi-definiteness of $Q_{1,2}$.} We proceed to argue that the overlap matrix $Q_{1,2}$ is asymptotically symmetric and positive semi-definite. Recall the index $\omega$ from \eqref{index} and the covariance of $X_N^\omega$ from \eqref{add:eq1}. Set 
\begin{align}\label{add:eq2}
C_{k,k'}^{\omega}&=\prod_{j=1}^m\bigl\la Q_{k,k'}^{\circ r}\lambda_{l_j,l_j'},\lambda_{l_j,l_j'}\bigr\ra^{n_j}.
\end{align}
For any $n\geq 2,$ let $\mathcal{F}_n$ be the collection of all measurable functions $f$ of $(Q_{k,k'})_{1\leq k\neq k'\leq n}$ with $|f|\leq 1.$ From Theorem 3.2. \cite{Pan}, it is known that $C_{k,k'}^\omega$ satisfies the so-called Ghirlanda-Guerra identities in the average sense, i.e., for any $\omega,$
\begin{align}\label{ggi}
\lim_{N\rightarrow\infty}\e_\theta \Delta_{N}^{t,\theta,\omega}=0,
\end{align}
where 
\begin{align*}
%\label{add:eq3}
\Delta_{N}^{t,\theta,\omega}:=\sup_{n\geq 2}\sup_{f\in \mathcal{F}_n}\Bigl|\e \bigl\la fC_{1,n+1}^{\omega}\bigr\ra_{N}^{t,\theta}-\frac{1}{n}\e \bigl\la f\bigr\ra_{N}^{t,\theta}\cdot \e \bigr\la C_{1,2}^{\omega}\bigr\ra_{N}^{t,\theta}-\frac{1}{n}\sum_{k=2}^n\e \bigl\la f C_{1,k}^{\omega}\bigr\ra_{N}^{t,\theta}\Bigr|.
\end{align*}
Set 
\begin{align*}
\bar\Delta_{N}^{t,\theta}=\sum_{\omega}\frac{1}{2^{|\omega|}}\Delta_{N}^{t,\theta,\omega}.
\end{align*}
From \eqref{ggi},
\begin{align*}
\lim_{N\rightarrow\infty}\int_0^1\e_\theta\bar\Delta_{N}^{t,\theta}dt=0.
\end{align*}
In other words, $\bar\Delta_{N}^{t,\theta}$ converges to zero in $L^1$ with respect to $d\p_\theta dt.$ From this,  we can pass to a subsequence $(N_{j})_{j\geq 1}$ such that $\bar\Delta_{N_{j}}^{t,\theta}$ converges to zero $d\p_\theta dt$-a.s. Let $(t,\theta)$ a realization such that this limit holds. Note that $(Q_{k,k'})_{k,k'\geq 1}$ takes value in $[-1,1]^{ \mathbb{N}\times \mathbb{N}}$, which is a compact space with respect to the metric $$
d(q,q')=\sum_{k,k'\geq 1}2^{-(k+k')}|q_{k,k'}-q_{k,k'}'|
$$
for $q=(q_{k,k'})_{k,k'\geq 1}$ and $q'=(q_{k,k'}')_{k,k'\geq 1}$. Thus, $\e\bigl\la I\bigl( (Q_{k,k'})_{k,k'\geq 1}\in \cdot\bigr)\bigr\ra_{N_j}^{t,\theta}$ for $j\geq 1$ is a tight sequence of  probability measures on the measurable space $\bigl([-1,1]^{\mathbb{N}\times\mathbb{N}},\mathscr{B}\bigl([-1,1]^{\mathbb{N}\times\mathbb{N}}\bigr)\bigr)$. As a result, we can pass to a subsequence $(N_{j_l})_{l\geq 1}$ such that the overlap matrix $(Q_{k,k'})_{k,k'\geq 1}$ under $\e \la \cdot\ra_{N_{j_l}}^{t,\theta}$ converges  in distribution. We use $(Q_{k,k'}^{t,\theta})_{k,k'\geq 1}$ to denote this limit and set $C_{k,k'}^{t,\theta,\omega}$ via \eqref{add:eq2}. Denote $Q_n^{t,\theta}=(Q_{k,k'}^{t,\theta})_{k,k'\leq n}.$ From the convergence of $\bar\Delta_{N_{j_l}}^{t,\theta}$ and this weak convergence, we obtain that for any $n\geq 1$, index $\omega$, and any bounded measurable function $f$ depending on $Q_n^{t,\theta}$, 
\begin{align*}
\e f(Q_n^{t,\theta})C_{1,n+1}^{t,\theta,\omega}=\frac{1}{n}\e f(Q_n^{t,\theta})\e C_{1,2}^{t,\theta,\omega}+\frac{1}{n}\sum_{k=2}^n\e f(Q_n^{t,\theta}) C_{1,k}^{t,\theta,\omega}.
\end{align*}
One of the important consequences of these identities is that the overlap matrix $Q_{1,2}^{t,\theta}$ satisfies

\begin{theorem}[Theorem 4 \cite{Panchenko2015}] \label{add:thm1}
	For almost surely $\p$, $Q_{1,2}^{t,\theta}$ is symmetric positive semi-definite.
\end{theorem}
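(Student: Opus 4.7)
The plan is to deduce the claim from Panchenko's synchronization mechanism for coupled spin systems, which converts the scalar Ghirlanda--Guerra identities \eqref{ggi} into structural rigidity on the limiting coupled overlap array. The perturbation $N^\chi X_N^\theta$ was engineered with exactly this in mind: running over every Hadamard power $r$, every depth $m,n_1,\ldots,n_m$, and every rational direction vector $\lambda_{l,l'}$, the family $(C^{\omega}_{k,k'})_\omega$ is rich enough to separate the joint law of the four entries of the $2\times 2$ matrix $Q^{t,\theta}_{k,k'}$.

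First I would upgrade the scalar identities established above to a matrix-valued form. Because $(\lambda_l)$ is dense in $[-1,1]$, the monomials $\la Q_{k,k'}^{\circ r}\lambda_{l,l'},\lambda_{l,l'}\ra^n$ are dense in polynomials of the entries of $Q_{k,k'}^{t,\theta}$; combined with the uniform bound $|C^\omega_{k,k'}|\le 4^{n_1+\cdots+n_m}$ and Stone--Weierstrass approximation, the limits already extracted pass through to give
\begin{align*}
\e f(Q_n^{t,\theta})\psi(Q^{t,\theta}_{1,n+1})=\frac{1}{n}\e f(Q_n^{t,\theta})\,\e\psi(Q^{t,\theta}_{1,2})+\frac{1}{n}\sum_{k=2}^{n}\e f(Q_n^{t,\theta})\psi(Q^{t,\theta}_{1,k})
\end{align*}
for every bounded measurable $f$ of $Q_n^{t,\theta}$ and every bounded continuous $\psi$ on $[-1,1]^{2\times 2}$. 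This is exactly the matrix-valued Ghirlanda--Guerra hypothesis that feeds into Panchenko's synchronization theorem \cite{PanMultipSP,Panchenko2015}.

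Second, I would invoke that theorem, which states that any overlap array with values in $2\times 2$ matrices satisfying the displayed matrix-valued identities must in the limit satisfy $Q^{1,2,t,\theta}_{k,k'}=Q^{2,1,t,\theta}_{k,k'}$ and must admit a Gram-matrix representation in a Hilbert space living on the ultrametric tree of the Ruelle probability cascades. Both symmetry and positive semi-definiteness of $Q_{1,2}^{t,\theta}$ are immediate consequences of such a representation: a Gram matrix of two pairs of vectors in a Hilbert space is automatically symmetric positive semi-definite.

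The main obstacle is the synchronization step itself, i.e.\ proving that the scalar identities for the whole family $(C^\omega_{k,k'})$ force the full $2\times 2$ overlap array into Gram form. This is the substantive content of \cite{Panchenko2015} and explains why the perturbation Hamiltonian $X_N^\theta$ must be built with so many layers of generality (Hadamard powers, nested depths $m,n_1,\ldots,n_m$, and rational directions $\lambda_{l,l'}$): without any of these layers some structural degree of freedom in the entries of $Q^{t,\theta}_{k,k'}$ would remain unconstrained by the identities. I would invoke Panchenko's synchronization theorem as a black box rather than reconstruct its proof here.
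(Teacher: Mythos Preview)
Your proposal is correct and aligns with the paper's treatment: the paper does not prove this statement at all but simply imports it verbatim as Theorem~4 of \cite{Panchenko2015}, having set up the perturbation $X_N^\theta$ and verified the Ghirlanda--Guerra identities \eqref{ggi} precisely so that Panchenko's result applies. Your sketch of the underlying mechanism (density of the rational directions, Stone--Weierstrass passage from the $C^\omega$ family to arbitrary continuous $\psi$, and the synchronization/Gram-representation consequence of the matrix-valued identities) is an accurate account of why the citation is legitimate, but it goes beyond what the paper itself provides; both you and the paper ultimately treat the synchronization step as a black box from \cite{Panchenko2015}.
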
 

For any $\delta\geq 0,$ set $A_{\delta}$ to be the collection of all matrix $x=(x^{\ell,\ell'})_{\ell,\ell'=1,2}$ satisfying 
\begin{align*}
&x^{1,1}\geq -\delta,\\
&x^{2,2}\geq -\delta,\\
&|x^{1,2}-x^{2,1}|\leq\delta,\\
&x^{1,1}x^{2,2}\geq x^{1,2}x^{2,1}-\delta.
\end{align*}
Note that if $x$ is a symmetric positive semi-definite matrix, then $x\in A_\delta.$ 
From Theorem \ref{add:thm1}, 
\begin{align*}
\lim_{l\rightarrow\infty}\e \bigl\la I\bigl(Q_{1,2}\in A_\delta\bigr)\bigr\ra_{N_{j_l}}^{t,\theta}=1,\,\,\forall \delta>0.
\end{align*}
Observe that this limit holds independent of the choice of the subsequence $(N_{j_l})_{l\geq 1}$ of $(N_j)_{j\geq 1}$ such that $(Q_{k,k'})_{k,k'\geq 1}$ converges weakly under $\e\la \cdot\ra_{N_{j_l}}^{t,\theta}$. Therefore, from the tightness of the sequence $\e\bigl\la I\bigl( (Q_{k,k'})_{k,k'\geq 1}\in \cdot\bigr)\bigr\ra_{N_j}^{t,\theta}$ for $j\geq 1$, we conclude that for any $\delta>0,$
\begin{align*}
\lim_{j\rightarrow\infty}\e\bigl\la I\bigl(Q_{1,2}\in A_\delta\bigr)\bigr\ra_{N_{j}}^{t,\theta}=1.
\end{align*}
Since this holds a.s. under the measure $d\p_\theta dt$, the dominated convergence theorem implies that
\begin{align}\label{add:eq4}
\lim_{j\rightarrow\infty}\int_0^1\e_\theta\e\bigl\la I\bigl(Q_{1,2}\in A_\delta\bigr)\bigr\ra_{N_{j}}^{t,\theta}dt=1,\,\,\forall \delta>0.
\end{align}

\smallskip

{\noindent \bf Step 4: Control of the error term.} Using Step 3, we now control $E_{N}^{t,\theta}$. Recall $\Gamma$ from Lemma~\ref{odd:lem1}. Note that
\begin{align*}
\xi \bigl(Q_{1,2}^{\ell,\ell'}\bigr)-Q_{1,2}^{\ell,\ell'}\xi'(\rho_{\eta(1)\wedge\eta(2)}^{\ell,\ell'})+\theta(\rho_{\eta(1)\wedge\eta(2)}^{\ell,\ell'})=\frac{1}{2}\Gamma\bigl(\rho_{\eta(1)\wedge\eta(2)}^{\ell,\ell'},Q_{1,2}^{\ell,\ell'}\bigr).
\end{align*}
Write
\begin{align*}
&E_{N}^{t,\theta}=\frac{1}{2}\bigl(E_{N,1}^{t,\theta}+E_{N,2}^{t,\theta}+E_{N,3}^{t,\theta}\bigr)
\end{align*}
for
\begin{align*}
E_{N,1}^{t,\theta}&:=\e\Bigl\la I\bigl(\eta(1)\wedge\eta(2)=0\bigr)\sum_{\ell,\ell'=1,2}\Gamma\bigl(0,Q_{1,2}^{\ell,\ell'}\bigr)\Bigr\ra_{N}^{t,\theta},\\
E_{N,2}^{t,\theta}&:= \e\Bigl\la I\bigl(\eta(1)\wedge\eta(2)=1\bigr)\sum_{\ell,\ell'=1,2}\Gamma\bigl(v,Q_{1,2}^{\ell,\ell'}\bigr)\Bigr\ra_{N}^{t,\theta},\\
E_{N,3}^{t,\theta}&:= \e\Bigl\la I\bigl(\eta(1)\wedge\eta(2)=2\bigr)\sum_{\ell,\ell'=1,2}\Gamma\bigl(\rho_{2}^{\ell,\ell'},Q_{1,2}^{\ell,\ell'}\bigr)\Bigr\ra_{N}^{t,\theta}.
\end{align*}
From the asymptotic symmetry and positive semi-definiteness \eqref{add:eq4} and Fatou's lemma, for any $\delta>0,$
\begin{align*}
&\liminf_{j\rightarrow\infty}\int_0^1 \e_\theta E_{N_j,1}^{t,\theta}dt\\
&=\liminf_{j\rightarrow\infty}\int_0^1\e_\theta\e\Bigl\la I\bigl(\eta(1)\wedge\eta(2)=0\bigr)\sum_{\ell,\ell'=1,2}\Gamma\bigl(0,Q_{1,2}^{\ell,\ell'}\bigr);Q_{1,2}\in A_\delta\bigr)\Bigr\ra_{N}^{t,\theta}dt\\
&\geq \int_0^1\liminf_{j\rightarrow\infty}\e_\theta\e\Bigl\la I\bigl(\eta(1)\wedge\eta(2)=0\bigr)\sum_{\ell,\ell'=1,2}\Gamma\bigl(0,Q_{1,2}^{\ell,\ell'}\bigr);Q_{1,2}\in A_\delta\Bigr\ra_{N}^{t,\theta}dt\\
&\geq\int_0^1\liminf_{j\rightarrow\infty}\Bigl(C_\delta\e_\theta\e\bigl\la I\bigl(\eta(1)\wedge\eta(2)=0\bigr)\bigr\ra_{N}^{t,\theta}\Bigr)dt,
\end{align*}
where $$
C_\delta:=\inf\Bigl\{\sum_{\ell,\ell'=1,2}\Gamma(0,x^{\ell,\ell'}):(x^{\ell,\ell'})_{\ell,\ell'=1,2}\in A_\delta\Bigr\}.
$$
Since the distance between $A_\delta$ and $A_0$ converges to zero as $\delta\downarrow 0$ and $\Gamma(0,x)$ is continuous on $[-1,1]$, these imply that $
\lim_{\delta \downarrow 0}C_\delta=C_0\geq 0$ by Lemma \ref{odd:lem1}. Thus, 
$$\liminf_{j\rightarrow\infty}\int_0^1 \e_\theta E_{N_j,1}^{t,\theta}dt\geq 0.$$
The same reasoning also gives the same inequality for $E_{N_j,2}^{t,\theta}.$
As for $E_{N_j,3}^{t,\theta}$, note that since $\rho_{2}^{\ell,\ell'}$ are not all the same, we can not use Lemma \ref{odd:lem1} to control $E_{N_j,3}^{t,\theta}.$ Nevertheless, from \eqref{odd:eq2}, this term is bounded above by
\begin{align*}
|E_{N_j,3}^{t,\theta}|&\leq 4p \e\bigl\la I\bigl(\eta(1)\wedge\eta(2)=0\bigr)\bigr\ra_{N_j}^{t,\theta}= 4p(m_3-m_2)=4p(1-m_2).
\end{align*}
In conclusion, we arrive at
\begin{align}\label{error}
\liminf_{j\rightarrow\infty}\int_0^1\e_\theta E_{N_j}^{t,\theta}dt\geq -4p(1-m_2).
\end{align}

\smallskip

\begin{proof}[\bf Proof of Proposition \ref{odd:lem2}] 	Note that adding a Gaussian perturbation $X_{N}^{\theta}$ increases the coupled free energy $\e CF_{N,v,\varepsilon}(\beta)$ (see, e.g., \cite[Lemma 12.2.1.]{Talbook2}), so
	\begin{align}\label{add:eq5}
	\e CF_{N,v,\varepsilon}(\beta)&\leq \psi_{N}(1).
	\end{align}
	In addition, since the variance of $X_{N}^{\theta}$ is bounded above by a constant $D>0$ independent of $N,$ it follows by releasing the overlap constraint and applying Jensen's inequality that
	\begin{align*}
	%\begin{split}\notag
	\psi_{N}(0)
	&\leq \frac{1}{N}\e\log \sum_{(\sigma^1,\sigma^2,\eta)\in \Sigma_N^2\times \mathbb{N}^2}\frac{c_\eta}{2^{2N}}\exp \beta H_{N}^0\bigl(\sigma^1,\sigma^2,\eta\bigr)+\frac{\beta^2D}{2N^{1-\chi}}\\
	&=\frac{1}{N}\e\log \sum_{\eta\in \mathbb{N}^2}c_\eta\cosh^N\beta\bigl(z_{1,j_1}^1+z_{2,j_1,j_2}^1\bigr)\cosh^N\beta\bigl(z_{1,j_1}^2+z_{2,j_1,j_2}^2\bigr)+\frac{\beta^2D}{2N^{1-\chi}}\\
	%\end{split}\\
	%\begin{split}\label{add:eq6}
	&=\frac{1}{m_1}\log \e_1\exp\Bigl( \frac{2m_1}{m_2}\log \e_2\exp \Bigl(m_2\log\cosh \beta\bigl(z^1+z^2\bigr)\Bigr)\Bigr)+\frac{\beta^2D}{2N^{1-\chi}},
	%\end{split}
	\end{align*}
	where $\e_1$ and $\e_2$ are the expectations with respect to independent
	\begin{align*}
	z^1\thicksim N\bigl(0,\xi'(v)-\xi'(0)\bigr),\\
	z^2\thicksim N\bigl(0,\xi'(1)-\xi'(v)\bigr),
	\end{align*}
	respectively. Here the second equality of the above inequality used an identity for the coupled free energy, weighted by the Ruelle probability cascades, from \cite[Proposition 14.2.2]{Talbook2}. From  \eqref{derivative} and the above inequality,
	\begin{align*}
	\psi_{N}(1)&=\psi_{N}(0)+\int_0^1\psi_{N}(t)dt\\
	&\leq \frac{1}{m_1}\log \e_1\exp \Bigl(\frac{2m_1}{m_2}\log \e_2\exp\Bigl( m_2\log\cosh \beta\bigl(z^1+z^2\bigr)\Bigr)\Bigr)\\
	&-\frac{\beta^2}{2}\int_0^1\e_\theta\Pi_{N}^{t,\theta}dt-\frac{\beta^2}{2}\int_0^1\e_\theta E_{N}^{t,\theta}dt+\frac{\beta^2D}{2N^{1-\chi}}+O(\varepsilon).
	\end{align*}
	Let $(N_j')_{j\geq 1}$ be any sequence with $N_j'\uparrow \infty$ as $j\rightarrow \infty$ such that $$
	\lim_{j\rightarrow\infty}\e CF_{N_{j}',v,\varepsilon}(\beta)=\limsup_{N\rightarrow\infty}\e CF_{N,v,\varepsilon}(\beta).
	$$
	In view of the argument of Steps 3 and 4, we can pass to a subsequence $(N_{j_l}')_{l\geq 1}$ of $(N_j')_{j\geq 1}$ such that the conclusion \eqref{error} holds for this subsequence, i.e.,
	\begin{align*}
	\liminf_{l\rightarrow\infty}\int_0^1\e_\theta E_{N_{j_l}'}^{t,\theta}dt\geq -4p(1-m_2).
	\end{align*}
	As a result, from this, \eqref{odd:eq0}, and \eqref{add:eq5},
	\begin{align*}
	\limsup_{N\rightarrow\infty}\e CF_{N,v,\varepsilon}(\beta)&=\lim_{l\rightarrow\infty}\e CF_{N_{j_l}',v,\varepsilon}(\beta)\\
	&\leq \frac{1}{m_1}\log \e_1\exp\Bigl( \frac{2m_1}{m_2}\log \e_2\exp \Bigl(m_2\log\cosh \beta\bigl(z^1+z^2\bigr)\Bigr)\Bigr)\\
	&\quad-\frac{\beta^2}{2}\Bigl(2m_1\bigl(\theta(v)-\theta(0)\bigr)+m_2\bigl(\theta(1)-\theta(v)\bigr)\Bigr)+O(\varepsilon)+2\beta^2p(1-m_2).
	\end{align*}
	Since this inequality holds for any $\varepsilon>0$ and $0<m_2< 1,$ sending $\varepsilon \downarrow 0$ and $m_2\uparrow 1$ leads to
	\begin{align*}
	\lim_{\varepsilon\downarrow 0}\limsup_{N\rightarrow\infty}\e CF_{N,v,\varepsilon}(\beta)&\leq \frac{1}{m_1}\log \e_1\exp\Bigl( 2m_1\log \e_2\cosh \beta\bigl(z^1+z^2\bigr)\Bigr)\\
	&\quad-\beta^2\Bigl(2m_1\bigl(\theta(v)-\theta(0)\bigr)+\bigl(\theta(1)-\theta(v)\bigr)\Bigr).
	\end{align*}
	By directly solving the PDE \eqref{pde} via Cole-Hopf transformation for $\gamma\in \mathcal{N}_v$ satisfying $\gamma\equiv 2m_1$, the first term of the right-hand side of the above inequality can  be written as $2\Psi_\gamma(0,0)$ and the two terms together gives $\mathcal{L}_v(\gamma)$. This completes our proof.
\end{proof}

\begin{remark}\label{rmk4}
	\rm As we have seen from the above proof, the most difficult part is to show that the error term $\int_0^1\e_\theta E_{N_j}^{t,\theta}dt$ has a nonnegative sign. In the case that $p$ is even, one does not need to add perturbation in order to achieve this. In fact, since $\xi$ is convex on $[-1,1]$ for even $p$, it is clear that $\xi(x)-y\xi'(x)+\theta(y)\geq 0$ for all $x,y\in[-1,1]$. This immediately deduces  $E_{N}^{t,\theta}\geq 0$ and then the validity of the Guerra-Talagrand inequality.
\end{remark}

\appendix
\setcounter{secnumdepth}{0}
\section{Appendix}

This appendix is devoted to establishing the existence of the limits for the auxiliary free energy $AF_N(\beta)$ and the interpolating free energy $L_N(\beta)$ defined in \eqref{af} and \eqref{af2}, based on the approach in \cite{C11}, where the author obtained a variational representation for the limiting free energy of the mixed even $p$-spin model with Curie-Weiss interaction. 
Define the free energy of the pure $p$-spin model with external field by
\begin{align*}
F_N(\beta,x)&=\frac{1}{N}\log \sum_{\sigma\in \Sigma_N}\frac{1}{2^N}\exp\Bigl(\frac{\beta}{\sqrt{2}}H_N(\sigma)+xNm_N(\sigma)\Bigr),
\end{align*} 
where $$m_N(\sigma)=\frac{1}{N}\sum_{i=1}^Nh_i\sigma_i$$ is the magnetization of $\sigma$.
It is known that the above free energy converges to a nonrandom constant, which can be computed by the Parisi formula, see \cite{Pan}. Denote this limit by $F(\beta,x)$. We show that the limiting free energies of $AF_N$ and $L_N$ can be expressed in terms of $F(\beta,x).$

     \begin{proposition}\label{add:prop1}
	Let $p\geq 2$. For any $\beta>0$ and $x>0,$ the following two equations hold a.s.,
	\begin{align}
	\begin{split}\label{add:eq-6}
	AF(\beta)&:=\lim_{N\rightarrow\infty}AF_N(\beta)=\max_{m\in [0,1]}\Bigl(F\Bigl(\beta,\frac{\beta^2pm^{p-1}}{2}\Bigr)-\frac{\beta^2(p-1)m^p}{2}\Bigr),
	\end{split}
	\\
	\begin{split}\label{add:eq-11}
	L(x)&:=\lim_{N\rightarrow\infty}L_N(\beta)=\max_{m\in [0,1]}\Bigl(F\Bigl(\beta,\frac{\beta xpm^{p-1}}{2}\Bigr)-\frac{\beta x(p-1)m^p}{2}\Bigr).
	\end{split}
	\end{align}
\end{proposition}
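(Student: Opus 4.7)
The plan is to adapt the convex-duality argument of Chen \cite{C11} to the present (possibly odd) $p$-spin setting: exploit the tangent-line representation
\[
m^p \;=\; \max_{s\in[0,1]}\bigl\{p s^{p-1}m - (p-1)s^p\bigr\},\qquad m\in[-1,1],\ p\geq 2,
\]
to reduce $AF_N(\beta)$ and $L_N(x)$ to the standard Parisi free energy $F_N(\beta,\cdot)$ with external field. Since $|\partial_x F_N(\beta,x)| = |\la m_N(\sigma)\ra|\leq 1$ pathwise, $F_N(\beta,\cdot)$ is $1$-Lipschitz uniformly in $N$, so the a.s.\ pointwise convergence $F_N(\beta,x)\to F(\beta,x)$ given by the Parisi formula upgrades to uniform convergence on every compact interval of $x$. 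This uniformity, together with the sub-Gaussian concentration of $AF_N$ and $L_N$ around their means (cf.\ \cite[Proposition 9]{Chen14}), is what will turn in-mean estimates into the a.s.\ statements claimed in the proposition.

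For the lower bound, the global tangent inequality $m^p \geq p s^{p-1}m - (p-1)s^p$ (valid for $s\in[0,1]$ and $m\in[-1,1]$ when $p\geq 2$, by convexity of $t\mapsto t^p$ on $[0,\infty)$ and a direct check on $[-1,0]$) applied inside the Boltzmann weight with $m = m_N(\sigma)$ gives, for every $s\in[0,1]$,
\[
AF_N(\beta) \;\geq\; F_N\Bigl(\beta,\tfrac{\beta^2 p s^{p-1}}{2}\Bigr) - \tfrac{\beta^2(p-1)s^p}{2}.
\]
Passing to the limit and then taking $\sup_{s\in[0,1]}$ yields the matching lower bound. For the upper bound I would take an $\varepsilon$-mesh $\{s_j\}\subset[0,1]$, set $A_j := \{\sigma : m_N(\sigma)\in[s_j-\varepsilon,s_j+\varepsilon]\}$, and use the local Taylor bound $m_N^p \leq T_{s_j}(m_N) + C\varepsilon^2$ on $A_j$, where $T_s(m) := p s^{p-1}m - (p-1)s^p$. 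Substituting this into the Boltzmann weight, relaxing the constraint $\sigma\in A_j$ back to $\sigma\in\Sigma_N$ (the summand is nonnegative), and summing over $j$ would then bound the contribution of $\{m_N\geq 0\}$ by
\[
K_\varepsilon\cdot 2^N\exp\Bigl(CN\varepsilon^2 + N\max_{s_j}\Bigl[F_N\Bigl(\beta,\tfrac{\beta^2 p s_j^{p-1}}{2}\Bigr) - \tfrac{\beta^2(p-1)s_j^p}{2}\Bigr]\Bigr).
\]

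The contribution from $\{m_N(\sigma)<0\}$ is where the two parities of $p$ diverge and, I expect, is the main obstacle. When $p$ is odd, $m_N^p\leq 0$ on this set, so the Boltzmann weight is at most $\exp(\beta H_N(\sigma)/\sqrt{2})$ and the total is bounded by $2^N e^{N F_N(\beta,0)}$, which is precisely the $s=0$ term in the max above. When $p$ is even, $m_N^p=|m_N|^p$, so the same $\varepsilon$-mesh argument applied to $|m_N|$, combined with the distributional identity $F_N(\beta,x)\stackrel{d}{=}F_N(\beta,-x)$ coming from the symmetry $h\mapsto -h$, produces the same bound. Adding the two parts, invoking the uniform convergence of $F_N(\beta,\cdot)$, and finally sending $N\to\infty$ and $\varepsilon\downarrow 0$ gives the matching upper bound. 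The argument for $L_N(x)$ is structurally identical, with $\beta^2/2$ replaced by $\beta x/2$ throughout. The reason the separation-by-sign step is essential (and not cosmetic) for odd $p$ is that a blind partition of $[-1,1]$ would produce a supremum over $[-1,1]$ strictly exceeding that over $[0,1]$, since $s\mapsto s^p$ is then odd; exploiting that $m_N^p<0$ on $\{m_N<0\}$ is exactly what restores the formula stated in the proposition.
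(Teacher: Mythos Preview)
Your lower-bound step contains a genuine error for odd $p$. The claimed global tangent inequality
\[
m^p \;\geq\; ps^{p-1}m - (p-1)s^p,\qquad s\in[0,1],\ m\in[-1,1],
\]
is false when $p$ is odd, $m<0$, and $s$ is small: already for $p=3$, $s=0$, $m=-1$ it reads $-1\geq 0$, and for $p=3$, $s=0.3$, $m=-1$ it reads $-1\geq -0.324$. Hence you cannot compare $AF_N(\beta)$ directly with $F_N\bigl(\beta,\beta^2ps^{p-1}/2\bigr)$ over the full configuration space; the ``direct check on $[-1,0]$'' you invoke does not go through.

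The paper's remedy is to run the lower bound only on the set $\{m_N(\sigma)\in[0,1]\}$, where convexity of $t\mapsto t^p$ on $[0,1]$ does give the tangent inequality, obtaining
\[
AF_N(\beta)\;\geq\;AF_N(\beta,[0,1])\;\geq\;F_N\Bigl(\beta,\tfrac{\beta^2ps^{p-1}}{2},[0,1]\Bigr)-\tfrac{\beta^2(p-1)s^p}{2}.
\]
The non-trivial ingredient you are then missing is that the constrained free energy $F_N(\beta,x,[0,1])$ converges to the unconstrained limit $F(\beta,x)$ for $x>0$. The paper deduces this from the differentiability of the Parisi formula in the external field (yielding $\partial_xF(\beta,x)>0$ for $x>0$) together with the concentration of the magnetization $m_N(\sigma)$ around $\partial_xF(\beta,x)$ under the tilted Gibbs measure; the constraint $m_N\geq 0$ is therefore asymptotically vacuous. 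This step is essential and does not follow from the Lipschitz/uniform-convergence tools you list.

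Your upper-bound argument is essentially correct and close in spirit to the paper's, though the paper avoids the $\varepsilon$-mesh and Taylor remainder by partitioning on the exact lattice values $m_k=k/N$ and using that $m_N^p=T_{m_k}(m_N)$ holds with equality when $m_N=m_k$. Your treatment of the region $\{m_N<0\}$ for odd $p$---bounding by $F_N(\beta,0)$ via $m_N^p\leq 0$---is in fact cleaner than the paper's, which instead exploits concavity of $s\mapsto s^p$ on $[-1,0)$ to produce an upper bound $\Gamma_-$ and then notes $\Gamma_-\leq\Gamma_+$ through the common value at $m=0$.
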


Before we turn to the proof of Proposition \ref{prop1}, we recall a few properties of $F_N(\beta,x)$ and $F(\beta,x).$ First, one of the important consequences of the Parisi formula states that $F(\beta,x)$ is differentiable in $x\in \mathbb{R}$ and  $\partial_xF(\beta,x)>0$ for $x>0.$ See, for instance, \cite{PanMultipSP}. For any measurable set $A\subset[-1,1]$, set 
\begin{align*}
F_N(\beta,x,A)=\frac{1}{N}\log \sum_{m_N(\sigma)\in A}\frac{1}{2^N}\exp\Bigl(\frac{\beta}{\sqrt{2}}H_N(\sigma)+xNm_N(\sigma)\Bigr).
\end{align*}
From the above differentiability of $F(\beta,x)$ in $x$, it follows from \cite[Theorem 1]{ACshort} that for any $x>0,$ the magnetization $m_N(\sigma)$ is concentrated around $\partial_xF(\beta,x)>0$ under the free energy, i.e., for any $\varepsilon>0,$
\begin{align*}
\lim_{N\rightarrow\infty}F_N\bigl(\beta,x,\bigl(\partial_xF(\beta,x)-\varepsilon,\partial_xF(\beta,x)+\varepsilon\bigr)\bigr)=F(\beta,x).
\end{align*}
This evidently implies that
\begin{align}\label{add:eq-3}
\lim_{N\rightarrow\infty}F_N\bigl(\beta,x,[0,1]\bigr)=F(\beta,x).
\end{align}

\begin{proof}[\bf Proof of Proposition \ref{add:prop1}]
	For any measurable $A\subset[-1,1]$, define
	\begin{align*}
	AF_N(\beta,A)&=\frac{1}{N}\log \sum_{m_N(\sigma)\in A}\frac{1}{2^N}\exp\Bigl(\frac{\beta }{\sqrt{2}}H_N(\sigma)+\frac{\beta^2N}{2}m_N(\sigma)^p\Bigr).
	\end{align*}
    To prove \eqref{add:eq-11}, we first consider the case that $p$ is odd. 
	Since $s^p$ is convex on $[0,1]$, 
	\begin{align}\label{ineq}
	m_N(\sigma)^p\geq m^p +p m^{p-1}\bigl(m_N(\sigma)-m\bigr)=pm^{p-1}m_N(\sigma)-(p-1)m^p,
	\end{align}
	whenever $m,m_N(\sigma)\in[0,1]$. This inequality deduces that
	\begin{align*}
	&AF_N\bigl(\beta,[0,1]\bigr)\geq F_N\Bigl(\beta,\frac{\beta^2pm^{p-1}}{2},[0,1]\Bigr)-\frac{\beta^2(p-1)m^p}{2}
	\end{align*}
	for any $m\in [0,1]$, from which letting $N\rightarrow\infty$ and using  \eqref{add:eq-3} give that 
	\begin{align}\label{add:eq-7}
	&\limsup_{N\rightarrow\infty}AF_N\bigl(\beta,[0,1]\bigr)\geq  \max_{m\in [0,1]}\Bigl(F\Bigl(\beta,\frac{\beta^2pm^{p-1}}{2}\Bigr)-\frac{\beta^2(p-1)m^p}{2}\Bigr).
	\end{align}
	To obtain the upper bound, set $m_k=k/N$ for $0\leq k\leq N.$ Write
	\begin{align*}
	&AF_N\bigl(\beta,[0,1]\bigr)\\
	&=\frac{1}{N}\log\sum_{k=0}^N\sum_{m_N(\sigma)=m_k}\frac{1}{2^N}\exp \Bigl(\frac{\beta}{\sqrt{2}}H_N(\sigma)+\frac{\beta^2pm_k^{p-1}}{2} Nm_N(\sigma)-\frac{\beta^2(p-1)m_k^p}{2}\Bigr)\\
	&\leq\frac{1}{N}\log\sum_{k=0}^N\sum_{\sigma}\frac{1}{2^N}\exp \Bigl(\frac{\beta}{\sqrt{2}}H_N(\sigma)+\frac{\beta^2pm_k^{p-1}}{2} Nm_N(\sigma)-\frac{\beta^2(p-1)m_k^p}{2}\Bigr)\\
	&=\frac{1}{N}\log\sum_{k=0}^N\exp N\Bigl(F_N\Bigl(\beta,\frac{\beta^2pm_k^{p-1}}{2}\Bigr)-\frac{\beta^2(p-1)m_k^p}{2}\Bigr)\\
	&\leq \frac{\log (N+1)}{N}+\max_{m\in[0,1]}\Bigl(F_N\Bigl(\beta,\frac{\beta^2pm^{p-1}}{2}\Bigr)-\frac{\beta^2(p-1)m^p}{2}\Bigr).
	\end{align*}
	Note that $F_N(\beta,\cdot)$ is convex and converges to $F(\beta,\cdot)$ pointwise. It follows by the classical theory of convex functions \cite{convex} that this convergence must be uniform on compact intervals. As a result,
	\begin{align*}
	\liminf_{N\rightarrow\infty}AF_N\bigl(\beta,[0,1]\bigr)&\leq \max_{m\in [0,1]}\Bigl(F\Bigl(\beta,\frac{\beta^2pm^{p-1}}{2}\Bigr)-\frac{\beta^2(p-1)m^p}{2}\Bigr).
	\end{align*}
	This and \eqref{add:eq-7} together lead to
	\begin{align}\label{add:eq-4}
	\lim_{N\rightarrow\infty}AF_N\bigl(\beta,[0,1]\bigr)&=\max_{m\in [0,1]}\Bigl(F\Bigl(\beta,\frac{\beta^2pm^{p-1}}{2}\Bigr)-\frac{\beta^2(p-1)m^p}{2}\Bigr)\Bigr).
	\end{align}
	Next since $s^p$ is concave on $[-1,0)$, the inequality \eqref{ineq} is flipped,
	\begin{align*}
	m_N(\sigma)^p\leq pm^{p-1}m_N(\sigma)-(p-1)m^p
	\end{align*}
	whenever $m,m_N(\sigma)\in [-1,0)$. Thus, for any $m\in [-1,0),$
	\begin{align*}
	&AF_N\bigl(\beta,[-1,0)\bigr)\leq F_N\Bigl(\beta,\frac{\beta^2pm^{p-1}}{2}\Bigr)-\frac{\beta^2(p-1)m^p}{2}
	\end{align*}
	and then,
	\begin{align}\label{add:eq-5}
	&\limsup_{N\rightarrow\infty}AF_N\bigl(\beta,[-1,0)\bigr)\leq \min_{m\in[-1,0]}\Bigl( F\Bigl(\beta,\frac{\beta^2pm^{p-1}}{2}\Bigr)-\frac{\beta^2(p-1)m^p}{2}\Bigr).
	\end{align}
	Denote by
	\begin{align*}
	\Gamma_+&=\max_{m\in [0,1]}\Bigl(F\Bigl(\beta,\frac{\beta^2pm^{p-1}}{2}\Bigr)-\frac{\beta^2(p-1)m^p}{2}\Bigr),\\
	\Gamma_-&=\min_{m\in[-1,0]}\Bigl( F\Bigl(\beta,\frac{\beta^2pm^{p-1}}{2}\Bigr)-\frac{\beta^2(p-1)m^p}{2}\Bigr).
	\end{align*}
	Evidently $\Gamma_+\geq \Gamma_-.$
	From \eqref{add:eq-4} and \eqref{add:eq-5}, with probability one, for any $\varepsilon>0,$ as long as $N$ is large enough,
	\begin{align*}
	e^{N(\Gamma_+-\varepsilon)}&\leq e^{N AF_N(\beta,[0,1])}\leq e^{N(\Gamma_++\varepsilon)}
	\end{align*}
	and
     \begin{align*}
	 e^{N AF_N(\beta,[-1,0))}\leq e^{N(\Gamma_-+\varepsilon)}.
	\end{align*}
	Since $e^{NAF_N(\beta)}=e^{N AF_N(\beta,[-1,0))}+e^{N AF_N(\beta,[0,1])}$, we obtain that
	\begin{align*}
	e^{N(\Gamma_+-\varepsilon)}\leq e^{NAF_N(\beta)}\leq e^{N(\Gamma_++\varepsilon)}+e^{N(\Gamma_-+\varepsilon)}.
	\end{align*}
	Consequently, taking $N^{-1}\log$ and sending $N\rightarrow\infty$ and $\varepsilon\downarrow 0,$
	\begin{align*}
	\Gamma_+=AF(\beta)&=\max\bigl(\Gamma_+,\Gamma_-\bigr)=\Gamma_+,
	\end{align*}
	where the third equality used $\Gamma_-\leq \Gamma_+.$ This establishes \eqref{add:eq-6} for odd $p.$ As for the case of even $p$, since $s^p$ is convex on $[-1,1]$, this case is identical to that of \eqref{add:eq-4}. This gives \eqref{add:eq-6}. The proof for \eqref{add:eq-11} is the same as that for \eqref{add:eq-6} without essential changes. We omit the details here. 
\end{proof}

	\bibliography{ref}
	\bibliographystyle{plain}

\end{document}